\crefname{equation}{}{}
\Crefname{equation}{Equation}{Equations}
\def\csname ver@etex.sty\endcsname{3000/12/31}
\newcommand{\email}[1]{\href{#1}{#1}}
\pgfplotsset{compat=1.11}
\numberwithin{equation}{section}
\newtheorem{theorem}{Theorem}[section]
\newtheorem{lemma}[theorem]{Lemma}
\newtheorem{definition}[theorem]{Definition}
\newtheorem{remark}[theorem]{Remark}
\crefname{assumption}{Assumption}{Assumptions}
\crefname{remark}{Remark}{Remarks}
\crefname{example}{Example}{Examples}
\newcommand{\plotlinewidth}{1.0}
\newcommand{\plotdashedlinewidth}{0.5}
\newcommand{\plotmarksizeu}{2.5}
\newcommand{\plotmarksized}{3.0}
\newcommand{\plotimscale}{1}
\newcommand{\plotimsizeu}{0.44}
\newcommand{\plotimsized}{0.99} 
\newcommand{\aspectratio}{0.66}
\newcommand{\subfigsize}{0.478}
\newcommand{\subfigsizemed}{0.45}
\newcommand{\legendfontscale}{1}
\newcommand{\legendmarkscale}{0.9}
\newcommand{\legendboxdraw}{} 
\newcommand{\legendboxfill}{} 
\newcommand{\zoomsize}{0.4}
\newcommand{\zoomlinewidth}{2.5}
\newcommand{\zoomdashedlinewidth}{2}
\newcommand{\localcolor}{ForestGreen}
\newcommand{\classicalcolor}{WildStrawberry}
\newcommand{\localmark}{star}
\newcommand{\classicalmark}{o}
\newcommand{\localcolorF}{ForestGreen}
\newcommand{\localcolorS}{NavyBlue}
\newcommand{\localmarkF}{star}
\newcommand{\localmarkS}{triangle}
\newcommand{\colorone}{ForestGreen}
\newcommand{\colortwo}{NavyBlue}
\newcommand{\colorthree}{WildStrawberry}
\newcommand{\colorfour}{BurntOrange}
\newcommand{\markone}{star}
\newcommand{\marktwo}{triangle}
\newcommand{\markthree}{o}
\newcommand{\markfour}{square}
\pgfplotsset{select coords between index/.style 2 args={
    x filter/.code={
        \ifnum\coordindex<#1\fi
        \ifnum\coordindex>#2\fi
    }
}}
\newcommand{\Nb}{\mathbb{N}}
\newcommand{\Rb}{\mathbb{R}}
\newcommand{\Cb}{\mathbb{C}}
\newcommand{\Rn}{\mathbb{R}^{n}}
\newcommand{\bigo}[1]{\mathcal{O}(#1)}
\newcommand{\dt}{\partial_t}
\newcommand{\nld}[1]{\Vert #1\Vert}
\newcommand{\oz}{\omega_0}
\newcommand{\ou}{\omega_1}
\newcommand{\vz}{\upsilon_0}
\newcommand{\vu}{\upsilon_1}
\newcommand{\fs}{f_S}
\newcommand{\ff}{f_F}
\newcommand{\rhos}{\rho_S}
\newcommand{\rhof}{\rho_F}
\newcommand{\rhoe}{\rho_\eta}
\newcommand{\costs}{c_S}
\newcommand{\costf}{c_F}
\newcommand{\bx}{\bm{x}}
\newcommand{\fe}{f_{\eta}}
\newcommand{\bfe}{\overline{f}_\eta}
\newcommand{\ye}{y_\eta}
\renewcommand{\Pi}{P} 
\newcommand{\Pe}{R} 
\newcommand{\Ps}{\Phi} 
\newcommand{\OF}{\Omega_F}
\newcommand{\mRKCop}{\operatorname{mRKC}}
\newcommand{\RKCop}{\operatorname{RKC}}
\title{Explicit stabilized multirate method for stiff differential equations}
\author{Assyr Abdulle\thanks{ANMC, Institute of Mathematics, École Polytechnique Fédérale de Lausanne, 1015 Lausanne, Switzerland \email{giacomo.rosilhodesouza@usi.ch}}, ~Marcus J. Grote\thanks{Department of Mathematics and Computer Science, University of Basel, Spiegelgasse 1, 4051 Basel, Switzerland \email{marcus.grote@unibas.ch}\\ Our esteemed colleague, Assyr Abdulle, passed away on September 1, 2021 during the revision of this article. A wonderful mentor and friend, his enthusiasm for applied mathematics and for music will always remain dearly missed.}, ~Giacomo Rosilho de Souza$^*$ }
\date{}
\begin{document}

\maketitle

\begin{abstract}

Stabilized Runge--Kutta methods are especially efficient for the numerical solution of large systems of stiff nonlinear differential equations because they are fully explicit. 
For semi-discrete parabolic problems, for instance, stabilized Runge--Kutta methods overcome the stringent stability condition of standard methods without sacrificing explicitness. However, when stiffness is only induced by a few components, as in the presence of spatially local mesh refinement, their efficiency deteriorates. 
To remove the crippling effect of a few severely stiff components on the entire system of differential equations, we derive a modified equation, whose stiffness solely depend on the remaining mildly stiff components. By applying stabilized Runge--Kutta methods to this modified equation, we then devise an explicit multirate Runge--Kutta--Chebyshev (mRKC) method whose stability conditions are independent of a few severely stiff components.
Stability of the mRKC method is proved for a model problem, whereas its efficiency and usefulness are demonstrated through a series of numerical experiments.

\end{abstract}
\textbf{Key words.} stabilized Runge--Kutta methods, explicit time integrators, stiff equations, multirate methods, local time-stepping, parabolic problems, Chebyshev methods.\\
\textbf{AMS subject classifications.} 65L04, 65L06, 65L20.

\section{Introduction}\label{sec:intro}
We consider the system of stiff (nonlinear) differential equations,
\begin{equation}\label{eq:ode}
y' =f(y)\vcentcolon = \ff(y)+\fs(y), \qquad\qquad
y(0)=y_0,
\end{equation}
where $f:\Rn\rightarrow\Rn$ splits into an expensive but only mildly stiff part, $\fs$, associated with relatively slow (S) time-scales, and a cheap but severely stiff part, $\ff$, associated with fast (F) time-scales. 
Typical applications include chemical reactions and electrical circuits with disparate time-scales, but also 
spatial discretizations of diffusion dominated (parabolic) partial differential equations (PDEs) with local mesh refinement. Semi-discrete parabolic PDEs, in particular, lead to large systems of stiff ordinary differential equations, 
where the eigenvalues of the Jacobian matrix, $\partial f / \partial y$, lie in a narrow strip along the negative real axis whose extent scales as $H^{-2}$ for a mesh size $H$. In the presence of local mesh refinement,
$\fs,\ff$ in \cref{eq:ode} then correspond to discrete diffusion operators in the coarse and locally refined region of the mesh, respectively. Although $\ff$ involves only a small number of degrees of freedom, 
the extreme eigenvalue of its Jacobian will determine the spectral radius $\rho$ of $\partial f / \partial y$.

In contrast to multiscale methods \cite{AbP12,E03,EnT05,Van03}, 
{\it we do not assume any scale separation} in \cref{eq:ode}; hence, $\ff$ may contain {\it both fast and slow} scales. 
In a situation of local mesh refinement, for instance, when $\fs$ and $\ff$ each represent the discrete Laplacian in the coarse and refined regions, both Jacobians in fact contain small eigenvalues in magnitude; hence, the spectrum of $\partial f / \partial y$ cannot simply be split into fast and slow modes, as in \cref{fig:noscalesep}. This stands in sharp contrast to the underlying assumption of recently introduced multiscale methods for stiff (dissipative) ODEs, such as the  heterogeneous multiscale method (HMM) 
\cite{E03,EnT05} or  the projective method \cite{GIK03} which all require scale separation, as in \cref{fig:scalesep}. 

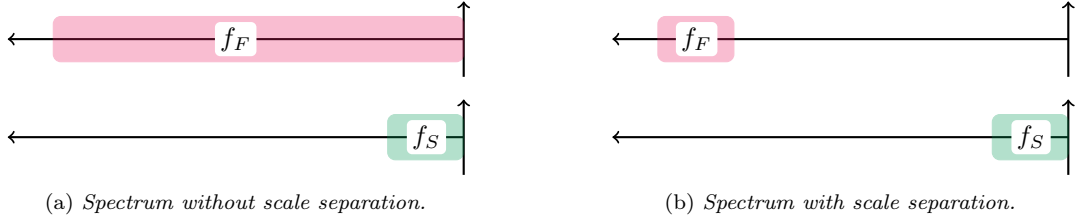
\begin{figure}
		\begin{subfigure}[t]{\subfigsize\textwidth}
			\centering
			\begin{tikzpicture}[scale=\plotimscale]
			\coordinate (a) at (0,0);
			\coordinate (b) at (6,0);
			\coordinate (c) at (6,0.5);
			\coordinate (d) at (6,-0.5);
			\coordinate (e) at (0,-1.3);
			\coordinate (f) at (6,-1.3);
			\coordinate (g) at (6,-0.8);
			\coordinate (h) at (6,-1.8);
			\coordinate (i) at (5,-1.3);
			\coordinate (l) at (3,0);
			\draw[thick,<-] (a)--(b);
			\draw[thick,<-] (c)--(d);
			\draw[thick,<-] (e)--(f);
			\draw[thick,<-] (g)--(h);
			\draw[\colorthree,fill=\colorthree,rounded corners=1mm,opacity=0.3] ([xshift=6mm,yshift=-3mm]a) rectangle ([yshift=3mm]b);
			\draw[\colorone,fill=\colorone,rounded corners=1mm,opacity=0.3] ([xshift=0mm,yshift=-3mm]i) rectangle ([yshift=3mm]f);
			\node[fill=white,rounded corners=2pt,inner sep=2pt] at ([xshift=0mm]l) {{$\ff$}};
			\node[fill=white,rounded corners=2pt,inner sep=2pt] at ([xshift=5.1mm]i) {{$\fs$}};
			\end{tikzpicture}
			\caption{Spectrum without scale separation.}
			\label{fig:noscalesep}
		\end{subfigure}\hfill%
		\begin{subfigure}[t]{\subfigsize\textwidth}
			\centering
			\begin{tikzpicture}[scale=\plotimscale]
			\coordinate (a) at (0,0);
			\coordinate (b) at (6,0);
			\coordinate (c) at (6,0.5);
			\coordinate (d) at (6,-0.5);
			\coordinate (e) at (0,-1.3);
			\coordinate (f) at (6,-1.3);
			\coordinate (g) at (6,-0.8);
			\coordinate (h) at (6,-1.8);
			\coordinate (i) at (5,-1.3);
			\coordinate (l) at (3,0);
			\draw[thick,<-] (a)--(b);
			\draw[thick,<-] (c)--(d);
			\draw[thick,<-] (e)--(f);
			\draw[thick,<-] (g)--(h);
			\draw[\colorthree,fill=\colorthree,rounded corners=1mm,opacity=0.3] ([xshift=6mm,yshift=-3mm]a) rectangle ([xshift=-14mm,yshift=3mm]l);
			\draw[\colorone,fill=\colorone,rounded corners=1mm,opacity=0.3] ([xshift=0mm,yshift=-3mm]i) rectangle ([yshift=3mm]f);
			\node[fill=white,rounded corners=2pt,inner sep=2pt] at ([xshift=-18.9mm]l) {{$\ff$}};
			\node[fill=white,rounded corners=2pt,inner sep=2pt] at ([xshift=5.1mm]i) {{$\fs$}};
			\end{tikzpicture}
			\caption{Spectrum with scale separation.}
			\label{fig:scalesep}
		\end{subfigure}%
	\caption{Stiff problems with identical spectral radii but different eigenvalues distribution.}
	\label{fig:spectrum}
\end{figure}

Standard explicit methods are notoriously inefficient for stiff differential equations due to their stringent stability constraint on the step size, $\tau$, which for parabolic problems must be proportional to $H^{2}$. Implicit methods, on the other hand, are unconditionally stable but require at every time step the solution of an $n \times n$ linear (or possibly nonlinear) system of equations, a high price to pay when $n$ is large. 
Moreover, when sheer size calls for using iterative methods, the overall performance heavily relies on the availability of efficient preconditioners while the convergence of Newton-like nonlinear iterations is not even guaranteed for larger step sizes.

{\it Stabilized Runge--Kutta (RK) (or Chebyshev) methods} fall somewhere between explicit and implicit methods: they are explicit and thus avoid the solution of large systems of equations, while their stability interval on the negative real axis is proportional to $s^2$ for an $s$-stage method. Thanks to this remarkable quadratic dependency, the work load (number of stages $s$) per time step only needs to scale linearly with $H^{-1}$ for parabolic PDEs, in contrast to the quadratic increase in the number of time steps required by standard explicit integrators. Stabilized RK methods are thus particularly efficient for the
time integration of large-scale, possibly nonlinear, parabolic PDEs \cite{DDD13}.
Several stabilized RK methods have been proposed in the literature, such as DUMKA methods, based on the composition of Euler steps \cite{Leb94,LeM94,Med98}, Runge--Kutta--Chebyshev (RKC) methods, based on the linear combination of Chebyshev polynomials \cite{SSV98,HoS80,VHS90} and orthogonal Runge--Kutta--Chebyshev methods (ROCK), based on optimal orthogonal stabilized functions \cite{Abd02,AbM01}; note that ROCK and RKC methods differ only beyond order one. 
Still, when applied to \cref{eq:ode}, the number of stages $s$ of any standard stabilized RK method will, yet again, be determined by the stiffest part $\ff$ and the method eventually become inefficient.

To overcome the stringent step size restriction due to the cheap but stiffer part, $\ff$, while retaining the efficiency of explicit time integration for $\fs$, \emph{multirate methods} 
use a smaller step size, or even an entirely different scheme, for integrating $\ff$. 
Since the early work of Rice \cite{Ric60} and the work of Gear and Wells \cite{GeW84} who proposed a number of multirate strategies for the interlaced time integration of the ``fast'' and ``slow'' components using classical multistep schemes,
various explicit, implicit or hybrid multirate schemes have been developed based on Runge--Kutta methods using splitting techniques for ``fast'' and ``slow'' components or extrapolation techniques \cite{And79,EnL97,GKR01,GuR93,Hof76,Kva99,SHV07,SaM10,SkA89}.
All these methods require a predictor step and either interpolate or extrapolate between ``fast'' and ``slow'' state variables, which is prone to instability. 
Although some of the implicit-explicit (IMEX) methods are provably stable, they are more cumbersome to implement and rapidly become too expensive as the number of ``fast'' unknowns increases.

More recently, Günther and Sandu exploited the generalized additive Runge--Kutta (GARK) framework \cite{SaG15} to devise multirate GARK (MrGARK) methods \cite{GuS16}. Many explicit, implicit and hybrid schemes are developed (up to fourth order accuracy) in \cite{RLS21,SRS19}, although some degree of implicitness is typically required to achieve a larger stability domain. In \cite{RSS20,San19,SeR19},
multirate infinitesimal step (MIS) methods \cite{KnW98,WKG09}, which assume the fast variables integrated exactly, are recast into the GARK framework and further generalized. The resulting multirate infinitesimal (MRI)-GARK methods extend exponential integrators \cite{HoO10} to the nonlinear case, where again the fast (nonlinear) dynamics are integrated exactly. When these semi-discrete schemes are used in practice, that is, in a fully discrete setting, some implicitness or very small steps sizes are typically required for stability.

Local adaptivity and mesh refinement are certainly key to the efficient numerical solution of PDEs with
heterogeneous media or complex geometry. Locally refined meshes, however, also cause a severe
bottleneck for any standard explicit time integration, as the maximal time-step is
dictated by maybe a few small elements in the mesh. To overcome the crippling effect 
of local mesh refinement, various multirate (or local time-stepping) methods \cite{GMM15} were 
proposed following the original local adaptive mesh refinement (AMR) strategy for first-order hyperbolic conservation laws by Berger and Oliger \cite{BeO84} --- see \cite{GaH13} for a review.
For parabolic problems, Ewing et al. \cite{ELV94,ELV90} derived and analyzed implicit 
finite difference schemes when local refinement is utilized in space and time. 
Dawson, Du and Dupont \cite{DQD91} combined implicit time integration in subdomains with
an explicit treatment of the interfaces, which leads to a decoupled but conditionally stable system. 
In \cite{MAM06,ShV00}, various predictor-corrector and domain decomposition methods were combined to iteratively correct the solution or its boundary values at artificial interfaces.  
By using static-regridding, Trompert and Verwer \cite{TrV91,TrV93a,TrV93b,TrV93c} developed a number of multirate time-stepping strategies for local uniform grid refinement (LUGR), where a first integration is performed on a global coarse grid and the accuracy is iteratively improved locally on 
nested and increasingly finer subgrids.

In contrast to the above implicit, or locally implicit, multirate strategies, 
fully explicit RKC time integration was recently combined with the AMR approach \cite{AbR20a,Mir17} 
to tackle diffusion dominated problems. Again, the mesh is divided into two distinct regions, 
the “coarse region,” which contains the larger elements
and corresponds to the mildly stiff part $\fs$, and the “fine region,” which contains the smallest elements
and thus corresponds to the severly stiff component $\ff$. In either subregion, the 
number of stages $s$ in chosen according to the local mesh size, while 
``ghost cell'' values at the coarse-to-fine interface are obtained by interpolating in time between stage values.
For certain problems, however, time interpolation of missing stage values from the other RKC method
can cause numerical instabilities \cite{AbR20a}.

To overcome the stringent stability condition due to a few severely stiff degrees of freedom, we first introduce in 
\cref{sec:afequ} a {\it modified equation}, where the 
spectral radius of its right-hand side, or {\it averaged force}, is bounded by that of the slower term $\fs$ yet still remains a good approximation of \cref{eq:ode}. 
Evaluation of this averaged force requires the solution to a stiff, but cheap, auxiliary problem over short time and forms the basis of our multirate strategy. 
In fact, the numerical integration of the modified equation by any explicit method via such a multirate approach
will be more efficient than integrating \cref{eq:ode} directly with the same explicit method.
In \cref{sec:mrkca}, we devise a multirate Runge--Kutta--Chebyshev (RKC) method explicit in both the fast and slow scales by utilizing
two different RKC methods to integrate the modified equation and evaluate the averaged force.
The resulting multirate RKC (mRKC) method assumes no scale separation, 
requires no interpolation between stages, and remains accurate even if the roles of $\ff$ and $\fs$
change in time. 
The fully discrete stability and accuracy analysis of the  mRKC scheme is given in 
\Cref{sec:stab_conv}.
Finally, in \cref{sec:numexp}, we apply our mRKC method to a
series of test problems from both stiff ordinary and partial differential equations to demonstrate its usefulness and efficiency.

\section{Averaged force, modified equation and multirate algorithm}\label{sec:afequ}
First, we introduce the {\it modified equation} where $f$ in \cref{eq:ode}
is replaced by an {\it averaged force}, $\fe$, which depends on a free parameter $\eta\geq 0$. 
For $\eta=0$ it holds $\fe=f$ whereas for $\eta>0$, the spectrum of $\fe$ is compressed 
and thus $\fe$ is less stiff
than $f$, see \cref{fig:spectrumfeta}. In fact for $\eta>0$ sufficiently large, the spectral radius $\rhoe$ of the 
Jacobian of $\fe$ is bounded by the spectral radius $\rhos$ of the Jacobian of $\fs$, 
i.e. $\rhoe\leq \rhos$; then, the stiffness of the modified equation depends solely on $\fs$ 
and its integration by any explicit method is cheaper than \cref{eq:ode} integrated with the same method. 
Since the condition $\rhoe\leq \rhos$ is already satisfied for $\eta$ relatively small, $\fe$ actually remains a good approximation of $f$. 
Next, we devise a multirate strategy based on the modified equation, which is implemented in \cref{sec:mrkca} using two separate RKC methods. Finally, we analyze the properties of $\fe$, derive a priori error bounds for the solution of the modified equation and perform a stability analysis.

\begin{figure}
		\begin{subfigure}[t]{0.3\textwidth}
			\centering
			\begin{tikzpicture}[scale=\plotimscale]
			\coordinate (a) at (0,0);
			\coordinate (b) at (3.5,0);
			\coordinate (c) at (3.5,0.5);
			\coordinate (d) at (3.5,-0.5);
			\coordinate (e) at (0,-1.3);
			\coordinate (f) at (3.5,-1.3);
			\coordinate (g) at (3.5,-0.8);
			\coordinate (h) at (3.5,-1.8);
			\coordinate (i) at (2.5,-1.3);
			\coordinate (l) at (1.75,0);
			\coordinate (p) at (4,0);
			\coordinate (eq) at (4,-1.3);
			\coordinate (bar1) at (0,-2.15);
			\coordinate (bar2) at (4.2,-2.15);
			\coordinate (q) at (0,-3);
			\coordinate (r) at (3.5,-3);
			\coordinate (s) at (3.5,-2.5);
			\coordinate (t) at (3.5,-3.5);
			\coordinate (lb) at (1.75,-3);
			\draw[thick,<-] (a)--(b);
			\draw[thick,<-] (c)--(d);
			\draw[thick,<-] (e)--(f);
			\draw[thick,<-] (g)--(h);
			\draw[\colorthree,fill=\colorthree,rounded corners=1mm,opacity=0.3] ([xshift=2mm,yshift=-3mm]a) rectangle ([yshift=3mm]b);
			\draw[\colorone,fill=\colorone,rounded corners=1mm,opacity=0.3] ([xshift=0mm,yshift=-3mm]i) rectangle ([yshift=3mm]f);
			\node[fill=white,rounded corners=2pt,inner sep=2pt] at ([xshift=0mm]l) {{$\ff$}};
			\node[fill=white,rounded corners=2pt,inner sep=2pt] at ([xshift=5.1mm]i) {{$\fs$}};
			\node at (p) {{\large $+$}};
			\node at (eq) {{\large $=$}};
			\draw[thick] (bar1)--(bar2);
			\draw[thick,<-] (q)--(r);
			\draw[thick,<-] (s)--(t);
			\draw[\colorthree,fill=\colorthree,rounded corners=1mm,opacity=0.3] ([xshift=2mm,yshift=-3mm]q) rectangle ([yshift=3mm]r);
			\node[fill=white,rounded corners=2pt,inner sep=2pt] at ([xshift=0mm]lb) {{$f$}};
			\end{tikzpicture}
			\caption{For $\eta=~0$, $f = \fe$ .}
			\label{fig:origspe}
		\end{subfigure}\hfill %
		\begin{subfigure}[t]{0.3\textwidth}
			\centering
			\begin{tikzpicture}[scale=\plotimscale]
			\coordinate (a) at (0,0);
			\coordinate (b) at (3.5,0);
			\coordinate (c) at (3.5,0.5);
			\coordinate (d) at (3.5,-0.5);
			\coordinate (e) at (0,-1.3);
			\coordinate (f) at (3.5,-1.3);
			\coordinate (g) at (3.5,-0.8);
			\coordinate (h) at (3.5,-1.8);
			\coordinate (i) at (2.5,-1.3);
			\coordinate (l) at (1.75,0);
			\coordinate (p) at (4,0);
			\coordinate (eq) at (4,-1.3);
			\coordinate (bar1) at (0,-2.15);
			\coordinate (bar2) at (4.2,-2.15);
			\coordinate (q) at (0,-3);
			\coordinate (r) at (3.5,-3);
			\coordinate (s) at (3.5,-2.5);
			\coordinate (t) at (3.5,-3.5);
			\coordinate (lb) at (1.75,-3);
			\draw[thick,<-] (a)--(b);
			\draw[thick,<-] (c)--(d);
			\draw[thick,<-] (e)--(f);
			\draw[thick,<-] (g)--(h);
			\draw[\colorfour,fill=\colorfour,rounded corners=1mm,opacity=0.3] ([xshift=12mm,yshift=-3mm]a) rectangle ([yshift=3mm]b);
			\draw[\colorone,fill=\colorone,rounded corners=1mm,opacity=0.3] ([xshift=0mm,yshift=-3mm]i) rectangle ([yshift=3mm]f);
			\node[fill=white,rounded corners=2pt,inner sep=2pt] at ([xshift=6mm]l) {$\overline{\ff}$};
			\node[fill=white,rounded corners=2pt,inner sep=2pt] at ([xshift=5.1mm]i) {{$\fs$}};
			\node at (p) {{\large $+$}};
			\node at (eq) {{\large $=$}};
			\draw[thick] (bar1)--(bar2);
			\draw[thick,<-] (q)--(r);
			\draw[thick,<-] (s)--(t);
			\draw[\colorfour,fill=\colorfour,rounded corners=1mm,opacity=0.3] ([xshift=12mm,yshift=-3mm]q) rectangle ([yshift=3mm]r);
			\node[fill=white,rounded corners=2pt,inner sep=2pt] at ([xshift=6mm]lb) {{$\fe$}};
			\end{tikzpicture}
			\caption{For $\eta>0 $, the fast term is replaced by a mean $\overline{\ff}$ and $\fe$ becomes less stiff than $f$.}
			\label{fig:modspea}
		\end{subfigure}\hfill %
	\begin{subfigure}[t]{0.3\textwidth}
		\centering
		\begin{tikzpicture}[scale=\plotimscale]
		\coordinate (a) at (0,0);
		\coordinate (b) at (3.5,0);
		\coordinate (c) at (3.5,0.5);
		\coordinate (d) at (3.5,-0.5);
		\coordinate (e) at (0,-1.3);
		\coordinate (f) at (3.5,-1.3);
		\coordinate (g) at (3.5,-0.8);
		\coordinate (h) at (3.5,-1.8);
		\coordinate (i) at (2.5,-1.3);
		\coordinate (ib) at (2.5,0);
		\coordinate (ic) at (2.5,-3);
		\coordinate (l) at (1.75,0);
		\coordinate (p) at (4,0);
		\coordinate (eq) at (4,-1.3);
		\coordinate (bar1) at (0,-2.15);
		\coordinate (bar2) at (4.2,-2.15);
		\coordinate (q) at (0,-3);
		\coordinate (r) at (3.5,-3);
		\coordinate (s) at (3.5,-2.5);
		\coordinate (t) at (3.5,-3.5);
		\coordinate (lb) at (1.75,-3);
		\draw[thick,<-] (a)--(b);
		\draw[thick,<-] (c)--(d);
		\draw[thick,<-] (e)--(f);
		\draw[thick,<-] (g)--(h);
		\draw[\colorone,fill=\colorone,rounded corners=1mm,opacity=0.3] ([xshift=0mm,yshift=-3mm]ib) rectangle ([yshift=3mm]b);
		\draw[\colorone,fill=\colorone,rounded corners=1mm,opacity=0.3] ([xshift=0mm,yshift=-3mm]i) rectangle ([yshift=3mm]f);
		\node[fill=white,rounded corners=2pt,inner sep=2pt] at ([xshift=5.1mm]ib) {$\overline{\ff}$};
		\node[fill=white,rounded corners=2pt,inner sep=2pt] at ([xshift=5.1mm]i) {{$\fs$}};
		\node at (p) {{\large $+$}};
		\node at (eq) {{\large $=$}};
		\draw[thick] (bar1)--(bar2);
		\draw[thick,<-] (q)--(r);
		\draw[thick,<-] (s)--(t);
		\draw[\colorone,fill=\colorone,rounded corners=1mm,opacity=0.3] ([xshift=0mm,yshift=-3mm]ic) rectangle ([yshift=3mm]r);
		\node[fill=white,rounded corners=2pt,inner sep=2pt] at ([xshift=5.1mm]ic) {{$\fe$}};
		\end{tikzpicture}
		\caption{For $\eta>~0$ sufficiently large, the spectrum of $\fe$ and $\fs$ are comparable.}
		\label{fig:modspeb}
	\end{subfigure}%
	\caption{Spectrum of $f$ and of the averaged force $\fe$ for varying $\eta$.}
	\label{fig:spectrumfeta}
\end{figure}
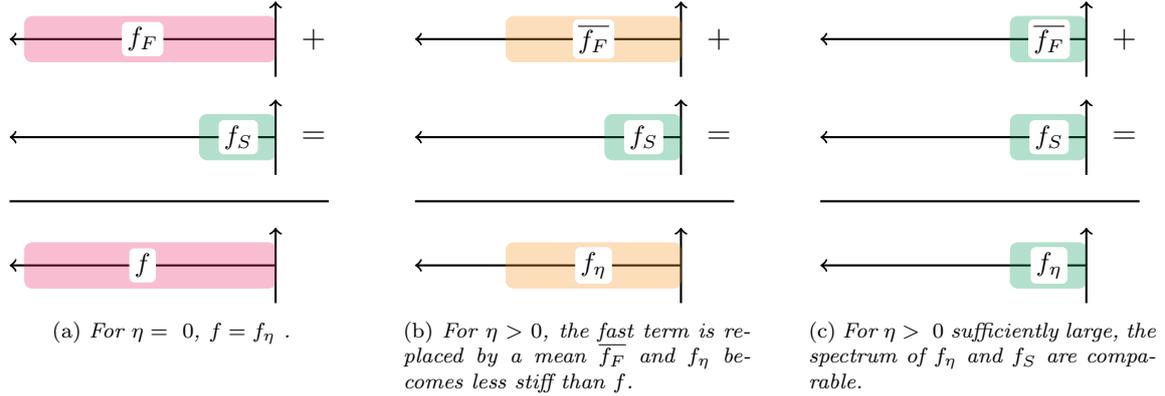

\subsection{Averaged force and modified equation}
We now define an average $\fe$ of $f$ such that the solution $\ye$ of {\it the modified equation},
\begin{equation}\label{eq:odemod}
\ye'=\fe(\ye), \qquad\qquad
\ye(0)=y_0
\end{equation}
is a good approximation of the exact solution $y$ of \cref{eq:ode}, yet the stiffness of \cref{eq:odemod} only depends on $\fs$.
\begin{definition}\label{def:fe}
	For $\eta>0$, the averaged force $\fe:\Rn\rightarrow\Rn$ is defined as
	\begin{equation}\label{eq:deffedif}
		\fe(y)=\frac{1}{\eta}(u(\eta)-y),
	\end{equation}
	where {\it the auxiliary solution} $u:[0,\eta]\rightarrow \Rn$ is defined by {\it the auxiliary equation}
	\begin{align}\label{eq:defu}
	u'&=\ff(u)+\fs(y), & 
	u(0)=y.
	\end{align}
	For $\eta=0$, let $f_0=f$ (note that $f_0=\lim_{\eta\to 0^+}\fe$).
\end{definition}
Therefore, whenever $\fe(\ye(t))$ is evaluated in \cref{eq:odemod}, the auxiliary problem \cref{eq:defu} must be solved over the time interval $[0,\eta]$ with initial value $u(0)=\ye(t)$.
From \cref{eq:defu,eq:deffedif}, it follows that
\begin{equation}\label{eq:deffeint}
\fe(y)=\frac{1}{\eta}\int_0^\eta u'(s)\dif s=\fs(y)+\frac{1}{\eta}\int_0^\eta \ff(u(s))\dif s.
\end{equation}
Hence, $\fe$ is an average of $f$ over the time interval $[0,\eta]$ along the {\it auxiliary solution} $u$.
For $\eta$ sufficiently large, we show in \cref{sec:stab_modeq} that $\rhoe\leq\rhos$.

\paragraph{Multirate strategy.} 
Starting from the modified equation \cref{eq:odemod}, we propose the following explicit multirate strategy for the numerical approximation of \cref{eq:ode}: 
Solve \cref{eq:odemod} with an explicit numerical method, where stability conditions depend on $\fs$ only. Whenever an evaluation of $\fe$ is needed, \cref{eq:defu} is solved with a (possibly different) explicit method, with stability conditions depending on $\ff$. Since $\ff$ is cheap, evaluating $\fe$ in \cref{eq:odemod} by solving \cref{eq:defu} carries about the same computational cost as evaluating $f$  in \cref{eq:ode}. On the other hand, since the stiffness, and hence the step size, needed for \cref{eq:odemod} no longer depends on the fastest scales in the problem, the number of expensive $\fs$ evaluations will be greatly reduced. 

Although the above multirate strategy might at first resemble recently introduced multiscale methods, such as HMM for dissipative ODEs, it is fundamentally different because the averaged equation \cref{eq:odemod}
requires no scale separation: both $\fs$ and $\ff$ may contain slow scales, in contrast to the effective equations derived in \cite{E03}, for instance.
The above multirate strategy might also resemble multirate infinitesimal step (MIS) methods \cite{KnW98,WKG09}. MIS methods, however, only discretize the slow variables
while assuming the fast dynamics to be integrated exactly; hence, the fast variables do not produce any instability. In contrast, the multirate methods introduced here discretize the modified equation \cref{eq:odemod}, where the spectrum of the fast dynamics has been compressed. Moreover, we provide a fully discrete stability and accuracy analysis below.
Finally, an auxiliary problem similar to \cref{eq:defu} also appears in the context of second-order ODEs \cite{HoL99} (see also \cite[VIII.4.2]{HLW06}). In \cite{HoL99}, however, the auxiliary problem is integrated over the entire interval $[-\tau,\tau]$ using smaller step sizes $\tau/N$, while $y_{n+1},y_{n+1}'$ are defined through finite difference approximations of $u$. In contrast, here we solve \cref{eq:defu} over the small time interval $[0,\eta]$ with $\eta\ll \tau$, while $u$ is used to compute $\fe$, which in turn defines the equation for $\ye$.

\subsection{A priori error analysis for the solution of the modified equation}
Here we analyze the effect of the parameter $\eta$ on $\fe$ and
show that $\fe$ satisfies a one-sided Lipschitz condition, which is fundamental for proving convergence and contractivity for general nonlinear problems \cite[IV.12]{HaW02}. Then, we derive bounds on the error introduced by solving \cref{eq:odemod} instead of \cref{eq:ode}, which are independent of the problem's stiffness.

Let $\langle\cdot,\cdot\rangle$ and $\nld{\cdot}$ denote the standard Euclidean scalar product and 
norm in $\Rn$, respectively. To begin, we prove that $\ff$ has a smoothing effect on $\fe$, if it
satisfies a one-sided Lipschitz condition.
\begin{lemma}\label{lemma:phif}
Let $\mu_F\in\Rb$ and $\ff$ satisfy
\begin{equation}\label{eq:contrff}
\langle \ff(z)-\ff(y),z-y\rangle\leq \mu_F\nld{z-y}^2\quad \forall z,y\in\Rn.
\end{equation}
Then 
\begin{equation}\label{eq:defphi}
\begin{aligned}
\nld{\fe(y)}&\leq \varphi(\eta \mu_F)\nld{f(y)}, &\mbox{where}&& \varphi(z)&=\frac{e^z-1}{z} \;\mbox{ for }\;z\neq 0
\end{aligned}
\end{equation}
and $\varphi(0)=1$ is defined by continuous extension. Moreover, if $\ff(y)=A_F\,y$ with $A_F\in\Rb^{n\times n}$, then
\begin{equation}\label{eq:deffephi}
\fe(y)=\varphi(\eta A_F)f(y).
\end{equation}

\end{lemma}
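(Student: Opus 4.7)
The plan is to rewrite $\fe(y)$ as a time average of the derivative of the auxiliary solution and then bound $\nld{u'(t)}$ pointwise via the one-sided Lipschitz condition. By \cref{eq:deffedif} together with the fundamental theorem of calculus,
\begin{equation*}
\fe(y)=\frac{u(\eta)-y}{\eta}=\frac{1}{\eta}\int_0^\eta u'(s)\,\dif s,
\end{equation*}
so $\nld{\fe(y)}\leq \frac{1}{\eta}\int_0^\eta \nld{u'(s)}\,\dif s$. Since $u'(0)=\ff(y)+\fs(y)=f(y)$, proving
\begin{equation*}
\nld{u'(t)}\leq e^{\mu_F t}\,\nld{f(y)},\qquad t\in[0,\eta],
\end{equation*}
would finish the first part, because $\frac{1}{\eta}\int_0^\eta e^{\mu_F s}\,\dif s=\varphi(\eta\mu_F)$ by definition of $\varphi$.

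To obtain this exponential bound I would avoid assuming $\ff\in C^1$ and instead use the standard Dahlquist-style trick: for $h>0$, the shifted function $v(t):=u(t+h)$ satisfies the same \emph{autonomous} ODE as $u$ (since $\fs(y)$ is constant in $t$) but with initial value $v(0)=u(h)$. The one-sided Lipschitz condition \cref{eq:contrff} then yields
\begin{equation*}
\frac{1}{2}\frac{\dif}{\dif t}\nld{v(t)-u(t)}^2=\langle \ff(v(t))-\ff(u(t)),\,v(t)-u(t)\rangle\leq \mu_F\nld{v(t)-u(t)}^2,
\end{equation*}
so Gronwall's inequality gives $\nld{u(t+h)-u(t)}\leq e^{\mu_F t}\nld{u(h)-u(0)}$. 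Dividing by $h$ and letting $h\to 0^+$, using continuity of $u'$ (which follows from $u'=\ff(u)+\fs(y)$ and continuity of $\ff$), yields $\nld{u'(t)}\leq e^{\mu_F t}\nld{u'(0)}=e^{\mu_F t}\nld{f(y)}$, as desired.

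For the linear case $\ff(y)=A_F y$, I would invoke variation of constants applied to $u'=A_F u+\fs(y)$, $u(0)=y$:
\begin{equation*}
u(\eta)=e^{\eta A_F}y+\int_0^\eta e^{(\eta-s)A_F}\,\dif s\,\fs(y).
\end{equation*}
Using the matrix identities $e^{\eta A_F}-I=\eta A_F\,\varphi(\eta A_F)$ and $\int_0^\eta e^{(\eta-s)A_F}\,\dif s=\eta\,\varphi(\eta A_F)$ (both defined through the power series of $\varphi$, which avoids any invertibility assumption on $A_F$), I get
\begin{equation*}
u(\eta)-y=\eta\,\varphi(\eta A_F)\bigl(A_F y+\fs(y)\bigr)=\eta\,\varphi(\eta A_F)f(y),
\end{equation*}
and dividing by $\eta$ produces \cref{eq:deffephi}.

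The only non-routine point is the passage to the limit in $h$ using only the one-sided Lipschitz condition, but the symmetry trick of comparing $u$ with its time shift $u(\cdot+h)$ under the \emph{same} autonomous equation is precisely what makes this clean; no smoothness of $\ff$ beyond continuity is required.
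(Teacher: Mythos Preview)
Your proof is correct, and the treatment of the linear case is essentially the same as the paper's (variation of constants plus the entire-function identity for $\varphi$; your power-series remark avoiding the invertibility of $A_F$ is in fact slightly cleaner than the paper's ``nonsingular first, then extend'' phrasing).

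For the inequality \cref{eq:defphi}, however, you take a genuinely different route. The paper compares the auxiliary solution $u$ to the \emph{constant} function $v(s)\equiv y$, computes the defect $\delta=\nld{v'-\ff(v)-\fs(y)}=\nld{f(y)}$, and invokes the standard perturbation theorem for one-sided Lipschitz right-hand sides (Hairer--N{\o}rsett--Wanner, Thm.~I.10.6) to get $\nld{u(\eta)-y}\le \eta\varphi(\eta\mu_F)\nld{f(y)}$ in one stroke. You instead bound $\nld{u'(t)}$ pointwise by comparing $u$ with its own time shift $u(\cdot+h)$, which satisfies the same autonomous equation, and then integrate. Your argument is more self-contained (only Gronwall is used) and exposes the mechanism---the derivative itself obeys an exponential bound---while the paper's is a one-line citation of a black-box theorem. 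Both lead to the identical estimate.
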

\begin{proof}
Let $v:[0,\eta]\rightarrow\Rn$ be defined by $v(s)=y$ for all $s$. We set
\begin{equation}
\delta \vcentcolon = \nld{v'(s)-\ff(v(s))-\fs(y)}=\nld{f(y)}.
\end{equation}
Since the logarithmic norm of the Jacobian of $\ff$ is bounded by $\mu_F$, we obtain from a classical result on differential inequalities (see \cite[Chapter I.10, Theorem 10.6]{HNW08}) 
\begin{equation}
\nld{u(\eta)-y}=\nld{u(\eta)-v(\eta)}\leq e^{\eta\mu_F}\int_0^\eta e^{-s\mu_F}\delta\dif s=\eta\varphi(\eta\mu_F)\nld{f(y)},
\end{equation}
which yields \cref{eq:defphi} by \cref{eq:deffedif}. 
Now, let $\ff(u)=A_F\, u$ with $A_F\in\Rb^{n\times n}$ nonsingular. Then, the variation-of-constants formula with $\ff(u)=A_F\, u$ in \cref{eq:defu} yields
\begin{equation}
u(\eta)= e^{A_F\eta}\left(y+\int_0^\eta e^{-A_Fs}\fs(y)\dif s\right) = e^{A_F\eta}y+A_F^{-1}(e^{A_F\eta}-I)\fs(y),
\end{equation}
with $I$ the identity matrix. Hence,
\begin{equation}\label{eq:defulinA}
u(\eta) = e^{A_F\eta}y + \eta\varphi(\eta A_F)\fs(y).
\end{equation}
Since $\varphi$ has no poles, $u(\eta)$ as in \cref{eq:defulinA} is well-defined and satisfies \cref{eq:defu} for all matrices $A_F$. By using \cref{eq:deffedif}, we thus obtain
\begin{equation}
\fe(y)=\frac{1}{\eta}(e^{A_F\eta}-I)y+\varphi(\eta A_F)\fs(y)=\varphi(\eta A_F)(A_Fy+\fs(y))=\varphi(\eta A_F)f(y).  \tag*{\qedhere}
\end{equation}
\end{proof}

\begin{remark}
The entire function $\varphi(z)$ is quite common in the theory of exponential integrators \cite{HoO10}. Indeed, when $\ff(y)=A_F\, y$, the solution \cref{eq:defulinA} to the auxiliary problem \cref{eq:defu} corresponds to a single step of the \textit{exponential Euler} method applied to \cref{eq:ode}. Our multirate approach, however,
differs from exponential integrators: First, $u(\eta)$ is just an auxiliary solution used to compute $\fe$, which
is distinct from the solution $\ye$ of the modified equation \cref{eq:odemod}. Second, we do not use an exponential integrator but an RKC method to obtain $u(\eta)$; thus, $\varphi(\eta A_F)$ is never computed explicitly. Third, $\eta$ is not the step size here but a free parameter indicating the length of the integration interval in \cref{eq:defu}.
\end{remark}

The function  $\varphi(z)$, shown in \cref{fig:phi}, satisfies 
\begin{equation}
\varphi(0)=1, \qquad \varphi'(0) = \frac 12, \qquad \lim_{z\to-\infty}\varphi(z)=0, \qquad 0<\varphi(z)<1,\quad \forall z<0. 
\end{equation}
Hence, if $A_F$ is negative definite, multiplication of $f$ by $\varphi(\eta A_F)$ in \cref{eq:deffephi} has a smoothing effect, which can be tuned by varying $\eta\geq 0$ --- see \cref{thm:stifflinscamod} below. A similar property holds for any nonlinear $\ff$ that is contractive, i.e. with $\mu_F\leq 0$ in \cref{eq:contrff}, because of \cref{eq:defphi}. 

Next, we prove under the assumption that the Jacobians of $\ff$ and $\fs$ commute that the averaged force $\fe$ satisfies a one-sided Lipschitz condition. Clearly, commutativity of the Jacobians is a rather strong assumption, rarely satisfied in practice. It is merely used here to provide insight into the behavior of $\fe$ and in fact not needed when subsequently applying the multirate method.
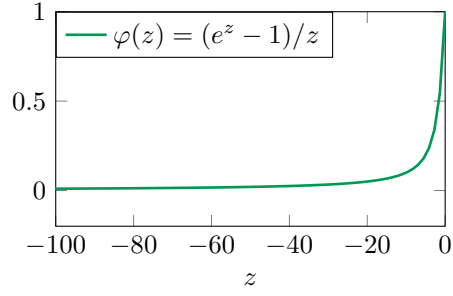
\begin{figure}[t]
	\centering
	\begin{tikzpicture}[scale=\plotimscale]
	\begin{axis}[height=\aspectratio*\plotimsizeu\textwidth,width=\plotimsizeu\textwidth, ymin=-0.2, ymax=1,xmax=0,xmin=-100,legend columns=1,legend style={draw=\legendboxdraw,fill=\legendboxfill,at={(0,1)},anchor=north west},
	xlabel={$z$}, ylabel={},label style={font=\normalsize},tick label style={font=\normalsize},legend image post style={scale=\legendmarkscale},legend style={nodes={scale=\legendfontscale, transform shape}},grid=none]
	\addplot[color=\colorone,line width=\plotlinewidth pt,mark=none] table [x=z,y=phi,col sep=comma] 
	{data/text/Phi_8.csv};\addlegendentry{$\varphi(z)=(e^z-1)/z$}
	\end{axis}
	\end{tikzpicture}
	\caption{The entire function $\varphi(z)$.}
	\label{fig:phi}
\end{figure}
\begin{theorem}\label{thm:fecontr}
Let $A_F\in\Rb^{n\times n}$ be symmetric and $\ff(y)=A_F\, y$. Suppose
\begin{equation}\label{eq:lipcondf}
\langle \frac{\partial f}{\partial y}(w)(z-y),z-y\rangle\leq  \mu \nld{z-y}^2 \quad \forall w,y,z\in\Rn
\end{equation}
with $\mu\leq 0$ and that $A_F\frac{\partial\fs}{\partial y}(w)=\frac{\partial\fs}{\partial y}(w) A_F$ for all $w\in\Rn$. Then,
\begin{equation}
\langle \fe(z)-\fe(y),z-y\rangle \leq \mu_\eta \nld{z-y}^2,
\end{equation}
where $\mu_\eta = \mu \min_{\lambda\in\lambda(A_F)}\{\varphi(\eta\lambda)\}\leq 0$ and $\lambda(A_F)$ is the spectrum of $A_F$.
\end{theorem}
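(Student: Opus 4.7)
\medskip

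The plan is to reduce everything to the linear identity $\fe(y)=\varphi(\eta A_F)f(y)$ established in \cref{lemma:phif}, and then combine the mean value form of $f(z)-f(y)$ with a symmetric square root of $\varphi(\eta A_F)$ to isolate the Jacobian of $f$ and apply \cref{eq:lipcondf}.

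\textbf{Step 1: Reduction via \cref{lemma:phif}.} Since $\ff$ is linear, by \cref{eq:deffephi} we have $\fe(z)-\fe(y)=B\,(f(z)-f(y))$ with $B\vcentcolon=\varphi(\eta A_F)$. Writing the difference $f(z)-f(y)$ through the fundamental theorem of calculus,
\begin{equation*}
f(z)-f(y)=\int_0^1 J(s)(z-y)\,\dif s,\qquad J(s)\vcentcolon=\frac{\partial f}{\partial y}\bigl(y+s(z-y)\bigr),
\end{equation*}
it suffices to bound $\langle B J(s)\,v,v\rangle$ uniformly in $s$, where $v=z-y$.

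\textbf{Step 2: Structural properties of $B$.} Because $A_F$ is symmetric and $\varphi$ is real-analytic, $B$ is symmetric. Moreover, $\varphi(z)>0$ for all $z\in\Rb$ (recall the graph in \cref{fig:phi}), so $B$ is symmetric positive definite with smallest eigenvalue $\lambda_{\min}(B)=\min_{\lambda\in\lambda(A_F)}\varphi(\eta\lambda)>0$. The commutativity assumption $A_F\partial_y\fs(w)=\partial_y\fs(w)A_F$ together with the trivial fact that $A_F$ commutes with itself implies that $A_F$ commutes with $J(s)=A_F+\partial_y\fs(y+s(z-y))$ for every $s$; hence the spectral calculus gives $B J(s)=J(s) B$, and likewise the symmetric positive definite square root $B^{1/2}$ commutes with $J(s)$.

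\textbf{Step 3: Applying the one-sided Lipschitz bound.} Using $B=B^{1/2}B^{1/2}$ and commutativity,
\begin{equation*}
\langle B J(s)\,v,v\rangle=\langle B^{1/2} J(s) B^{1/2}\,v,v\rangle=\langle J(s)\,B^{1/2}v,B^{1/2}v\rangle.
\end{equation*}
Setting $w=B^{1/2}v$ and invoking \cref{eq:lipcondf} yields $\langle J(s)w,w\rangle\leq \mu\|w\|^2=\mu\langle Bv,v\rangle$. Since $\mu\leq 0$ and $\langle Bv,v\rangle\geq \lambda_{\min}(B)\|v\|^2$, multiplying by the nonpositive scalar $\mu$ flips the inequality and gives $\mu\langle Bv,v\rangle\leq \mu\lambda_{\min}(B)\|v\|^2=\mu_\eta\|v\|^2$. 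Integrating over $s\in[0,1]$ preserves this bound, which is exactly the claimed estimate, and $\mu_\eta\leq 0$ follows from $\mu\leq 0$ combined with $\lambda_{\min}(B)>0$.

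\textbf{Main obstacle.} The only nontrivial maneuver is the splitting $B=B^{1/2}B^{1/2}$ with $B^{1/2}$ commuting through $J(s)$; this is where the symmetry of $A_F$ (so that $B^{1/2}$ exists and is symmetric) and the commutativity hypothesis on $\partial_y\fs$ are both essential. Once this is secured, the argument reduces to applying \cref{eq:lipcondf} to the rescaled vector $B^{1/2}v$ and controlling the resulting quadratic form by its smallest eigenvalue.
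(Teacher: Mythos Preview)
Your proof is correct and follows essentially the same route as the paper: both use \cref{lemma:phif} to write $\fe(z)-\fe(y)=\varphi(\eta A_F)(f(z)-f(y))$, express $f(z)-f(y)$ via the integral of the Jacobian along the segment, exploit the symmetric positive definite square root $\varphi(\eta A_F)^{1/2}$ and its commutation with $\partial f/\partial y$ to apply \cref{eq:lipcondf}, and finish with the eigenvalue bound on $\varphi(\eta A_F)$. The only cosmetic difference is that the paper keeps the integral outside while you bound the integrand pointwise in $s$ and then integrate; the arguments are otherwise identical.
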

\begin{proof}
Let $y,z\in\Rn$ and $w(r)=rz+(1-r)y$ for $r\in [0,1]$. We have
\begin{align}
\langle \fe(z)-\fe(y),z-y\rangle &= \langle\varphi(\eta A_F)(f(z)-f(y)),z-y\rangle \\
&= \langle \varphi(\eta A_F)^{1/2}\int_0^1 \frac{\partial f}{\partial y}(w(r))(z-y)\dif r,\varphi(\eta A_F)^{1/2}(z-y)\rangle.
\end{align}
Since $\varphi(z)>0$ for all $z$, $\varphi(\eta A_F)$ is symmetric positive definite and $\varphi(\eta A_F)^{1/2}$ exists. By hypothesis, $\varphi(\eta A_F)^{1/2} $ and $\frac{\partial f}{\partial y}(w(r))$ commute. Therefore
\begin{align}
\langle \fe(z)-\fe(y),z-y\rangle &= \int_0^1\langle \frac{\partial f}{\partial y}(w(r))\varphi(\eta A_F)^{1/2}(z-y),\varphi(\eta A_F)^{1/2}(z-y)\rangle\dif r\\
&\leq \mu \nld{\varphi(\eta A_F)^{1/2}(z-y)}^2\leq \mu \min_{\lambda\in\lambda(A)}\{\varphi(\eta\lambda)\}\nld{z-y}^2. \tag*{\qedhere}
\end{align}
\end{proof}
\Cref{thm:fecontr} shows that $\fe$ indeed satisfies a one-sided Lipschitz condition, if the Jacobians of $\ff$, $\fs$ commute and \cref{eq:lipcondf} holds, which is slightly stronger than requiring that $f$ satisfies a one-sided Lipschitz condition. Indeed, $f$ is one-sided Lipschitz if, and  only if, \cref{eq:lipcondf} holds for all $z,y$ and $w\in [z,y]$, see \cite[I.10]{HNW08}. Next, we bound the error between the solutions of \cref{eq:ode,eq:odemod}.
\begin{theorem}\label{thm:erryeta}
Under the assumptions of \cref{thm:fecontr}, it holds
\begin{equation}\label{eq:bounderr}
\nld{y(t)-\ye(t)}\leq  \max_{\lambda\in\lambda(A_F)}|1-\varphi(\eta\lambda)|\int_0^t e^{\mu_\eta (t-s)}\nld{f(y(s))}\dif s,
\end{equation}
with $\mu_\eta= \mu \min_{\lambda\in\lambda(A_F)}\{\varphi(\eta\lambda)\}\leq 0$.
\end{theorem}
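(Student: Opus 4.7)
The plan is to set $e(t) \vcentcolon = y(t)-\ye(t)$, note that $e(0)=0$ since both solutions start from $y_0$, and derive a differential inequality for $\nld{e(t)}$ by splitting the evolution of $e$ into a \emph{consistency} error (how far $\fe$ is from $f$) plus a \emph{propagation} term (controlled by the one-sided Lipschitz constant of $\fe$ from \cref{thm:fecontr}).

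First, I would write
\begin{equation*}
e'(t) = f(y(t)) - \fe(\ye(t)) = \bigl(f(y(t))-\fe(y(t))\bigr) + \bigl(\fe(y(t))-\fe(\ye(t))\bigr),
\end{equation*}
take the scalar product with $e(t)$, and use $\frac{1}{2}\frac{d}{dt}\nld{e}^2 = \langle e,e'\rangle$. For the first piece, Cauchy--Schwarz gives $\langle e,f(y)-\fe(y)\rangle\leq \nld{e}\,\nld{f(y)-\fe(y)}$. For the second piece, \cref{thm:fecontr} applies directly (with $z=y(t)$ and $y=\ye(t)$), yielding $\langle e,\fe(y)-\fe(\ye)\rangle\leq \mu_\eta\nld{e}^2$. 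Dividing by $\nld{e}$ where nonzero (the standard argument, handling the zero set by taking a right-derivative or by a perturbation argument) produces the linear differential inequality
\begin{equation*}
\frac{d}{dt}\nld{e(t)} \;\leq\; \mu_\eta\,\nld{e(t)} + \nld{f(y(t))-\fe(y(t))}.
\end{equation*}

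Next, I would bound the consistency term using \cref{eq:deffephi} from \cref{lemma:phif}: since $\ff(y)=A_F\,y$ with $A_F$ symmetric, $\fe(y)=\varphi(\eta A_F)f(y)$, so
\begin{equation*}
\nld{f(y)-\fe(y)} = \nld{(I-\varphi(\eta A_F))f(y)} \leq \max_{\lambda\in\lambda(A_F)}|1-\varphi(\eta\lambda)|\,\nld{f(y)},
\end{equation*}
where the last inequality uses that $I-\varphi(\eta A_F)$ is symmetric and diagonalizable in an orthonormal basis with eigenvalues $1-\varphi(\eta\lambda)$, so its $2$-norm equals the spectral bound.

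Finally, applying Gronwall's lemma (integrating-factor form) to the differential inequality with $e(0)=0$ yields
\begin{equation*}
\nld{e(t)}\leq \int_0^t e^{\mu_\eta(t-s)}\nld{f(y(s))-\fe(y(s))}\dif s,
\end{equation*}
and inserting the consistency bound gives exactly \cref{eq:bounderr}. The only subtle point I anticipate is the passage from the inequality for $\nld{e}^2$ to the one for $\nld{e}$ at points where $e$ vanishes; this is standard and can be handled either by restricting to the open set $\{\nld{e}>0\}$ and continuity at the boundary, or by replacing $\nld{e}$ with $\sqrt{\nld{e}^2+\varepsilon}$ and sending $\varepsilon\to 0$. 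Everything else is a direct combination of \cref{lemma:phif}, \cref{thm:fecontr}, and Gronwall.
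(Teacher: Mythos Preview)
Your proposal is correct and follows essentially the same route as the paper: bound the consistency defect $\nld{f(y)-\fe(y)}$ via \cref{lemma:phif} and the symmetry of $A_F$, then use the one-sided Lipschitz constant $\mu_\eta$ from \cref{thm:fecontr} together with a Gronwall argument. The only cosmetic difference is that the paper packages your energy-estimate-plus-Gronwall step into a single citation of the standard perturbation result \cite[Chapter I.10, Theorem 10.6]{HNW08}, whereas you carry out that argument by hand.
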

\begin{proof}
Let $y(t)$ be the solution to \cref{eq:ode}. From \cref{lemma:phif}, we have
\begin{align}
\nld{y'(t)-\fe(y(t))}&=\nld{f(y(t))-\fe(y(t))}=\nld{(I-\varphi(\eta A_F))f(y(t))}\\
&\leq \max_{\lambda\in\lambda(A_F)}|1-\varphi(\eta\lambda)|\nld{f(y(t))} =\vcentcolon \delta(t).
\end{align}
Since the logarithmic norm of the Jacobian of $\fe$ is bounded by $\mu_\eta = \mu \min_{\lambda\in\lambda(A_F)}\{\varphi(\eta\lambda)\}$, as implied by \cref{thm:fecontr}, the estimate \cref{eq:bounderr} follows from classical results on differential inequalities (see \cite[Chapter I.10, Theorem 10.6]{HNW08}).
\end{proof}
Note that the error bound \cref{eq:bounderr} is independent of the stiffness present in $\ff$
and that $\nld{y(t)-\ye(t)}\leq C\eta$ as $\eta\to 0$ because $\varphi(\eta\lambda)=1+\bigo{\eta}$.

\subsection{Stability analysis of the modified equation}\label{sec:stab_modeq}
We now study the stiffness of the modified equation \cref{eq:odemod} given by the spectral radius $\rhoe$ of the Jacobian of $\fe$. In particular, we determine necessary conditions for $\rhoe\leq\rhos$, 
with $\rhos$ the spectral radius of the Jacobian of $\fs$, and hence that the stiffness of the modified equation only depends on the slow components. 
As in \cref{thm:fecontr}, we assume that the Jacobians of $\ff$ and $\fs$ commute. In \cref{sec:stab_conv}, we shall analyze the stability of our multirate method, first under the same commutativity assumption but then also for a $2\times 2$ problem where the Jacobians do not commute. For more general problems, stability is verified numerically in \cref{sec:numexp}.

Let the Jacobians of $\ff$ and $\fs$ commute. Then, they are simultaneously triangularizable and the stability analysis of \cref{eq:ode,eq:odemod} reduces to the scalar \textit{multirate test equation}
\begin{equation}\label{eq:mtesteq}
y' = \lambda y+\zeta y,\qquad\qquad
y(0)=y_0,
\end{equation}
with $\lambda,\zeta\leq 0$ and $y_0\in\Rb$, which corresponds to setting $\ff(y)=\lambda y$ and $\fs(y)=\zeta y$; thus, $\rhof=|\lambda|$ and $\rhos=|\zeta|$. Since we do not assume any scale separation, 
$\lambda$ can take any nonpositive value. 

Since \cref{eq:mtesteq} satisfies the hypotheses of \cref{lemma:phif} with $\mu_F = A_F=\lambda$, we have
\begin{align}\label{eq:solutesteq}
u(\eta) &= (e^{\eta\lambda}+\varphi(\eta\lambda)\eta \zeta )y, \\ \label{eq:fetesteq}
\fe(y) &= \varphi(\eta\lambda)(\lambda+\zeta)y
\end{align}
and \cref{eq:odemod} reduces to
\begin{equation}\label{eq:modtesteq}
\ye'=\varphi(\eta\lambda)(\lambda+\zeta)\ye,\qquad\qquad
\ye(0)=y_0.
\end{equation}
Next, we detemine conditions on $\eta,\lambda$ and $\zeta$ which guarantee that
\begin{equation}
|\varphi(\eta\lambda)(\lambda+\zeta)|\leq|\zeta|,
\end{equation}
and hence that the stiffness of \cref{eq:modtesteq} exclusively depends on $\rhos=|\zeta|$. 
The following technical lemma is used to prove \cref{thm:stifflinscamod} below.

\begin{lemma}\label{lemma:cont_stab}
Let $w\leq 0$ and $\varphi(z)$ be given by \cref{eq:defphi}. Then, $\varphi(z)(z+w)\in [w,0]$ for all $z\leq 0$ if, and only if, $\varphi'(0)|w|\geq 1$, i.e. $|w|\geq 2$ since $\varphi'(0)=1/2$.
\end{lemma}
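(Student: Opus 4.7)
The plan is to set $g(z) := \varphi(z)(z+w)$ and split the two-sided bound $w\leq g(z)\leq 0$ into its two halves. The upper inequality is immediate: on $z\leq 0$ with $w\leq 0$ we have $z+w\leq 0$, and $\varphi(z)>0$ everywhere, so $g(z)\leq 0$. All content therefore lies in the lower bound $g(z)\geq w$, together with the sharpness claim that it forces $\varphi'(0)|w|\geq 1$.

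For necessity, I use the boundary datum $g(0)=\varphi(0)\,w=w$. If $g(z)\geq w$ is to hold throughout $z\leq 0$, then $g$ attains its minimum on $(-\infty,0]$ at the right endpoint, which requires $g'(0)\leq 0$. A direct computation gives
\[
g'(0)=\varphi'(0)(0+w)+\varphi(0)=1-\varphi'(0)|w|,
\]
so $g'(0)\leq 0$ is exactly the advertised condition $\varphi'(0)|w|\geq 1$; since $\varphi'(0)=1/2$, this reads $|w|\geq 2$. If instead $\varphi'(0)|w|<1$, then $g'(0)>0$, so for $z<0$ sufficiently close to $0$ we get $g(z)<g(0)=w$, violating the bound.

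For sufficiency I would exploit the identity $z\varphi(z)=e^z-1$ to rewrite
\[
g(z)-w=(e^z-1)+|w|(1-\varphi(z)),
\]
so that $g(z)\geq w$ becomes $|w|(1-\varphi(z))\geq 1-e^z$ on $z\leq 0$. Both sides vanish at $z=0$ and are strictly positive for $z<0$ (since $\varphi(z)\in(0,1)$ there), so the inequality is monotone in $|w|$ and it suffices to treat the borderline case $|w|=2$. After clearing the denominator $z<0$, that case reduces to the single-variable statement
\[
F(z):=e^z(2-z)-(z+2)\geq 0,\qquad z\leq 0.
\]
One checks $F(0)=0$, $F'(z)=e^z(1-z)-1$ with $F'(0)=0$, and $F''(z)=-ze^z\geq 0$ on $z\leq 0$. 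Hence $F'$ is nondecreasing with $F'(0)=0$, so $F'\leq 0$ on $(-\infty,0]$, making $F$ nonincreasing there and yielding $F(z)\geq F(0)=0$. The main (mild) obstacle is that the inequality $g(z)\geq w$ is \emph{tight to first order} at $z=0$ when $|w|=2$, so a naive comparison of the two sides cannot close the gap; one really needs to descend to the sign of a second derivative to conclude, which is exactly what the analysis of $F$ supplies.
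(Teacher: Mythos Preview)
Your proof is correct and follows essentially the same approach as the paper: the upper bound is immediate, necessity comes from the first-order behavior of $g(z)=\varphi(z)(z+w)$ at $z=0$ (the paper does this via Taylor expansion, you via $g'(0)\le 0$, which are the same computation), and sufficiency is obtained by clearing the denominator and applying a second-derivative sign argument to a function vanishing to first order at the origin. The only cosmetic difference is that the paper keeps $w\le -2$ general and uses $\alpha''(z)=-(2+w+z)e^z\ge 0$ directly, whereas you first reduce to the borderline $|w|=2$ by monotonicity in $|w|$; your $F(z)$ is exactly the paper's $\alpha(z)$ evaluated at $w=-2$.
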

\begin{proof}
	For $z,w\leq 0$, the upper bound $\varphi(z)(z+w)\leq 0$ always holds. Hence, we only need to consider
	the lower bound, 
	\begin{equation}\label{eq:phiw}
	\varphi(z)(z+w)\geq w.
	\end{equation}
	Suppose that \cref{eq:phiw} holds for all $z\leq 0$. In a neighborhood of $z=0$, this yields
	\begin{align}
	0&\leq \varphi(z)(z+w)-w 
	= (\varphi(0)+\varphi'(0)z+\bigo{z^2})(z+w)-w \\ \label{eq:phiwasym}
	&= z(1+\varphi'(0)(z+w))+\bigo{z^2(z+w)},
	\end{align}
	where we have used that $\varphi(0)=1$. Dividing \cref{eq:phiwasym} by $z<0$ and letting $z\to 0$ yields $\varphi'(0)w\leq -1$, and thus $\varphi'(0)|w|\geq 1$. 
	
	Now, let $\varphi'(0)|w|\geq 1$ and hence $w\leq -1/\varphi'(0)=-2$.
	For $z=0$, \cref{eq:phiw} trivially holds. For $z<0$, we multiply \cref{eq:phiw} by $z$
	and prove the resulting equivalent condition:
	\begin{equation}
	\alpha(z)=zw+(1-e^z)(z+w)\geq 0\quad \forall z<0.
	\end{equation}
	Since
	\begin{equation}
	\alpha'(z) =1+w-(1+w+z)e^z, \qquad \alpha''(z) = -(2+w+z)e^z,
	\end{equation}
	we have $\alpha(0)=\alpha'(0)=0$. Since $w\leq -2$, we have
	\begin{equation}
	\alpha(z)=\int_0^z \int_0^s \alpha''(r)\dif r\dif s = -\int_z^0 \int_s^0(2+w+r)e^r\dif r\dif s \geq 0,
	\end{equation}
	which concludes the proof.
\end{proof}
\begin{theorem}\label{thm:stifflinscamod}
Let $\zeta<0$. Then, $\varphi(\eta\lambda)(\lambda+\zeta)\in [\zeta,0]$ for all $\lambda\leq 0$ if, and only if, $\eta\geq 2/|\zeta|$.
\end{theorem}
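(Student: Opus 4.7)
The plan is to reduce the statement directly to \cref{lemma:cont_stab}, which already provides the sharp characterization for expressions of the form $\varphi(z)(z+w)$ with $w\leq 0$ and $z\leq 0$. The key observation is that the quantity $\varphi(\eta\lambda)(\lambda+\zeta)$ is, up to a factor of $1/\eta$, exactly of that form once we rescale both arguments by $\eta$.

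Concretely, I would introduce the substitution $z=\eta\lambda$ and $w=\eta\zeta$, both nonpositive since $\eta\geq 0$, $\lambda\leq 0$, and $\zeta<0$. Then $\lambda+\zeta=(z+w)/\eta$, so
\begin{equation}
\varphi(\eta\lambda)(\lambda+\zeta)=\tfrac{1}{\eta}\varphi(z)(z+w).
\end{equation}
Consequently the condition $\varphi(\eta\lambda)(\lambda+\zeta)\in[\zeta,0]$ is equivalent (after multiplying by $\eta>0$) to $\varphi(z)(z+w)\in[\eta\zeta,0]=[w,0]$. Since $\eta>0$ is fixed and $\lambda$ ranges over $(-\infty,0]$, the rescaled variable $z$ also ranges over all of $(-\infty,0]$, so the quantifier is preserved.

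Applying \cref{lemma:cont_stab} to $w=\eta\zeta$ then yields: the bound holds for every $\lambda\leq 0$ if, and only if, $|w|\geq 2$, i.e., $\eta|\zeta|\geq 2$, equivalently $\eta\geq 2/|\zeta|$. The case $\eta=0$, where the statement reduces to $\lambda+\zeta\in[\zeta,0]$ failing for $\lambda<0$, is consistent with the bound $\eta\geq 2/|\zeta|$ being violated. I do not anticipate a real obstacle here: the entire argument is a one-line rescaling, and all the analytical work (the Taylor expansion at $0$ and the double-integral bound for $w\leq -2$) has already been done in the lemma. The only subtlety worth stating carefully is the bijective correspondence between $\lambda\in(-\infty,0]$ and $z\in(-\infty,0]$ under the fixed positive scaling by $\eta$, which justifies transferring the universal quantifier.
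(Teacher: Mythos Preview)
Your proposal is correct and follows essentially the same approach as the paper: the paper's own proof performs exactly the substitution $z=\eta\lambda$, $w=\eta\zeta$, observes that $\varphi(\eta\lambda)(\lambda+\zeta)\in[\zeta,0]$ is equivalent to $\varphi(z)(z+w)\in[w,0]$, and then invokes \cref{lemma:cont_stab} to conclude $|w|=\eta|\zeta|\geq 2$. Your additional remarks on the $\eta=0$ case and on the bijectivity of the rescaling $\lambda\mapsto z$ are fine clarifications but not strictly needed.
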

\begin{proof}
Setting $z=\eta\lambda$ and $w=\eta\zeta$, we have that
\begin{align}
\varphi(\eta\lambda)(\lambda+\zeta)&\in [\zeta,0] &\mbox{is equivalent to} && \varphi(z)(z+w)\in &[w,0].
\end{align}
In view of \cref{lemma:cont_stab}, this holds for all $\lambda\leq 0$, if and only if $\eta|\zeta|=|w|\geq 2$.
\end{proof}
\Cref{thm:stifflinscamod} implies that for $\eta\geq 2/\rhos$ the stiffness of \cref{eq:modtesteq} depends only on the slow term $\fs$. 
Since $\eta$ does not depend on $\lambda$ and the result holds for all $\lambda\leq 0$, 
there is no need  for any assumption on scale separation.

\section{A stabilized method based on the modified equation: the multirate Runge--Kutta--Chebyshev method}\label{sec:mrkca}

Although the modified equation \cref{eq:odemod} has reduced stiffness, implementing a multirate strategy 
based on \cref{eq:odemod} and \cref{eq:defu} with classical explicit methods remains inefficient,
as it will lead to step size restrictions due to their inherent stiffness. Instead, 
we introduce here the mRKC method, which is based on two RKC methods and thus has no step size restrictions. Moreover, thanks to the multirate strategy, the number of $\fs$ (expensive) evaluations is independent of the stiffness of $\ff$ and thus no longer suffers from the efficiency loss of any classical stabilized scheme. 
In  \cref{sec:rkc} we briefly recall some basic definitions and properties of the RKC scheme before introducing the mRKC method in \cref{sec:fdrkcsq}

\subsection{Stabilized Runge--Kutta methods}\label{sec:rkc}
Stabilized Runge--Kutta methods \cite{Abd02,AbM01,Leb94,LeM94,Med98,SSV98,HoS80,VHS90} are explicit one-step Runge--Kutta (RK) methods with an extended stability domain along the negative real axis. 
By increasing the number of stages, with respect to classical RK methods, they relax the stringent constraint of standard explicit RK methods on the step size. Their construction relies on Chebyshev polynomials of the first kind, $T_s(x)$, and the classical first-order family of methods, the Runge--Kutta--Chebyshev (RKC) methods \cite{HoS80,VHS90}, is given by the $s$-stage RK method
\begin{equation}\label{eq:rkc}
\begin{aligned}
k_0&=y_n,\\
k_1 &= k_0+\mu_1\tau f(k_0),\\
k_j&= \nu_j k_{j-1}+\kappa_j k_{j-2}+\mu_j\tau f(k_{j-1}) \quad j=2,\ldots,s,\\
y_{n+1}&=k_s,
\end{aligned}
\end{equation}
where $\tau$ is the step size, $\mu_1 = \ou/\oz$ and
\begin{align}\label{eq:defcoeff} 
	\mu_j&= 2\ou  b_j/b_{j-1}, &
	\nu_j&= 2\oz b_j/b_{j-1},  &
	\kappa_j&=-b_j/b_{j-2}  &
	\text{for }j&=2,\ldots,s,
\end{align}
with $\varepsilon\geq 0$, $\oz=1+\varepsilon/s^2$, $\ou=T_s(\oz)/T_s'(\oz)$ and $b_j=1/T_j(\oz)$ for $j=0,\ldots,s$. We note that the explicit Euler method is recovered for $s=1$. 

When applied to the test equation $y'=\lambda y$ and using
\begin{align}
T_0(x)&= 1, & T_1(x)&= x, & T_j(x)&= 2xT_{j-1}(x)-T_{j-2}(x),
\end{align}
\cref{eq:rkc} yields $y_{n+1}=R_s(z)y_n$, where $z=\tau\lambda$ and
\begin{equation}\label{eq:stabpolrkc}
	R_s(z)=b_sT_s(\oz+\ou z)
\end{equation}
is the stability polynomial of the method. As $T_s(x)\geq 1$ for $x\geq 1$ and $|T_s(x)|\leq 1$ for $x\in [-1,1]$ then $|R_s(z)|\leq 1$ for $z\in [-\ell^\varepsilon_s,0]$ and $\ell_s^\varepsilon=2\oz/\ou$. As $\beta s^2\leq \ell_s^\varepsilon$, for $\beta=2-4\varepsilon/3$, then $|z|\leq \beta s^2$ is a sufficient condition for stability \cite{Ver96} and the stability domain 
\begin{equation}
\mathcal{S}=\{ z\in\Cb\,:\, |\Pe_s(z)|\leq 1\} 
\end{equation}
increases quadratically, with respect to the stage number $s$, along the negative real axis (see \cref{fig:rkc_damped_stabdom}).
The parameter $\varepsilon\geq 0$ is a damping parameter introduced to increase stability in the imaginary direction \cite{GuL60} (compare \cref{fig:rkc_undamped_stabdom,fig:rkc_damped_stabdom} for $s=10$). In \cref{fig:rkc_stabpol} we also plot the stability polynomial of the RKC scheme for different values of $s$ and $\varepsilon$, observe as the stability domain increases quadratically with $s$ and as the polynomials satisfy $|R_s(z)|<1$ for $\varepsilon>0$.
\begin{figure}[!tbp]
	\centering
		\begin{subfigure}[t]{\textwidth}
			\centering
			\begin{tikzpicture}
				\node at (0,0) {\includegraphics[trim=0cm 0cm 0cm 0cm, clip, width=0.9\textwidth]{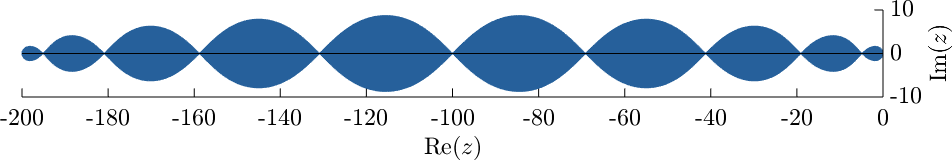}};
				\node at (5.4,0.9) {$\mathbb{C}_{-}$};
			\end{tikzpicture}
			\caption{Stability domain of $\Pe_s(z)$ with $s=10$ and $\varepsilon=0$.}
			\label{fig:rkc_undamped_stabdom}
		\end{subfigure}\vspace{2mm}\\%
	\begin{subfigure}[t]{\textwidth}
		\centering
		\begin{tikzpicture}
			\node at (0,0) {\includegraphics[trim=0cm 0cm 0cm 0cm, clip, width=0.9\textwidth]{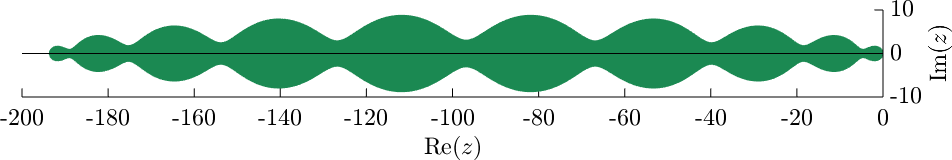}};
			\node at (0,0) {\includegraphics[trim=0cm 0cm 0cm 0cm, clip, width=0.9\textwidth]{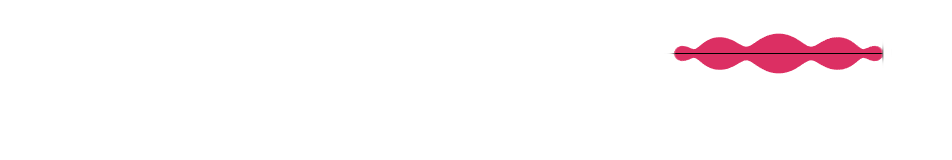}};
			\node at (5.4,0.9) {$\mathbb{C}_{-}$};
		\end{tikzpicture}
		\caption{Stability domains of $\Pe_s(z)$ with $s=10$ (large domain \begin{tikzpicture}
				\node[circle,draw=\colorone, fill=\colorone, inner sep=0pt,minimum size=5pt] at (0,0) {};
			\end{tikzpicture}), $s=5$ (small domain \begin{tikzpicture}
				\node[circle,draw=\colorthree, fill=\colorthree, inner sep=0pt,minimum size=5pt] at (0,0) {};
			\end{tikzpicture}) and $\varepsilon=0.05$.}
		\label{fig:rkc_damped_stabdom}
	\end{subfigure}\vspace{3mm}\\%
\begin{subfigure}[t]{\textwidth}
	\centering
	\begin{tikzpicture}[scale=\plotimscale]
		\begin{axis}[height=0.2\textwidth,width=0.9\textwidth, ymin=-1, ymax=1,xmax=0,xmin=-200,legend columns=3,legend cell align=left,legend style={draw=\legendboxdraw,fill=\legendboxfill,at={(0.5,1.04)},anchor=south,column sep=10pt},yticklabel pos=right,
			xlabel={$z$}, ylabel={},label style={font=\normalsize},tick label style={font=\normalsize},legend image post style={scale=\legendmarkscale},legend style={nodes={scale=\legendfontscale, transform shape}},grid=none]
			\addplot[color=\colortwo,line width=\plotlinewidth pt,mark=\marktwo, mark size=\plotmarksizeu pt, mark repeat=20,mark phase=0] table [x=xk,y=px,col sep=comma] 
			{data/text/RKC_1_s_10_nodamping_polynomial_data.csv};\addlegendentry{$s=10$, $\varepsilon=0$}
			\addplot[color=\colorone,line width=\plotlinewidth pt,mark=\markone, mark size=\plotmarksizeu pt, mark repeat=20,mark phase=0] table [x=xk,y=px,col sep=comma] 
			{data/text/RKC_1_s_10_damping_polynomial_data.csv};\addlegendentry{$s=10$, $\varepsilon=0.05$}
			\addplot[color=\colorthree,line width=\plotlinewidth pt,mark=\markthree, mark size=\plotmarksizeu pt, mark repeat=20,mark phase=0] table [x=xk,y=px,col sep=comma] 
			{data/text/RKC_1_s_5_damping_polynomial_data.csv};\addlegendentry{$s=5$, $\varepsilon=0.05$}
		\end{axis}
	\end{tikzpicture}
	\caption{Stability polynomials $\Pe_s(z)$ for different stage number $s$ and damping parameter $\varepsilon$.}
	\label{fig:rkc_stabpol}
\end{subfigure}%
	\caption{Stability domains and polynomials of the RKC method, for different stage number $s$ and damping $\varepsilon$.}
	\label{fig:rkc_dom_pol}
\end{figure}

For a general right-hand side $f:\Rn\rightarrow \Rn$ the number of stages $s$ in \cref{eq:rkc} is typically chosen such that $\tau\rho\leq \beta s^2$, where $\rho$ is the spectral radius of the Jacobian of $f$. In \eqref{eq:rkc} and below, we consider autonomous problems for convenience only and refer to \cite{VHS90} for the RKC method in nonautonomous form.  The three term recurrence relation allow for low memory requirements even for very large $s$ and good internal stability properties \cite{HoS80}.

\subsection{The multirate RKC method}\label{sec:fdrkcsq}
The multirate RKC scheme is obtained by discretizing \eqref{eq:odemod} with an $s$-stage RKC method, where $\fe$, given by \cref{def:fe}, is approximated by solving problem \eqref{eq:defu} with one step of an $m$-stage RKC method. In this section, we first define the mRKC algorithm and then compare its efficiency to that of the standard RKC method \eqref{eq:rkc}. 

\subsubsection*{The mRKC Algorithm}
Let $\tau>0$ be the step size and $\rhof,\rhos$ the spectral radii of the Jacobians of $\ff,\fs$, respectively (they can be cheaply estimated employing nonlinear power methods \cite{Lin72,Ver80}). 
Now, let the number of stages $s,m$ be the smallest integers satisfying
\begin{align}\label{eq:defsmeta}
\tau\rhos &\leq \beta s^2, & \eta\rhof &\leq  \beta m^2, &\text{with}  && \eta &= \frac{6\tau}{\beta s^2} \frac{m^2}{m^2-1},
\end{align}
for the standard RKC parameter settings $\beta=2-4\varepsilon/3$ and $\varepsilon=0.05$ -- see Section \ref{sec:rkc}. The value for $\eta$ will be clear from the stability analysis in \cref{sec:stabanalysis}.

One step of the mRKC scheme is then given by
\begin{align}\label{eq:defmrkc}
\begin{split}
k_0&=y_n,\\
k_1 &= k_0+\mu_1\tau\bfe(k_0),\\
k_j&= \nu_j k_{j-1}+\kappa_j k_{j-2}+\mu_j\tau \bfe(k_{j-1}) \quad j=2,\ldots,s,\\
y_{n+1} &=k_s,
\end{split}
\end{align}
where the parameters $\mu_j,\nu_j,\kappa_j$ are defined in \eqref{eq:defcoeff} and 
\begin{equation}\label{eq:defbfe}
\bfe(y)=\frac{1}{\eta}(u_\eta-y)
\end{equation}
corresponds to the numerical counterpart of $\fe(y)$ in \eqref{eq:deffedif}. 
The approximation $u_\eta$ of $u(\eta)$ is computed at each evaluation of $\bfe$ by applying one step, of size $\eta$, of the $m$-stage RKC scheme to \eqref{eq:defu}. Hence, $u_\eta$ is given by
\begin{align}\label{eq:defbue}
\begin{split}
u_0 &= y,\\
u_1 &= u_0+\alpha_1\eta(\ff(u_0)+\fs(y)),\\
u_j &= \beta_j u_{j-1}+\gamma_j u_{j-2}+\alpha_j\eta (\ff(u_{j-1})+\fs(y)) \quad j=2,\ldots,m,\\
u_\eta &=u_m.
\end{split}
\end{align}
Here, the parameters $\alpha_j,\beta_j,\gamma_j$ of the $m$-stage RKC scheme \eqref{eq:defbue} are given by
\begin{align}\label{eq:defv01}
\vz&=1+\varepsilon/m^2, & \vu&= T_m(\vz)/T_m'(\vz), & a_j&= 1/T_j(\vz) & \mbox{ for }j=0,\ldots,m&
\end{align}
and 
$\alpha_1 = \vu/\vz$,
\begin{align}\label{eq:defabg} 
		\alpha_j&= 2\vu  a_j/a_{j-1}, & 
		\beta_j&= 2\vz a_j/a_{j-1},    & 
		\gamma_j&=-a_j/a_{j-2}  &
		\text{for }j&=2,\ldots,m.
\end{align}
To compute $m,\eta$ in \eqref{eq:defsmeta}, we let $\eta=6\tau m^2/(\beta s^2(m^2-1))$ in $\eta\rhof\leq \beta m^2$, which implies
\begin{equation}\label{eq:compm}
6\tau\rhof\leq\beta^2s^2(m^2-1).
\end{equation}
Thus, we use \eqref{eq:compm} to compute $m$ and then \eqref{eq:defsmeta} to determine $\eta$.

The mRKC method is given by \eqref{eq:defsmeta}--\eqref{eq:defbue}.
Its stability and first-order accuracy are proved in \cref{thm:stab_mrkc,thm:firstorder} in \cref{sec:stab_conv} below.

\subsubsection*{Efficiency of the multirate RKC method}
Given the spectral radii $\rhof$ and $\rhos$ of the Jacobians of $\ff$ and $\fs$, respectively, we now evaluate the theoretical speed-up in using the mRKC method \cref{eq:defsmeta,eq:defmrkc,eq:defbfe,eq:defbue} over the standard RKC method \eqref{eq:rkc}. In doing so, we set $\varepsilon=0$ and let $s,m$ vary in $\Rb$. 
Now, we let $\costf$ and $\costs$ denote the cost of evaluating $\ff$ and $\fs$, relatively to the cost of evaluating $f$ itself, with $\costf,\costs\in [0,1]$ and $\costf+\costs=1$.
Here, we suppose that the spectral radius $\rho$ of the Jacobian of $f$ is $\rho=\rhof+\rhos$, instead
of setting  $\rho=\rhof$, to allow for a wide range of possible values for $\rhof$ even down to zero.

Since the RKC scheme requires $s=\sqrt{\tau\rho/2}$ evaluations of $f$ per time step, its cost per time step is
\begin{equation}\label{eq:costrkc}
C_{\RKCop}=s(\costf+\costs)=\sqrt{\frac{\tau(\rhof+\rhos)}{2}}.
\end{equation}

For the mRKC method, on the other hand, we infer
from \eqref{eq:defsmeta} with $\beta=2$ that it needs $s=\sqrt{\tau\rhos/2}$ external stages and from \eqref{eq:compm} that it needs $m=\sqrt{3\rhof/\rhos+1}$ internal stages. Since mRKC needs $s$ evaluations of $\fs$ and $s\,m$ evaluations of $\ff$, its cost per time step is
\begin{equation}\label{eq:costmrkc}
C_{\mRKCop}=s\,\costs+s\,m\,\costf = (1-\costf)\sqrt{\frac{\tau\rhos}{2}}+\costf\sqrt{\frac{3\tau\rhof}{2}+\frac{\tau \rhos}{2}}.
\end{equation}
The ratio between \cref{eq:costrkc,eq:costmrkc} yields the \textit{relative speed-up}
\begin{equation}\label{eq:speedup}
S=\frac{C_{\RKCop}}{C_{\mRKCop}}= \frac{\sqrt{\rhof+\rhos}}{(1-\costf)\sqrt{\rhos}+\costf\sqrt{\rhos+3\rhof}}
=\frac{\sqrt{1+r_\rho}}{1+\costf\left(\sqrt{1+3r_\rho}-1\right)},
\end{equation}
with {\it stiffness ratio} $r_\rho=\rhof/\rhos\in [0,\infty)$.

In \cref{fig:speed_up_rhor}, we show the speed-up $S$ as a function of $\costf$ for different values of $r_\rho=\rhof/\rhos$. For $\costf$ sufficiently small, we observe that the mRKC scheme is always faster than RKC ($S>1$). When $\costf\approx 1$, however, the mRKC scheme is slightly slower than RKC ($S<1$), though this case is somewhat irrelevant since by assumption $\ff$ is cheap to evaluate. Nevertheless, we solve the inequality $S>1$, with $S$ as in \eqref{eq:speedup}, for varying $\costf$ to determine the maximal value of $\costf$ that still leads to a reduced cost in using mRKC. We find that the speed-up $S>1$ if, and only if,
\begin{equation}
\costf < \costf^{\max}=\frac{\sqrt{1+r_\rho}-1}{\sqrt{1+3 r_\rho}-1}.
\end{equation}
In \cref{fig:rmax}, we monitor $\costf^{\max}$ as a function of the stiffness ratio $r_\rho$. For small $r_\rho=\rhof/\rhos$, we observe that the evaluation of $\ff$ must be quite cheap. As $\rhof/\rhos$ increases, however, the mRKC method is faster than RKC, even if $\ff$ is relatively expensive to evaluate ($\costf^{\max}>0.5$ for $\rhof/\rhos>8$).

\begin{figure}
		\begin{subfigure}[t]{\subfigsize\textwidth}
			\centering
			\begin{tikzpicture}[scale=\plotimscale]
			\begin{axis}[height=\aspectratio*\plotimsized\textwidth,width=\plotimsized\textwidth,legend columns=1,legend style={draw=\legendboxdraw,fill=\legendboxfill,at={(1,1)},anchor=north east},legend cell align={left}, ymin=-0.2, ymax=9,xticklabel style={yshift=-2pt},
			xlabel={$\phantom{/}\costf\phantom{/}$}, ylabel={$S$},label style={font=\normalsize},tick label style={font=\normalsize},legend image post style={scale=\legendmarkscale},legend style={nodes={scale=\legendfontscale, transform shape}},grid=none]
			\addplot[color=\colorone,line width=\plotlinewidth pt,mark=\markone,mark size=\plotmarksizeu pt,mark repeat=20,mark phase=0] table [x=c,y=S4,col sep=comma] 
			{data/text/S_SbarVScF.csv};\addlegendentry{$\rhof/\rhos=4$}
			\addplot[color=\colortwo,line width=\plotlinewidth pt,mark=\marktwo,mark size=\plotmarksizeu pt,mark repeat=20,mark phase=0] table [x=c,y=S16,col sep=comma] 
			{data/text/S_SbarVScF.csv};\addlegendentry{$\rhof/\rhos=16$}
			\addplot[color=\colorthree,line width=\plotlinewidth pt,mark=\markthree,mark size=\plotmarksizeu pt,mark repeat=20,mark phase=0] table [x=c,y=S64,col sep=comma] 
			{data/text/S_SbarVScF.csv};\addlegendentry{$\rhof/\rhos=64$}
			\addplot[black,dashed,line width=\plotdashedlinewidth pt,domain=0:1] (x,1);\addlegendentry{$1$}
			\end{axis}
			\end{tikzpicture}
			\caption{Theoretical speed-up $S$ of the mRKC method over the standard RKC scheme, with respect to $\costf$ and $\rhof/\rhos$.}
			\label{fig:speed_up_rhor}
		\end{subfigure}\hfill%
		\begin{subfigure}[t]{\subfigsize\textwidth}
			\centering
			\begin{tikzpicture}[scale=\plotimscale]
			\begin{semilogxaxis}[height=\aspectratio*\plotimsized\textwidth,width=\plotimsized\textwidth,legend columns=1,legend style={draw=\legendboxdraw,fill=\legendboxfill,at={(1,1)},anchor=north east},ymin=0.3,ymax=0.6,
			xlabel={$\rhof/\rhos$}, ylabel={$\costf^{\max}$},label style={font=\normalsize},tick label style={font=\normalsize},legend image post style={scale=\legendmarkscale},legend style={nodes={scale=\legendfontscale, transform shape}},grid=none]
			\addplot[color=\colorone,line width=\plotlinewidth pt,mark=none] table [x=r_logv,y=cFmax10logr,col sep=comma] 
			{data/text/cFmax.csv};
			\end{semilogxaxis}
			\end{tikzpicture}
			\caption{Maximal $\costf$ which still yields speed-up $S>1$, w.r.t. $\rhof/\rhos$.}
			\label{fig:rmax}
		\end{subfigure}%
	\caption{The relative speed-up $S$ of the mRKC method over the RKC scheme with respect to $\costf$ and the maximal value for $\costf$ which still leads to an efficiency gain.}
	\label{fig:spedd_up_plots}
\end{figure}
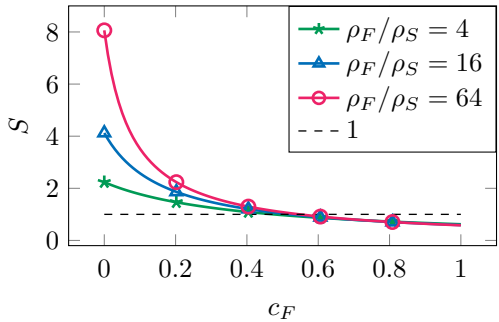
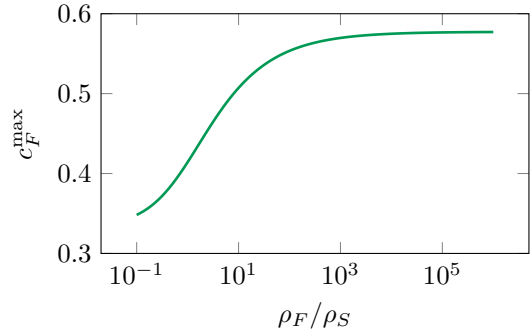

\subsubsection*{Relaxed stability conditions}
The stability conditions \eqref{eq:defsmeta} are necessary when solving a general problem \eqref{eq:ode} without any scale separation. However, in case of scale separation ($\lambda\ll\zeta$), conditions \eqref{eq:defsmeta} can in fact be replaced by
\begin{align}\label{eq:defsmetaweak}
\tau\rhos &\leq\beta s^2, &\eta\rhof&\leq \overline \beta m^2 &\text{with}  && \eta &= \frac{2\tau}{\beta s^2},
\end{align}
$\overline\beta=2-4\overline\varepsilon/3\approx 1.86$ and $\overline\varepsilon=0.1$ -- see \cref{rem:scalesep} for further insight on the derivation of \cref{eq:defsmetaweak}. 
Since the value for $\eta$ in \eqref{eq:defsmetaweak} is smaller than that in \eqref{eq:defsmeta}, $m$ can also be smaller which results in fewer evaluations of $\ff$ in \eqref{eq:defbue} and improved efficiency.
Let $\overline S$ be the relative speed-up in using \eqref{eq:defsmetaweak} instead of \eqref{eq:defsmeta}. In \cref{fig:speedupweak},  we plot $\overline S$ as a function of $\costf$ for different values of $\rhof/\rhos$, as in \cref{fig:speed_up_rhor} for $S$. We observe that $\overline S>1$ for all $\costf\in [0,1-\epsilon]$, for $\epsilon>0$ very small. In \cref{fig:speed_up_comp}, we compare $S$ and $\overline S$ and observe that $\overline S>S$ for all values of $\costf$.

Even when the underlying problem is not scale separable, conditions 
\eqref{eq:defsmetaweak} may in fact be sufficient to guarantee the stability of the mRKC scheme. 
For instance, if \eqref{eq:ode} stems from the spatial discretization of a parabolic problem on a locally refined mesh, where $\fs$ and $\ff$ correspond to the discrete Laplacians in the coarse and locally refined region, and the problem thus is not scale separable, 
\eqref{eq:defsmetaweak} nonetheless suffices to guarantee stability --- see \cref{sec:exp_stab}.
\begin{figure}
		\begin{subfigure}[t]{\subfigsize\textwidth}
			\centering
			\begin{tikzpicture}[scale=\plotimscale]
			\begin{axis}[height=\aspectratio*\plotimsized\textwidth,width=\plotimsized\textwidth,legend columns=1,legend style={draw=\legendboxdraw,fill=\legendboxfill,at={(1,1)},anchor=north east},legend cell align={left}, ymin=-0.2, ymax=9,
			xlabel={$\costf$}, ylabel={$\overline S$},label style={font=\normalsize},tick label style={font=\normalsize},legend image post style={scale=\legendmarkscale},legend style={nodes={scale=\legendfontscale, transform shape}},grid=none]
			\addplot[color=\colorone,line width=\plotlinewidth pt,mark=\markone,mark size=\plotmarksizeu pt,mark repeat=20,mark phase=0] table [x=c,y=Sbar4,col sep=comma] 
			{data/text/S_SbarVScF.csv};\addlegendentry{$\rhof/\rhos=4$}
			\addplot[color=\colortwo,line width=\plotlinewidth pt,mark=\marktwo,mark size=\plotmarksizeu pt,mark repeat=20,mark phase=0] table [x=c,y=Sbar16,col sep=comma] 
			{data/text/S_SbarVScF.csv};\addlegendentry{$\rhof/\rhos=16$}
			\addplot[color=\colorthree,line width=\plotlinewidth pt,mark=\markthree,mark size=\plotmarksizeu pt,mark repeat=20,mark phase=0] table [x=c,y=Sbar64,col sep=comma] 
			{data/text/S_SbarVScF.csv};\addlegendentry{$\rhof/\rhos=64$}
			\addplot[black,dashed,line width=\plotdashedlinewidth pt,domain=0:1] (x,1);\addlegendentry{$1$}
			\end{axis}
			\end{tikzpicture}
			\caption{Theoretical speed-up $\overline S$ of the mRKC method over the standard RKC scheme, with respect to $\costf$ and $\rhof/\rhos$.}
			\label{fig:speedupweak}
		\end{subfigure}\hfill%
		\begin{subfigure}[t]{\subfigsize\textwidth}
			\centering
			\begin{tikzpicture}[scale=\plotimscale]
			\begin{axis}[height=\aspectratio*\plotimsized\textwidth,width=\plotimsized\textwidth,legend columns=1,legend style={draw=\legendboxdraw,fill=\legendboxfill,at={(1,1)},anchor=north east},ymin=-0.2, ymax=9,
			xlabel={$\costf$}, ylabel={Speed-up},label style={font=\normalsize},tick label style={font=\normalsize},legend image post style={scale=\legendmarkscale},legend style={nodes={scale=\legendfontscale, transform shape}},grid=none]
			\addplot[color=\colorone,line width=\plotlinewidth pt,mark=\markone,mark size=\plotmarksizeu pt,mark repeat=20,mark phase=0] table [x=c,y=Sbar64,col sep=comma] 
			{data/text/S_SbarVScF.csv};\addlegendentry{$\overline S, \rhof/\rhos=64$}
			\addplot[color=\colortwo,line width=\plotlinewidth pt,mark=\marktwo,mark size=\plotmarksizeu pt,mark repeat=20,mark phase=0] table [x=c,y=S64,col sep=comma] 
			{data/text/S_SbarVScF.csv};\addlegendentry{$S, \rhof/\rhos=64$}
			\addplot[black,dashed,line width=\plotdashedlinewidth pt,domain=0:1] (x,1);\addlegendentry{$1$}
			\end{axis}
			\end{tikzpicture}
			\caption{Comparison of $\overline{S}$ and $S$.}
			\label{fig:speed_up_comp}
		\end{subfigure}%
	\caption{The relative speed-up $\overline S$ obtained using \cref{eq:defsmetaweak} compared to $S$, obtained with \cref{eq:defsmeta}.}
\end{figure}

\section{Stability and convergence analysis}\label{sec:stab_conv}
{In this section, we perform a stability and convergence analysis of the multirate RKC method introduced in \cref{sec:mrkca}. We will show stability of the scheme on the multirate test equation \eqref{eq:mtesteq} and on a $2\times 2$ model problem. Then we prove its first-order accuracy.}

\subsection{Stability analysis}\label{sec:stabanalysis}
{First, we prove that the mRKC method is stable when it is applied to the multirate test equation \eqref{eq:mtesteq}, which is sufficient when the Jacobians of $\ff$ and $\fs$ are simultaneously triangularizable. Then, we also show stability for a $2\times 2$ model problem where the Jacobians of $\ff$ and $\fs$ are not simultaneously triangularizable, and hence the stability analysis cannot be reduced to \eqref{eq:mtesteq}.}

\subsubsection*{Stability analysis for the multirate test equation}
Since \eqref{eq:defu} is approximated numerically, the stability analysis performed in \cref{sec:stab_modeq} is no longer valid; indeed, $\varphi(z)$ is now replaced by a numerical approximation with different stability properties. Hence, we now compute a closed expression for $u_\eta$ given $y$, as in \eqref{eq:solutesteq} for $u(\eta)$. 
We denote by 
\begin{equation}\label{eq:defPm}
P_m(z)=a_m T_m(\vz+\vu z)
\end{equation}
the stability polynomial of the $m$-stage RKC scheme, with $a_m,\vz,\vu$ from \cref{eq:defv01}.
The next lemma computes the solution $u_\eta$ of \cref{eq:defbue} in the case of the multirate test equation \cref{eq:mtesteq}.

\begin{lemma}\label{lemma:closedbue}
	Let $\lambda,\zeta\leq 0$, $\fs(y)=\zeta y$, $\ff(y)=\lambda y$, $\eta>0$, $m\in\Nb$ and $y\in\Rb$. Then, the solution $u_\eta$ of \cref{eq:defbue}, is given by
	\begin{equation}\label{eq:numutesteq}
	u_\eta = (P_m(\eta \lambda)+ \Ps_m(\eta\lambda)\eta\zeta)y,
	\end{equation}
	where $P_m(z)$ is given in \cref{eq:defPm},
	\begin{equation}\label{eq:PhimTfrac}
	\Ps_m(z) = \frac{P_m(z)-1}{z} \quad \text{for } z\neq 0
	\end{equation}
	and $\Ps_m(0)=1$ is defined by continuous extension.
\end{lemma}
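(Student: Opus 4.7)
The auxiliary problem \eqref{eq:defu} on the multirate test equation reads
\begin{equation*}
u'=\lambda u+\zeta y,\qquad u(0)=y,
\end{equation*}
where $\zeta y$ is a constant (since $y$ is fixed while $u$ evolves). My strategy is to exploit this linearity: the RKC recursion \eqref{eq:defbue} becomes affine in the stages, so I decompose
\begin{equation*}
u_j=A_j(\eta\lambda)\,y+B_j(\eta\lambda)\,\eta\zeta\,y,\qquad j=0,\ldots,m,
\end{equation*}
and aim to identify $A_m=P_m(\eta\lambda)$ and $B_m=\Ps_m(\eta\lambda)$. Substituting this ansatz into \eqref{eq:defbue} and matching the coefficients of $y$ and $\eta\zeta y$ yields two uncoupled recursions, sharing the same multiplier $\beta_j+\alpha_j\eta\lambda$ on $A_{j-1}$, $B_{j-1}$, the same coefficient $\gamma_j$ on $A_{j-2}$, $B_{j-2}$, plus an extra forcing term $\alpha_j$ in the $B_j$ recursion (and analogous initialisations $A_0=1,A_1=1+\alpha_1\eta\lambda$ and $B_0=0,B_1=\alpha_1$).

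\textbf{Identifying $A_j$.} The recursion for $A_j$ is exactly the one obtained when applying the $m$-stage RKC scheme of \cref{sec:rkc} to the scalar problem $u'=\lambda u$, $u(0)=y$. Hence $A_j=P_j(\eta\lambda)=a_jT_j(\vz+\vu\eta\lambda)$ by the classical derivation of the RKC stability polynomial, and in particular $A_m=P_m(\eta\lambda)$.

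\textbf{Identifying $B_j$.} This is the main technical step. Setting $C_j\vcentcolon =\eta\lambda B_j$, I would like to show $C_j=A_j-1$ for all $j$, which by \cref{eq:PhimTfrac} is exactly the claim $B_j=\Ps_j(\eta\lambda)$. Multiplying the $B_j$ recursion by $\eta\lambda$ and using the $A_j$ recursion gives
\begin{equation*}
C_j-(A_j-1)=(\beta_j+\gamma_j-1)+\text{terms already handled by the $A_j$ recursion},
\end{equation*}
so the induction step boils down to the Chebyshev identity $\beta_j+\gamma_j=1$. Using \cref{eq:defabg} this reduces to $2\vz T_{j-1}(\vz)-T_{j-2}(\vz)=T_j(\vz)$, which is precisely the three-term recurrence of the Chebyshev polynomials stated in \cref{sec:rkc}. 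The base cases $C_0=0=A_0-1$ and $C_1=\alpha_1\eta\lambda=A_1-1$ are immediate from the initialisation.

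\textbf{Main obstacle and finishing.} The only nontrivial ingredient is the identity $\beta_j+\gamma_j=1$; once it is in hand, the induction goes through verbatim. The case $\eta\lambda=0$ is handled by the continuous extension $\Ps_m(0)=1$ (which matches $B_m=\alpha_1+\sum_{j\geq 2}\alpha_j=\Ps_m(0)$ in the limit, as can be checked directly or by continuity of $P_m$). Combining $A_m=P_m(\eta\lambda)$ and $B_m=\Ps_m(\eta\lambda)$ yields \eqref{eq:numutesteq}.
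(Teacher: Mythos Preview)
Your argument is correct. The decomposition $u_j=A_j\,y+B_j\,\eta\zeta\,y$, the identification of $A_j$ with the internal RKC stability polynomials, and the induction $C_j:=\eta\lambda B_j=A_j-1$ reducing to the Chebyshev identity $\beta_j+\gamma_j=1$ all go through exactly as you describe; the base cases and the continuous extension at $\eta\lambda=0$ are fine.

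The paper, however, takes a different and more abstract route. It invokes the standard Runge--Kutta formula for an affine test equation (a slight generalisation of \cite[Proposition~3.1]{HaW02}): writing the $m$-stage RKC scheme with Butcher coefficients $(A,b)$, one step on $u'=\lambda u+\zeta y$ with $u(0)=y$ yields
\[
u_\eta = P_m(z)\,y + \eta\, b^\top(I-zA)^{-1}\mathds{1}\,\zeta y,\qquad z=\eta\lambda,
\]
and then the identity $P_m(z)=1+z\,b^\top(I-zA)^{-1}\mathds{1}$ immediately gives $b^\top(I-zA)^{-1}\mathds{1}=(P_m(z)-1)/z=\Ps_m(z)$. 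This argument is shorter and works for \emph{any} explicit RK method, not just RKC. Your proof, by contrast, is completely elementary and self-contained: it avoids the Butcher formalism and the external reference, but hinges on the specific three-term recurrence of RKC and the Chebyshev identity $2\vz T_{j-1}(\vz)-T_{j-2}(\vz)=T_j(\vz)$, so it does not generalise beyond this family.
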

\begin{proof}
A generalization of \cite[Proposition 3.1]{HaW02} to the equation $u'=\lambda u+\zeta y$ (instead of $u'=\lambda u$) yields
\begin{equation}
	u_\eta = P_m(z)y+\eta b^\top (I-zA)^{-1}\mathds{1}\zeta y,
\end{equation}
where $z=\eta\lambda$, $I\in \Rb^{m\times m}$ is the identity matrix, $A,b$ are the coefficients of the Butcher tableau of the $m$-stage RKC scheme and $\mathds{1}\in \Rb^m$ is a vector of ones. Since $P_m(z)=1+z b^\top (I-zA)^{-1} \mathds{1}$ \cite[Proposition 3.1]{HaW02} the result follows.
\end{proof}
%
Note the similarity between \cref{eq:solutesteq} and \cref{eq:numutesteq}, with $e^z,\varphi(z)$ replaced by $P_m(z),\Ps_m(z)$, respectively. 
In \cref{fig:Phim}, we also observe that $\Ps_m(z)$ and $\varphi(z)$ share similar stability properties. Indeed, $\Ps_m(z)$ is the numerical counterpart of $\varphi(z)$,
yet with the exponential replaced by the stability polynomial -- compare \cref{eq:defphi,eq:PhimTfrac}.
\begin{figure}
		\begin{subfigure}[t]{\subfigsize\textwidth}
			\centering
			\begin{tikzpicture}[scale=\plotimscale]
			\begin{axis}[height=\aspectratio*\plotimsized\textwidth,width=\plotimsized\textwidth, ymin=-0.2, ymax=1,xmax=0,xmin=-136,legend columns=1,legend style={draw=\legendboxdraw,fill=\legendboxfill,at={(0,1)},anchor=north west},legend cell align={left},
			xlabel={$z$}, ylabel={},label style={font=\normalsize},tick label style={font=\normalsize},legend image post style={scale=\legendmarkscale},legend style={nodes={scale=\legendfontscale, transform shape}},grid=none]
			\addplot[color=\colorone,line width=\plotlinewidth pt,mark=\markone,mark repeat=32,mark phase=20,mark size=\plotmarksizeu pt] table [x=z,y=phi,col sep=comma] 
			{data/text/Phi_8.csv};\addlegendentry{$\varphi(z)=(e^z-1)/z$}
			\addplot[color=\colortwo,line width=\plotlinewidth pt,mark=\marktwo,mark repeat=32,mark phase=4,mark size=\plotmarksizeu pt] table [x=z,y=Phim,col sep=comma] 
			{data/text/Phi_8.csv};\addlegendentry{$\Ps_8(z)=(P_8(z)-1)/z$}
			\addplot[black,dashed,line width=\plotdashedlinewidth pt,domain=-0.2:1] (-123.91,x);
			\end{axis}
			\end{tikzpicture}
		\end{subfigure}\hfill%
		\begin{subfigure}[t]{\subfigsize\textwidth}
			\centering
			\begin{tikzpicture}[scale=\plotimscale]
			\begin{axis}[height=\aspectratio*\plotimsized\textwidth,width=\plotimsized\textwidth, ymin=-0.2, ymax=1,xmax=0,xmin=-172.26,legend columns=1,legend style={draw=\legendboxdraw,fill=\legendboxfill,at={(0,1)},anchor=north west},legend cell align={left},
			xlabel={$z$}, ylabel={},label style={font=\normalsize},tick label style={font=\normalsize},legend image post style={scale=\legendmarkscale},legend style={nodes={scale=\legendfontscale, transform shape}},grid=none]
			\addplot[color=\colorone,line width=\plotlinewidth pt,mark=\markone,mark repeat=32,mark phase=20,mark size=\plotmarksizeu pt] table [x=z,y=phi,col sep=comma] 
			{data/text/Phi_9.csv};\addlegendentry{$\varphi(z)=(e^z-1)/z$}
			\addplot[color=\colortwo,line width=\plotlinewidth pt,mark=\marktwo,mark repeat=32,mark phase=4,mark size=\plotmarksizeu pt] table [x=z,y=Phim,col sep=comma] 
			{data/text/Phi_9.csv};\addlegendentry{$\Ps_{9}(z)=(P_{9}(z)-1)/z$}
			\addplot[black,dashed,line width=\plotdashedlinewidth pt,domain=-0.2:1] (-156.82,x);
			\end{axis}
			\end{tikzpicture}
		\end{subfigure}%
	\caption{Illustration of $\varphi(z)$ and $\Ps_m(z)$ for $m=8$ (left) and $m=9$ (right). The dashed line indicates the end of the stability domain.}
	\label{fig:Phim}
\end{figure}
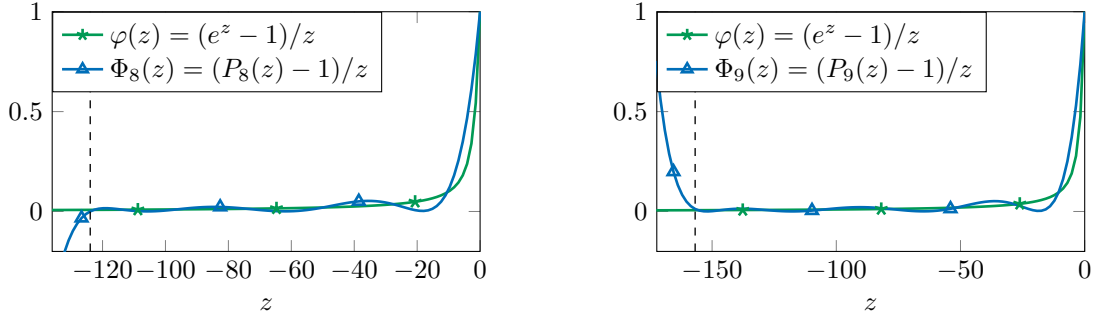

We can now compute the stability polynomial of the mRKC scheme. From \cref{eq:defbfe,eq:numutesteq,eq:PhimTfrac}, we get
\begin{equation}\label{eq:bfetesteq}
\bfe(y)= \frac{1}{\eta}(\Pi_m(\eta\lambda)+\Ps_m(\eta\lambda)\eta\zeta-1)y  =\Ps_m(\eta\lambda)(\lambda+\zeta)y,
\end{equation}
which is the numerical counterpart of $\fe$ in \cref{eq:fetesteq}. Now, we insert \cref{eq:bfetesteq} into \cref{eq:defmrkc}, which leads to 
\begin{equation}
y_{n+1} = \Pe_s(\tau \Ps_m(\eta\lambda)(\lambda+\zeta))y_n,
\end{equation} 
with $R_s(z)$ the stability polynomial of the $s$-stage RKC scheme defined in \cref{sec:rkc}, and hence motivates the following definition.
\begin{definition}
	Let $s,m\in\Nb$, $\tau>0$ be a step size, $\eta>0$ and $\lambda,\zeta\leq 0$. The stability polynomial of the $(s,m)$-stage mRKC scheme \eqref{eq:defmrkc}--\eqref{eq:defbue} is defined as
	\begin{equation}\label{eq:defstabpol}
	R_{s,m}(\lambda,\zeta,\tau,\eta)=R_s(\tau \Ps_m(\eta\lambda)(\lambda+\zeta)),
	\end{equation}
with $R_s(z)$ as in \cref{eq:stabpolrkc} and $\Phi_m(z)$ as in \cref{eq:PhimTfrac}.
\end{definition}
The following lemma is the discrete version of \cref{lemma:cont_stab} and is needed to prove stability of the mRKC scheme in \cref{thm:stab_mrkc} below. Its proof is purely technical and postponed to \cref{app:proofs}.

\begin{lemma}\label{lemma:disc_stab}
	Let $m\in\Nb$ and $w\leq 0$. There exists $\overline\varepsilon_m>0$ such that for $\varepsilon\leq\overline\varepsilon_m$, $\Ps_m(z)(z+w)\in [w,0]$ for all $z\in [-\ell_m^\varepsilon,0]$ if, and only if, $\Ps_m'(0)|w|\geq 1$, i.e.  $|w|\geq 2/P_m''(0)$ since $\Ps_m'(0)=P_m''(0)/2$. 
\end{lemma}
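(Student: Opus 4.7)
The proof mirrors the continuous analogue (\cref{lemma:cont_stab}), with $\varphi$ and $e^z$ replaced by $\Ps_m$ and $P_m$. Two new features appear relative to that proof: the stability interval is bounded, $z\in[-\ell_m^\varepsilon,0]$ instead of $z\le 0$, and $P_m$ oscillates rather than being monotone. These are precisely the sources of the smallness requirement on $\varepsilon$.

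The upper bound $\Ps_m(z)(z+w)\le 0$ is free on the stability interval: since $|P_m(z)|\le 1$ there, $P_m(z)-1\le 0$, hence $\Ps_m(z)=(P_m(z)-1)/z\ge 0$ for $z<0$, and combining with $z+w\le 0$ yields the claim. For necessity, assume $\Ps_m(z)(z+w)\ge w$ for all $z\in[-\ell_m^\varepsilon,0]$; expanding $\Ps_m(z)=1+\Ps_m'(0)z+O(z^2)$ near zero gives
\begin{equation*}
0\le\Ps_m(z)(z+w)-w = z\bigl(1+\Ps_m'(0)(z+w)\bigr)+O\bigl(z^2(z+w)\bigr).
\end{equation*}
Dividing by $z<0$ and letting $z\to 0^-$ forces $1+\Ps_m'(0)w\le 0$, i.e. $\Ps_m'(0)|w|\ge 1$. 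The identity $\Ps_m'(0)=P_m''(0)/2$ follows by differentiating $z\Ps_m(z)=P_m(z)-1$ twice at zero and using $P_m(0)=1$, $P_m'(0)=1$ (first-order consistency of the RKC scheme).

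For sufficiency, multiplying the desired bound $\Ps_m(z)(z+w)\ge w$ by $z\le 0$ (which flips the inequality) reduces the claim to showing $\alpha_m(z)\ge 0$ on $[-\ell_m^\varepsilon,0]$, where
\begin{equation*}
\alpha_m(z)=zw+(1-P_m(z))(z+w).
\end{equation*}
A direct computation gives $\alpha_m(0)=\alpha_m'(0)=0$ and $\alpha_m''(z)=-2P_m'(z)-P_m''(z)(z+w)$, so $\alpha_m''(0)=-2-P_m''(0)w\ge 0$ exactly under the hypothesis. Hence $\alpha_m\ge 0$ in a neighborhood of $z=0$. The core difficulty is extending this nonnegativity to the full interval: contrary to the continuous case where $\alpha''(z)=-(2+w+z)e^z$ has an obvious sign, here $P_m'$ and $P_m''$ oscillate, so $\alpha_m''$ can change sign away from $0$.

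My plan for this last step is to rewrite $\alpha_m$ at the extremal points of $P_m$ on $[-\ell_m^\varepsilon,0]$ by means of the Chebyshev identities $T_m(\cos\theta)=\cos(m\theta)$ and $(x^2-1)T_m''(x)=xT_m'(x)-m^2T_m(x)$, and to verify the required inequality extremum by extremum; between consecutive extrema the nonnegativity of $\alpha_m$ is then recovered from its boundary values together with uniform bounds on $\alpha_m''$ using the representation $\alpha_m(z)=\int_z^0\int_s^0\alpha_m''(r)\,dr\,ds$. The role of $\overline\varepsilon_m$ is to keep $\vz=1+\varepsilon/m^2$ and $\vu=T_m(\vz)/T_m'(\vz)$ close enough to their undamped values ($\varepsilon=0$) that each of these finitely many extremal sign conditions is preserved under perturbation by damping; equivalently, $\overline\varepsilon_m$ is the largest value up to which all sign conditions aggregated from the $m-1$ interior extrema of $P_m$ still hold. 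Performing this uniform sign tracking is the technical heart of the proof and the reason it is relegated to the appendix.
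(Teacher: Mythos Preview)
Your treatment of the upper bound and of necessity matches the paper exactly, and your overall strategy for sufficiency --- prove a strict inequality at $\varepsilon=0$ and extend to small $\varepsilon$ by continuity --- is also what the paper does. The technical execution of the $\varepsilon=0$ case, however, is where your proposal remains a plan rather than a proof, and the plan as stated has a gap.

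Your idea is to check $\alpha_m\ge 0$ at the extrema of $P_m$ and then ``recover'' nonnegativity between consecutive extrema from boundary values and bounds on $\alpha_m''$, citing the representation $\alpha_m(z)=\int_z^0\int_s^0\alpha_m''(r)\,dr\,ds$. But that representation is anchored at $z=0$ (where $\alpha_m=\alpha_m'=0$), not at the extrema of $P_m$; it does not localize to subintervals. Knowing $\alpha_m\ge 0$ at two consecutive extrema of $P_m$ and having a bound on $\alpha_m''$ does not by itself prevent $\alpha_m$ from dipping below zero in between, since the extrema of $P_m$ are not the extrema of $\alpha_m$. You would need a genuine argument controlling the critical points of $\alpha_m$ (equivalently, of $\beta_0=\Ps_m(z)(z+w)$), and that is precisely what the paper supplies.

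The paper works directly with $\beta_0$ rather than $\alpha_m$ and uses a zero-counting argument for $\beta_0'$: it observes that $\beta_0'(z_{2k})=0$ at every local maximum $z_{2k}$ of $P_m$ (where $P_m=1$), locates further zeros between them by sign changes, and thereby accounts for $m-2$ of the at most $m-1$ zeros of the degree-$(m-1)$ polynomial $\beta_0'$. This leaves at most one zero in $(z_2,0)$, and since $\beta_0'(0^-)<0$ by the Taylor expansion you wrote, $\beta_0'\le 0$ on $[z_2,0)$ and $\beta_0(z)>\beta_0(0)=w$ there. For $z\le z_2$ the paper does \emph{not} track signs further but uses the crude bound $|P_m-1|\le 2$ together with the explicit value $z_2=-2m^2\sin^2(\pi/m)$ to get $\beta_0(z)>w$ directly. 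These two ingredients --- the degree/zero-counting argument on $[z_2,0)$ and the coarse estimate for $z\le z_2$ --- are the missing ideas in your sketch.
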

For $\varepsilon=0$, it holds $2/\Pi_m''(0)=6 m^2/(m^2-1)>6$. In the continuous setting, the condition on $w$ in \cref{lemma:cont_stab} was $|w|\geq 2$. For the discrete mRKC scheme, however, $|w|>6$ is necessary because of the milder slope of $\Ps_m(z)$ at the origin, see \cref{fig:Phim}.
\begin{remark}\label{rem:scalesep}
	In the case of scale separation, $z$ is bounded away from the origin and the value of $\Phi_m(z)$ thus considerably smaller than $1$, see \cref{fig:Phim}. Hence, the condition $\Ps_m(z)(z+w)\in [w,0]$ is already satisfied for $|w|\geq 2$ and a slightly larger damping $\overline\varepsilon=0.1$, so that the larger value $|w|\geq 2/P_m''(0)\approx 6$, required by \cref{lemma:disc_stab}, is no longer necessary. By allowing $|w|\geq 2$ instead of $|w|\geq 2/P_m''(0)$ in \cref{thm:stab_mrkc} below, we find that the weaker stability conditions \cref{eq:defsmetaweak} already guarantee stability in the case of scale separation -- see \cite[Section 3.4.5]{Ros20} for further details.
\end{remark}
\begin{theorem}\label{thm:stab_mrkc}
Let $\overline\varepsilon_m$ be as in \cref{lemma:disc_stab} and, for $\varepsilon\geq 0$, let $\varepsilon_m=\min\{\varepsilon,\overline{\varepsilon}_m\}$. Let $\lambda\leq 0$ and $\zeta<0$. Then, for all $\tau>0,s,m$ and $\eta$ such that
\begin{align}\label{eq:defsmetascalar}
\tau |\zeta| &\leq\ell_s^\varepsilon, & \eta|\lambda| &\leq \ell^{\varepsilon_m}_m &\text{with}  && \eta \geq \frac{6\tau}{\ell_s^\varepsilon}\frac{m^2}{m^2-1},
\end{align}
$|\Pe_{s,m}(\lambda,\zeta,\tau,\eta)|\leq 1$, i.e. the mRKC scheme is stable.
\end{theorem}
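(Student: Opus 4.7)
The plan is to reduce the scalar stability problem to an application of \cref{lemma:disc_stab}. Since $R_{s,m}(\lambda,\zeta,\tau,\eta)=R_s(\tau\Ps_m(\eta\lambda)(\lambda+\zeta))$ and $|R_s(\xi)|\leq 1$ for every $\xi\in[-\ell_s^\varepsilon,0]$, the theorem follows once I establish the inclusion $\tau\Ps_m(\eta\lambda)(\lambda+\zeta)\in[-\ell_s^\varepsilon,0]$. Setting $z=\eta\lambda$ and $w=\eta\zeta$, this is the same as $\Ps_m(z)(z+w)\in[-\eta\ell_s^\varepsilon/\tau,0]$. The upper bound is immediate: by the second inequality in \cref{eq:defsmetascalar}, $z\in[-\ell_m^{\varepsilon_m},0]$, so $P_m(z)\leq 1$ and hence $\Ps_m(z)=(P_m(z)-1)/z\geq 0$ (with continuous extension at $z=0$); combined with $z+w\leq 0$, this gives $\Ps_m(z)(z+w)\leq 0$.

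For the nontrivial lower bound I would introduce the \emph{worst admissible value} $\tilde w:=-\eta\ell_s^\varepsilon/\tau$ and compare it with $w$. Two observations drive the argument: condition~1 of \cref{eq:defsmetascalar} yields $\zeta\geq-\ell_s^\varepsilon/\tau$ and hence $w\geq\tilde w$, while condition~3 rewrites as $|\tilde w|=\eta\ell_s^\varepsilon/\tau\geq 6m^2/(m^2-1)$. A direct computation using the Chebyshev ODE $(1-x^2)T_m''(x)=xT_m'(x)-m^2T_m(x)$ evaluated at $\upsilon_0=1+\varepsilon_m/m^2$ shows that $2/P_m''(0)=6m^2/(m^2-1)$ when $\varepsilon_m=0$ and that $2/P_m''(0)\leq 6m^2/(m^2-1)$ for every $\varepsilon_m\in[0,\overline\varepsilon_m]$; thus $|\tilde w|\geq 2/P_m''(0)$. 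Since $\varepsilon_m\leq\overline\varepsilon_m$ by definition, \cref{lemma:disc_stab} applies with $\tilde w$ in place of $w$ and yields $\Ps_m(z)(z+\tilde w)\geq\tilde w$ for every $z\in[-\ell_m^{\varepsilon_m},0]$.

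It remains to transfer the bound from $\tilde w$ to the actual $w$, which I would do by monotonicity in the second argument: since $\Ps_m(z)\geq 0$ and $w-\tilde w\geq 0$,
\begin{equation*}
\Ps_m(z)(z+w)=\Ps_m(z)(z+\tilde w)+\Ps_m(z)(w-\tilde w)\geq\Ps_m(z)(z+\tilde w)\geq\tilde w.
\end{equation*}
Multiplication by $\tau/\eta>0$, using $(\tau/\eta)\tilde w=-\ell_s^\varepsilon$, then gives $\tau\Ps_m(\eta\lambda)(\lambda+\zeta)\geq-\ell_s^\varepsilon$, closing the required inclusion. The main obstacle I foresee is not the reduction itself but the bookkeeping of the two damping parameters: one must verify that the factor $6m^2/(m^2-1)$ imposed by condition~3 truly dominates the threshold $2/P_m''(0)$ on the whole admissible range $\varepsilon_m\in[0,\overline\varepsilon_m]$, which boils down to checking monotonicity of $T_m(\upsilon_0)T_m''(\upsilon_0)/T_m'(\upsilon_0)^2$ in $\upsilon_0$ near $\upsilon_0=1$. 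Modulo this short Chebyshev-polynomial check, the proof is a clean combination of \cref{lemma:disc_stab}, the nonnegativity of $\Ps_m$ on the internal stability domain, and the monotonicity of $w\mapsto\Ps_m(z)(z+w)$.
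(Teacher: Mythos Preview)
Your proposal is correct and follows essentially the same route as the paper: reduce to the inclusion $\tau\Ps_m(\eta\lambda)(\lambda+\zeta)\in[-\ell_s^\varepsilon,0]$, introduce the extremal $\tilde w=-\eta\ell_s^\varepsilon/\tau$, use nonnegativity of $\Ps_m$ and monotonicity in $w$ to reduce to $\Ps_m(z)(z+\tilde w)\geq\tilde w$, and finish with \cref{lemma:disc_stab} once $|\tilde w|\geq 2/P_m''(0)$ is established. The only difference is bookkeeping: the paper isolates the required inequality $2/P_m''(0)\leq 6m^2/(m^2-1)$ as a separate lemma (\cref{lemma:ddRincr}), proving that $P_m''(0)=T_m(\vz)T_m''(\vz)/T_m'(\vz)^2$ is increasing for all $\vz\geq 1$ (not merely ``near $\vz=1$'', as you write---note that $\varepsilon_m$ can range over the whole interval $[0,\overline\varepsilon_m]$), so you should phrase the Chebyshev check accordingly.
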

\begin{proof}
If $\tau \Ps_m(\eta\lambda)(\lambda+\zeta)\in [-\ell_s^\varepsilon,0]$ then $|\Pe_{s,m}(\lambda,\zeta,\tau,\eta)|=|\Pe_s(\tau \Ps_m(\eta\lambda)(\lambda+\zeta))|\leq 1$. Hence, it is sufficient to prove the equivalent condition:
\begin{align}
\Phi_m(\eta\lambda)(\eta\lambda+\eta\zeta)&\in [w(\eta),0], & \mbox{with} && w(\eta)&=-\frac{\eta}{\tau}\ell_s^\varepsilon.
\end{align}
Since $\eta\lambda\in [-\ell_m^{\varepsilon_m},0]$, it holds $|\Pi_m(\eta\lambda)|\leq 1$ and from \eqref{eq:PhimTfrac} we thus deduce that $\Ps_m(\eta\lambda)\geq 0$. Furthermore, \eqref{eq:defsmetascalar} yields $\eta\zeta\geq w(\eta)$ which implies
\begin{equation}
0\geq \Phi_m(\eta\lambda)(\eta\lambda+\eta\zeta)\geq  \Phi_m(z(\eta))(z(\eta)+w(\eta)),
\end{equation}
with $z(\eta)=\eta\lambda$. Hence, it is sufficient to show that $\Phi_m(z(\eta))(z(\eta)+w(\eta))\in [w(\eta),0]$ for all $z(\eta)\in [-\ell^{\varepsilon_m}_m,0]$. From \cref{lemma:disc_stab}, we know that
\begin{equation}
|w(\eta)| \geq  \frac{2}{\Pi_m''(0)}
\end{equation}
is necessary and sufficient. Since $T_m'(v_0)^2/(T_m(v_0)T_m''(v_0))$ is decreasing for $v_0\geq 1$ (see \cref{lemma:ddRincr}), we infer from the definition of $\eta$ in \eqref{eq:defsmetascalar} that
\begin{equation}
|w(\eta)|\geq 6\frac{m^2}{m^2-1}=\frac{2\, T_m'(1)^2}{T_m(1)T_m''(1)}\geq \frac{2\, T_m'(v_0)^2}{T_m(v_0)T_m''(v_0)}=\frac{2}{\Pi_m''(0)}. \tag*{\qedhere}
\end{equation}
\end{proof}

In the continuous setting in \cref{sec:stab_modeq}, $\eta$ directly depends on $\fs$; indeed, the 
condition $|\varphi(\eta\lambda)(\lambda+\zeta)|\leq |\zeta|$ implies $\eta\geq 2/|\zeta|$ (see \cref{thm:stifflinscamod}). Therefore, $\eta$ could rapidly grow as $\zeta\to 0$. In contrast, for the mRKC method, $\eta$ depends only indirectly on $\fs$: $\eta$ depends on the $s$-stage RKC method, which in turn depends on $\fs$. This indirect dependence of $\eta$ on $\fs$ creates a ``protective buffer'', which prevents the explosion of $\eta$ as $\zeta\to 0$; indeed, $\ell_s^\varepsilon\geq 2$ for all $s\in\Nb$.

The restriction $\varepsilon\leq \overline\varepsilon_m$ is necessary for proving \cref{lemma:disc_stab}, but probably not needed in practice. Indeed, we have verified numerically that for any $\varepsilon \geq 0$, $\Ps_m(z)(z+w)\in [w,0]$ for all $z\in [-\ell_m^\varepsilon,0]$ if, and only if, $|w|\geq 2/\Pi_m''(0)$. Hence, we can suppose $\varepsilon_m=\varepsilon$ in \eqref{eq:defsmetascalar} and replace $\ell_s^\varepsilon,\ell_m^{\varepsilon_m}$ by $\beta s^2,\beta m^2$, respectively, which yields \eqref{eq:defsmeta}.
In \cref{fig:Rsm}, we display the stability polynomial $R_{s,m}(\lambda,\zeta,\tau,\eta)$ for $s=5$ and $m=3$ as a function of $\lambda$ for $\varepsilon=0.05$ or $\varepsilon=1$. Here, we set $\tau=1$, $\eta$ to its lower bound in \eqref{eq:defsmetascalar}, and $\zeta=-\ell_s^\varepsilon,-\ell_s^\varepsilon/2$ or $0$. Since $|R_{s,m}(\lambda,\zeta,\tau,\eta)|\leq 1$, the mRKC method is always stable.
\begin{figure}
		\begin{subfigure}[t]{\subfigsize\textwidth}
			\centering
			\begin{tikzpicture}[scale=\plotimscale]
			\begin{axis}[height=\aspectratio*\plotimsized\textwidth,width=\plotimsized\textwidth, ymin=-1.0, ymax=1.0,xmax=0,xmin=-125.08,legend columns=3,legend style={draw=\legendboxdraw,fill=\legendboxfill,at={(0.5,1.05)},anchor=south},legend cell align={left},
			xlabel={$\lambda$}, ylabel={},label style={font=\normalsize},tick label style={font=\normalsize},legend image post style={scale=\legendmarkscale},legend style={nodes={scale=\legendfontscale, transform shape}},grid=none]
			\addplot[color=\colorone,line width=\plotlinewidth pt,mark=\markone,mark repeat=24,mark phase=0,mark size=\plotmarksizeu pt] table [x=lambdas,y=Rsm1,col sep=comma] 
			{data/text/R_5_3_damp_0p05.csv};\addlegendentry{$\zeta=-\ell^\varepsilon_s$}
			\addplot[color=\colortwo,line width=\plotlinewidth pt,mark=\marktwo,mark repeat=24,mark phase=8,mark size=\plotmarksizeu pt] table [x=lambdas,y=Rsm2,col sep=comma] 
			{data/text/R_5_3_damp_0p05.csv};\addlegendentry{$\zeta=-\ell^\varepsilon_s/2$}
			\addplot[color=\colorthree,line width=\plotlinewidth pt,mark=\markthree,mark repeat=24,mark phase=16,mark size=\plotmarksizeu pt] table [x=lambdas,y=Rsm3,col sep=comma] 
			{data/text/R_5_3_damp_0p05.csv};\addlegendentry{$\zeta=0$}
			\end{axis}
			\end{tikzpicture}
			\caption{Small damping $\varepsilon=0.05$.}
			\label{fig:Rsma}
		\end{subfigure}\hfill%
		\begin{subfigure}[t]{\subfigsize\textwidth}
			\centering
			\begin{tikzpicture}[scale=\plotimscale]
			\begin{axis}[height=\aspectratio*\plotimsized\textwidth,width=\plotimsized\textwidth, ymin=-1.0, ymax=1.0,xmax=0,xmin=-54.36,legend columns=3,legend style={draw=\legendboxdraw,fill=\legendboxfill,at={(0.5,1.05)},anchor=south},legend cell align={left},
			xlabel={$\lambda$}, ylabel={},label style={font=\normalsize},tick label style={font=\normalsize},legend image post style={scale=\legendmarkscale},legend style={nodes={scale=\legendfontscale, transform shape}},grid=none]
			\addplot[color=\colorone,line width=\plotlinewidth pt,mark=\markone,mark repeat=24,mark phase=0,mark size=\plotmarksizeu pt] table [x=lambdas,y=Rsm1,col sep=comma] 
			{data/text/R_5_3_damp_1.csv};\addlegendentry{$\zeta=-\ell^\varepsilon_s$}
			\addplot[color=\colortwo,line width=\plotlinewidth pt,mark=\marktwo,mark repeat=24,mark phase=8,mark size=\plotmarksizeu pt] table [x=lambdas,y=Rsm2,col sep=comma] 
			{data/text/R_5_3_damp_1.csv};\addlegendentry{$\zeta=-\ell^\varepsilon_s/2$}
			\addplot[color=\colorthree,line width=\plotlinewidth pt,mark=\markthree,mark repeat=24,mark phase=16,mark size=\plotmarksizeu pt] table [x=lambdas,y=Rsm3,col sep=comma] 
			{data/text/R_5_3_damp_1.csv};\addlegendentry{$\zeta=0$}
			\end{axis}
			\end{tikzpicture}
			\caption{High damping $\varepsilon=1$.}
			\label{fig:Rsmb}
		\end{subfigure}%
	\caption{Stability polynomial $R_{s,m}(\lambda,\zeta,\tau,\eta)$ of the mRKC method vs. $\lambda$ for $\zeta=-\ell_s^\varepsilon,-\ell_s^\varepsilon/2$ or $0$ and $s=5$, $m=3$, $\tau=1$, $\eta$ as in \cref{eq:defsmetascalar} and damping $\varepsilon=0.05$ (left) or $\varepsilon=1$ (right).}
	\label{fig:Rsm}
\end{figure}
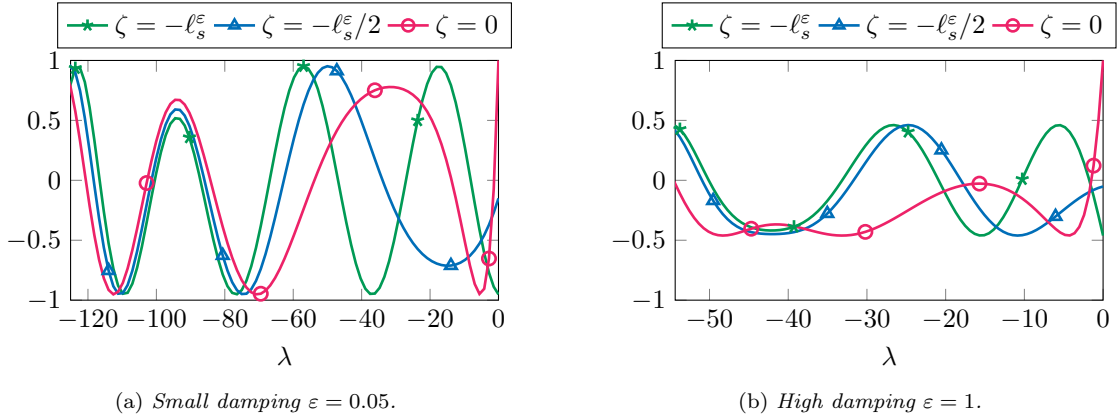

\subsubsection*{Stability analysis for a $2\times 2$ model problem}
Here, we consider a $2\times 2$ linear model problem where the Jacobians of $\ff$ and $\fs$ are not simultaneously triangularizable. Then, the stability analysis cannot be reduced to the scalar multirate test equation \eqref{eq:mtesteq}, yet we shall show that the same stability conditions still hold. Moreover, we introduce a coupling term between the fast and slow variables and show that the same stability conditions are necessary even when the coupling is weak.

Thus, we consider the system of differential equations
\begin{align}\label{eq:twodim}
y'&=Ay, &\mbox{with} && A&=\begin{pmatrix}
\zeta & \sigma\\ \sigma & \lambda
\end{pmatrix}
\end{align}
and $y(0)=y_0\in\Rb^2$. We let $\lambda,\zeta<0$, $\sigma\in\Rb$ the coupling term, and assume that $\sigma^2\leq\lambda\zeta$ to ensure that both eigenvalues of $A$ are negative or zero. We note $D\in\Rb^{2\times 2}$ the diagonal matrix satisfying $D_{11}=0$ and $D_{22}=1$ and consider the splitting defined by $\ff(y)=A_F\, y$ and $\fs(y)=A_S\, y$, where
\begin{align} \label{eq:tauAfAc1}
A_F &:=DA = \begin{pmatrix}
0 & 0 \\ \sigma & \lambda
\end{pmatrix},
& 
A_S &:= (I-D)A =\begin{pmatrix}
\zeta & \sigma \\ 0 & 0
\end{pmatrix}.
\end{align}
Observe that $\rhof=|\lambda|$ and $\rhos=|\zeta|$. The matrices $A_F,A_S$ are simultaneously triangularizable if, and only if, they have a common eigenvector, which occurs only for $\sigma=0$ or $\sigma^2=\lambda\zeta$. We set $\sigma=0.1\sqrt{\lambda\zeta}$, so that the present stability analysis cannot be reduced to the scalar multirate test equation \cref{eq:mtesteq}. Furthermore, as the eigenvalues of $A$ are negative or zero for all $|\sigma|\leq \sqrt{\lambda\zeta}$, the current coupling $\sigma=0.1\sqrt{\lambda\zeta}$ can be considered to be weak when compared to the maximal coupling $\sqrt{\lambda\zeta}$.

Given $y\in \Rb^2$, we obtain $\bfe(y)$ by replacing $\lambda,\zeta$ in \eqref{eq:bfetesteq} by $A_F,A_S$, respectively. This yields
\begin{align}\label{eq:defAeta}
\bfe(y) &= A_\eta y, &\mbox{with}&& A_\eta&=\Ps_m(\eta A_F)A y,
\end{align}
and since $\Ps_m$ is a polynomial, $A_\eta$ is well-defined. From \eqref{eq:defmrkc} it follows $y_{n+1} = \Pe_s(\tau A_\eta)y_n$. If the eigenvalues of $\tau A_\eta$ are in the interval $[-\ell_s^\varepsilon,0]$, the mRKC method is stable. For convenience, we set $\tau=1$, $|\zeta|=\ell_s^\varepsilon$ with $s=10$, and also fix $m=8$ and $\eta=\frac{6\tau}{\ell_s^\varepsilon}\frac{m^2}{m^2-1}$ (as in \eqref{eq:defsmetascalar}). Then, the mRKC method is stable if the spectral radius $\rho_\eta$ of $A_\eta$ satisfies $\rho_\eta\leq |\zeta|$ for all $\eta\lambda\in [-\ell_m^\varepsilon,0]$, or equivalently $\eta\rhoe\leq \eta|\zeta|= |w|$. 

In \cref{fig:stabweakcoupa}, we display $\eta\rhoe$ as a function of $z=\eta\lambda\in [-\ell_m^\varepsilon,0]$ and observe that $\eta\rhoe\leq |w|$; thus, the mRKC scheme is stable. Hence, the stability conditions \eqref{eq:defsmeta} guarantee stability of the scheme even though the Jacobians of $\ff,\fs$ are not simultaneously triangularizable.

Next, in \cref{fig:stabweakcoupb}, we consider a value of $\eta$ smaller than that dictated by \eqref{eq:defsmetascalar}. For $\overline\eta=0.9\eta$, we again display $\overline\eta\rho_{\overline\eta}$ as a function of $\overline z=\overline\eta\lambda\in [-\ell_m^\varepsilon,0]$. Then, a small region of instability appears for $\overline z$ close to zero, where $\overline\eta\rho_{\overline\eta}>|\overline w|$. Hence, the stability conditions \eqref{eq:defsmetascalar} are necessary even for systems of equations with a weak coupling $\sigma=0.1\sqrt{\lambda\zeta}$, where $\sqrt{\lambda\zeta}$ corresponds to the maximal coupling strength.
Similar instabilities as in \cref{fig:stabweakcoupb} occur for even weaker couplings
$\sigma=0.01\sqrt{\lambda\zeta}$, $\sigma=0.001\sqrt{\lambda\zeta}$ and for larger $\overline\eta=0.95\eta$.

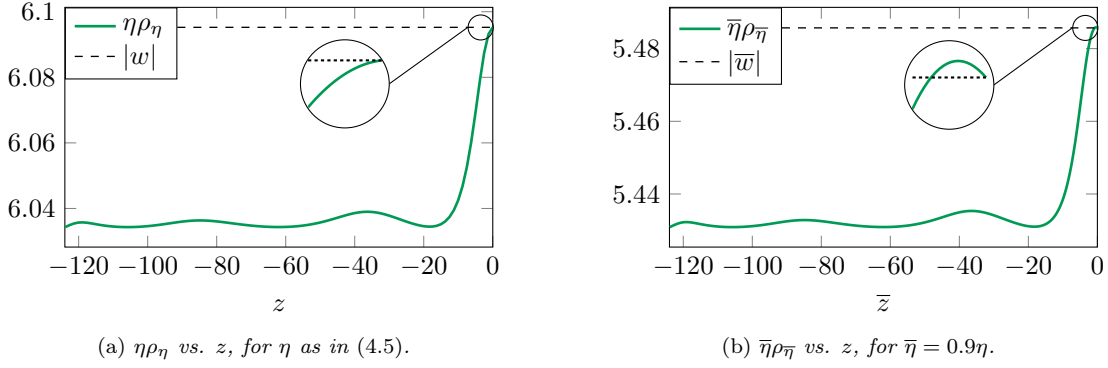
\begin{figure}
		\begin{subfigure}[t]{\subfigsize\textwidth}
			\centering
			\begin{tikzpicture}[scale=\plotimscale,every pin/.style={fill=white}]
			\begin{axis}[height=\aspectratio*\plotimsized\textwidth,width=\plotimsized\textwidth,xmax=0,xmin=-124,legend columns=1,legend style={draw=\legendboxdraw,fill=\legendboxfill,at={(0,1)},anchor=north west},legend cell align={left},
			xlabel={$\phantom{\overline z}z\phantom{\overline z}$}, ylabel={},label style={font=\normalsize},tick label style={font=\normalsize},legend image post style={scale=\legendmarkscale},legend style={nodes={scale=\legendfontscale, transform shape}},grid=none]
			\addplot[color=\colorone,line width=\plotlinewidth pt,mark=none] table [x=etalambda,y=rhoB,col sep=comma] 
			{data/text/stab_weak_coupling_a.csv};\addlegendentry{$\eta\rhoe$}
			\addplot[color=black,dashed,line width=\plotdashedlinewidth pt] table [x=etalambda,y=w,col sep=comma] 
			{data/text/stab_weak_coupling_a.csv};\addlegendentry{$|w|$}
			\coordinate (pta) at (axis cs:-7.3,6.0952);
			\coordinate (ptb) at (axis cs:-30,6.078);
			\end{axis}
			\draw[] (pta)node[circle,anchor=west,draw]{}--(ptb)node[anchor=east,draw,circle,fill=white,inner sep=0,outer sep=0]{
				\begin{tikzpicture}[scale=\zoomsize,trim axis left,trim axis right]
				\begin{axis}[tiny,hide axis,xlabel={},ylabel={},ticks=none,xmin=-1,xmax=0]
				\addplot[color=\colorone,line width=\zoomlinewidth pt,mark=none] table [x=etalambda,y=rhoB,col sep=comma] 
				{data/text/stab_weak_coupling_a_zoom.csv};
				\addplot[color=black,dashed,line width=\zoomdashedlinewidth pt] table [x=etalambda,y=w,col sep=comma] 
				{data/text/stab_weak_coupling_a_zoom.csv};
				\end{axis}
				\end{tikzpicture}
			};
		\end{tikzpicture}
		\caption{$\eta\rhoe$ vs. $z$, for $\eta$ as in \cref{eq:defsmetascalar}.}
		\label{fig:stabweakcoupa}
	\end{subfigure}\hfill%
	\begin{subfigure}[t]{\subfigsize\textwidth}
		\centering
		\begin{tikzpicture}[scale=\plotimscale,every pin/.style={fill=white}]
		\begin{axis}[height=\aspectratio*\plotimsized\textwidth,width=\plotimsized\textwidth,xmax=0,xmin=-124,legend columns=1,legend style={draw=\legendboxdraw,fill=\legendboxfill,at={(0,1)},anchor=north west},legend cell align={left},
		xlabel={$\overline z$}, ylabel={},label style={font=\normalsize},tick label style={font=\normalsize},legend image post style={scale=\legendmarkscale},legend style={nodes={scale=\legendfontscale, transform shape}},grid=none]
		\addplot[color=\colorone,line width=\plotlinewidth pt,mark=none] table [x=etalambda,y=rhoB,col sep=comma] 
		{data/text/stab_weak_coupling_b.csv};\addlegendentry{$\overline\eta\rho_{\overline\eta}$}
		\addplot[color=black,dashed,line width=\plotdashedlinewidth pt] table [x=etalambda,y=w,col sep=comma] 
		{data/text/stab_weak_coupling_b.csv};\addlegendentry{$|\overline w|$}
		\coordinate (ptc) at (axis cs:-7.3,5.4857);
		\coordinate (ptd) at (axis cs:-30,5.47);
		\end{axis}
		\draw[] (ptc)node[circle,anchor=west,draw]{}--(ptd)node[anchor=east,draw,circle,fill=white,inner sep=0,outer sep=0]{
			\begin{tikzpicture}[scale=\zoomsize,trim axis left,trim axis right]
			\begin{axis}[tiny,hide axis,xlabel={},ylabel={},ticks=none,xmin=-1,xmax=0]
			\addplot[color=\colorone,line width=\zoomlinewidth pt,mark=none] table [x=etalambda,y=rhoB,col sep=comma] 
			{data/text/stab_weak_coupling_b_zoom.csv};
			\addplot[color=black,dashed,line width=\zoomdashedlinewidth pt] table [x=etalambda,y=w,col sep=comma] 
			{data/text/stab_weak_coupling_b_zoom.csv};
			\end{axis}
			\end{tikzpicture}
		};
	\end{tikzpicture}
	\caption{$\overline\eta\rho_{\overline\eta}$ vs. $z$, for $\overline\eta=0.9 \eta$.}
	\label{fig:stabweakcoupb}
\end{subfigure}%
\caption{Verification that conditions \cref{eq:defsmetascalar} are sufficient and necessary for the stability of mRKC applied to the $2\times 2$ test problem \cref{eq:twodim}.}
\label{fig:stabweakcoup}
\end{figure}

\subsection{Convergence analysis}
We end this section by proving that the mRKC scheme is first-order accurate. 
\begin{theorem}\label{thm:firstorder}
The mRKC scheme is first-order accurate. 
\end{theorem}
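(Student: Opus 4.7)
The plan is to interpret the mRKC update \cref{eq:defmrkc} as one step of the standard first-order $s$-stage RKC scheme applied not to $y'=f(y)$ but to the perturbed problem $y'=\bfe(y)$. Once this point of view is adopted, convergence follows from the usual route of combining a local truncation error of order $\tau^{2}$ with the stability estimates of \cref{sec:stabanalysis} through a discrete Gronwall argument. The heart of the proof is therefore to quantify how far $\bfe$ is from $f$ and to transfer this gap to the global error.

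The first step would be to prove $\bfe(y)=f(y)+\bigo{\tau}$ uniformly in the stiffness, which I would split as
\begin{equation}
\bfe(y)-f(y)=\bigl(\bfe(y)-\fe(y)\bigr)+\bigl(\fe(y)-f(y)\bigr).
\end{equation}
For the continuous piece, inserting the Taylor expansion $u(s)=y+s\,f(y)+\bigo{s^{2}}$ of the exact auxiliary flow into the integral representation \cref{eq:deffeint} immediately yields $\fe(y)=f(y)+\bigo{\eta}$, with a constant depending only on bounds of the first derivatives of $\ff,\fs$. For the discrete piece, since $u_\eta$ is produced by one step of the first-order $m$-stage RKC scheme applied, stably thanks to $\eta\rhof\leq\beta m^{2}$, to the auxiliary equation \cref{eq:defu} with step size $\eta$, the classical local error bound gives $\nld{u_\eta-u(\eta)}=\bigo{\eta^{2}}$ and thus, via \cref{eq:defbfe}, $\bfe(y)-\fe(y)=\bigo{\eta}$. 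Combining both estimates with $\eta\leq 8\tau/\beta$, which follows from the definition of $\eta$ in \cref{eq:defsmeta} together with $s\geq 1$ and $m\geq 2$, produces the desired $\bigo{\tau}$ perturbation.

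With this perturbation bound at hand, the local truncation error is obtained in the standard way: first-order consistency of the outer RKC at $y(\tn)$ gives
\begin{equation}
\ynpu=y(\tn)+\tau\,\bfe(y(\tn))+\bigo{\tau^{2}}=y(\tn)+\tau f(y(\tn))+\bigo{\tau^{2}}=y(\tnpu)+\bigo{\tau^{2}},
\end{equation}
and a discrete Gronwall argument, combined with the fact that $\bfe$ is Lipschitz with a constant bounded in terms of those of $\ff,\fs$ and of the inner stability polynomial, upgrades this to $\nld{\yn-y(\tn)}=\bigo{\tau}$.

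The main obstacle I anticipate is ensuring that the constants in the two RKC local error expansions are genuinely uniform in $\rhof$ and $\rhos$, as long as the stability conditions $\tau\rhos\leq\beta s^{2}$ and $\eta\rhof\leq\beta m^{2}$ are enforced. Concretely, the internal stages $k_{j}$ and $u_{j}$ must be controlled independently of the spectral radii, and the bounds must then propagate cleanly through the nonlinear dependence of $u_\eta$ on $y$ entering the outer scheme. This is the standard but delicate ingredient in the analysis of stabilized Runge--Kutta methods, and I would adapt the arguments of \cite{VHS90,HoS80} to the two-level structure inherent to the mRKC scheme.
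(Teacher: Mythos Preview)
Your approach is essentially the paper's: it too decomposes $\bfe-f$ into $(\bfe-\fe)+(\fe-f)$, bounds each piece by $\bigo{\eta}$ via a Taylor expansion of $u(\eta)$ and first-order accuracy of the inner RKC, invokes $\eta\leq 8\tau$ from \cref{eq:defsmeta}, and concludes with an $\bigo{\tau^2}$ local error (the only cosmetic difference is that the paper routes the estimate through the intermediate exact flows $\ye$ and $\bar y_\eta$ of $y'=\fe(y)$ and $y'=\bfe(y)$ and a three-term triangle inequality, rather than first collapsing $\bfe-f$ as you do). One remark: the paper does \emph{not} pursue the stiffness-uniformity question you flag as the main obstacle---its $\bigo{\cdot}$ constants are classical, depending on derivative bounds of $\ff,\fs$---so the refinement you anticipate having to carry out is not part of \cref{thm:firstorder} as stated and proved.
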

\begin{proof}
We estimate the local error after one step. From \cref{def:fe} with $y$ replaced by $y_0$ in \cref{eq:defu} follows $u(\eta) = y_0 + \eta f(y_0)+\bigo{\eta^2}$ and thus $\fe(y_0) = f(y_0)+\bigo{\eta}$.
Let $y(\tau)$ and $\ye(\tau)$ be the solutions of \cref{eq:ode,eq:odemod} at time $\tau$, respectively, then 
\begin{equation}
	y(\tau)-\ye(\tau) = \tau(f(y_0)-\fe(y_0))+\bigo{\tau^2} = \bigo{\eta\tau+\tau^2}.
\end{equation}
Let $\bar y_\eta(\tau)$ be the solution of $\bar y_\eta' = \bfe(\bar y_\eta)$ with $\bar y_\eta(0)=y_0$ and $\bfe$ as in \cref{eq:defbfe,eq:defbue}. Since the RKC scheme \cref{eq:defbue} is first-order accurate then $u_\eta=u(\eta)+\bigo{\eta^2}$ and $\bfe(y_0)=\fe(y_0)+\bigo{\eta}$, which yields
\begin{equation}
	\ye(\tau) -\bar y_\eta(\tau) = \tau(\fe(y_0)-\bfe(y_0))+\bigo{\tau^2} = \bigo{\eta\tau+\tau^2}.
\end{equation}
Finally, let $y_1$ be the solution after one step of the mRKC scheme \cref{eq:defmrkc,eq:defbfe,eq:defbue}, which can also be seen as the solution after one step of the RKC scheme applied to $\bar y_\eta' = \bfe(\bar y_\eta)$. Using the fact that the RKC scheme \cref{eq:defmrkc} has first-order accuracy then
\begin{equation}
	\bar y_\eta(\tau) -y_1 =  \bigo{\tau^2}.
\end{equation}
By triangular inequality we obtain $|y(\tau)-y_1|= \bigo{\eta\tau+\tau^2}$ and from \cref{eq:defsmeta} follows $\eta\leq 8\tau$, thus $|y(\tau)-y_1|= \bigo{\tau^2}$ and the scheme is first-order accurate.
\end{proof}
Typically $s\gg 1$, i.e. $\eta\ll\tau$, and the error made when approximating $f$ by the averaged force $\fe$ is negligible. In fact, we observe that the difference between the RKC and the mRKC solutions in our numerical experiments in \cref{sec:numexp} is always very small.

\section{Numerical Experiments}\label{sec:numexp}
In this section we compare the mRKC scheme from \cref{sec:fdrkcsq} against the classical RKC method of \cref{sec:rkc} through a series of experiments. First, we apply mRKC to a stiff nonlinear dynamical system to verify convergence in the standard ``ODE sense'' and underpin its efficiency. Then, we apply mRKC to the heat equation and verify convergence in the ``PDE sense'', i.e. when both the mesh size $H$ and the time step $\tau$ decrease simultaneously. In the third experiment, we compare the performance and efficiency of the mRKC and RKC schemes when applied to a linear diffusion problem in complex geometry; here, we also compare mRKC to a second-order accurate RKC scheme (RKC2) \cite{SSV98,VHS90} and the implicit Euler method. In the fourth experiment, we apply the mRKC scheme with the RKC, RKC2 and the implicit Euler method to a nonlinear integro-differential problem.
Finally, we study numerically the stability of mRKC when it is applied to various advection-diffusion-reaction problems.

Both the RKC and mRKC methods need bounds on the spectral radii of the Jacobians of $\ff$ and $\fs$
 to determine the number of stages $s,m$ needed for stability. In our experiments, we estimate them with a cheap nonlinear power method \cite{Lin72,Ver80}. The numerical experiments in \cref{sec:exp_conv2,sec:exp_eff2,sec:exp_stab} were performed using the C++ library \texttt{libMesh} \cite{KPS06}, while for the experiments of \cref{sec:exp_conv0,sec:intdiff} we used the \texttt{Eigen} library \cite{GuB10}.

\subsection{Robertson's stiff test problem}\label{sec:exp_conv0}
First, we study the convergence of the mRKC scheme on a popular stiff test problem, Robertson's nonlinear chemical reaction model \cite{EHL75,HaW02}:
\begin{equation}\label{eq:robertson}
\begin{aligned}
y_1' =& -0.04\, y_1+10^4\, y_2\, y_3, & y_1(0)=&1, \\
y_2' =& \,0.04\, y_1-10^4\, y_2\, y_3-3\cdot 10^7\, y_2^2,\qquad \qquad & y_2(0)=&2\cdot 10^{-5},\\
y_3' =& \,3\cdot 10^7\, y_2^2 , & y_3(0)=& 10^{-1},
\end{aligned}
\end{equation}
where $t\in [0,100]$. With this set of parameters and initial conditions, the only term inducing severe stiffness is $-10^4\,y_2\,y_3$. Thus, we let
\begin{align}
\ff(y)=& \begin{pmatrix}
0 \\ -10^4\, y_2\, y_3\\ 0
\end{pmatrix}, &
\fs(y) =& \begin{pmatrix}
-0.04\, y_1+10^4\, y_2\, y_3\\ 0.04\, y_1-3\cdot 10^7\, y_2^2 \\ 3\cdot 10^7\, y_2^2
\end{pmatrix}, &
f(y)=&\ff(y)+\fs(y).
\end{align}
Now, we solve \eqref{eq:robertson} either with the RKC or the mRKC scheme using step sizes $\tau = 1/2^k$, $k=0,\ldots,7$. For comparison, we use a reference solution obtained with the standard fourth-order Runge--Kutta scheme using $\tau=10^{-4}$. In \cref{fig:conv0_err}, we observe that both the RKC and the mRKC method achieve first-order convergence. In fact, both errors are hardly distinguishable, indicating that the error introduced by the approximation of $f$ by $\fe$ is negligible. We observe in \cref{fig:conv0_eta} that the mean value of $\eta$ during integration is indeed considerably smaller than $\tau$.
\begin{figure}
		\begin{subfigure}[t]{\subfigsize\textwidth}
			\centering
			\begin{tikzpicture}[scale=\plotimscale]
			\begin{loglogaxis}[height=\aspectratio*\plotimsized\textwidth,width=\plotimsized\textwidth,legend columns=1,legend style={draw=\legendboxdraw,fill=\legendboxfill,at={(1,1)},anchor=north east},log basis y={10},log basis x={2},legend cell align={left},x dir=reverse,
			xlabel={$\tau$}, ylabel={$\ell_\infty$ error},label style={font=\normalsize},tick label style={font=\normalsize},legend image post style={scale=\legendmarkscale},legend style={nodes={scale=\legendfontscale, transform shape}}]
			\addplot[color=\localcolor,solid,line width=\plotlinewidth pt,mark=\localmark,mark size=\plotmarksizeu pt] table [x=dt,y=err,col sep=comma] 
			{data/robertson/res_mRKC.csv};\addlegendentry{mRKC}
			\addplot[color=\classicalcolor,line width=\plotlinewidth pt,mark=\classicalmark,mark size=\plotmarksized pt] table [x=dt,y=err,col sep=comma] 
			{data/robertson/res_RKC.csv};\addlegendentry{RKC}
			\addplot[black,dashed,line width=\plotdashedlinewidth pt,domain=0.0078125:1] (x,0.00025*x);\addlegendentry{$\bigo{\tau}$}
			\end{loglogaxis}
			\end{tikzpicture}
			\caption{$\ell_\infty$-error vs. the step size $\tau$.}
			\label{fig:conv0_err}
		\end{subfigure}\hfill%
		\begin{subfigure}[t]{\subfigsize\textwidth}
			\centering
			\begin{tikzpicture}[scale=\plotimscale]
			\begin{loglogaxis}[height=\aspectratio*\plotimsized\textwidth,width=\plotimsized\textwidth,legend columns=1,legend style={draw=\legendboxdraw,fill=\legendboxfill,at={(1,1)},anchor=north east},log basis y={10},log basis x={2},legend cell align={left},x dir=reverse,
			xlabel={$\tau$}, ylabel={$[t]$},label style={font=\normalsize},tick label style={font=\normalsize},legend image post style={scale=\legendmarkscale},legend style={nodes={scale=\legendfontscale, transform shape}}]
			\addplot[color=\colorone,solid,line width=\plotlinewidth pt,mark=\markone,mark size=\plotmarksizeu pt] table [x=dt,y=eta,col sep=comma] 
			{data/robertson/res_mRKC.csv};\addlegendentry{Mean of $\eta$}
			\addplot[color=\colortwo,line width=\plotlinewidth pt,mark=\marktwo,mark size=\plotmarksized pt] table [x=dt,y=dt,col sep=comma] 
			{data/robertson/res_mRKC.csv};\addlegendentry{$\tau$}
			\end{loglogaxis}
			\end{tikzpicture}
			\caption{Comparing the size of $\eta$ and $\tau$.}
			\label{fig:conv0_eta}
		\end{subfigure}%
	\caption{Robertson's stiff test problem. Convergence and comparison of $\eta$ against $\tau$.}
	\label{fig:conv0}
\end{figure}
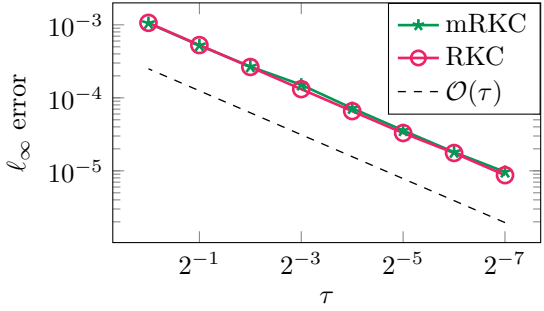
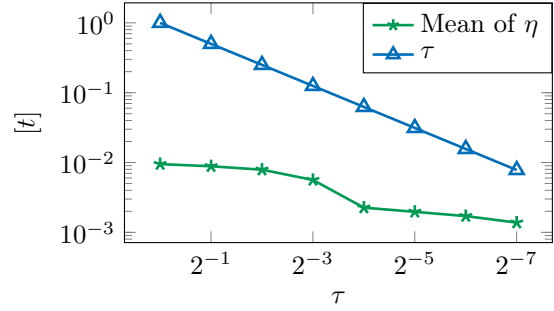

Next, we compare the two schemes for a fixed step size $\tau=1$. In \cref{fig:conv0_stages}, we display the number of stages taken by the mRKC and the RKC method at each time step with respect to $t\in [0,100]$. Moreover, \cref{fig:conv0_rho} depicts the evolution of the spectral radii $\rho,\rhof,\rhos$ of the Jacobians of $f,\ff,\fs$, respectively. We observe that $\rhos$ decreases with time and consequently the mRKC scheme decreases the number $s$ of expensive function evaluations $\fs$ per step.
In contrast, $\rho$ increases and thus the RKC scheme must increase the number $s$ of $\fs$ function evaluations, although this term does not introduce any stiffness; indeed, $\rho$ increases only because of the term contained in $\ff$.
Finally, we notice in \cref{fig:conv0_stages} that the mRKC scheme increases the number $m$ of (cheap) function evaluations $\ff$ because of the increase in $\rhof$ and $\eta$; indeed, $\eta$ also increases due to the decrease in $s$ and \eqref{eq:defsmeta}. This added cost, however, is much smaller than that from the many additional (expensive) evaluations of $\fs$ required by the RKC method.
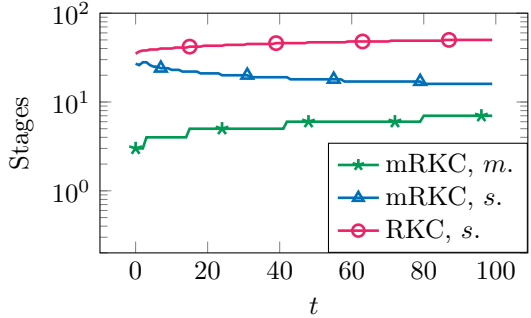
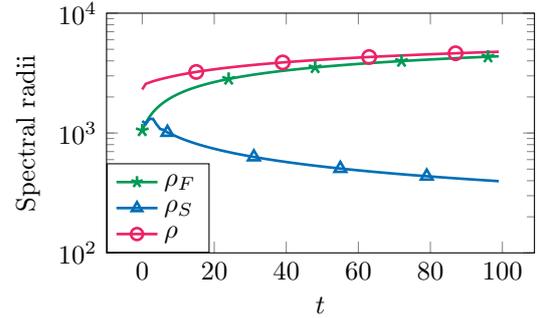
\begin{figure}
		\begin{subfigure}[t]{\subfigsize\textwidth}
			\centering
			\begin{tikzpicture}[scale=\plotimscale]
			\begin{semilogyaxis}[height=\aspectratio*\plotimsized\textwidth,width=\plotimsized\textwidth,legend columns=1,legend style={draw=\legendboxdraw,fill=\legendboxfill,at={(1,0)},anchor=south east},log basis y={10},log basis x={2},legend cell align={left},ymin=0.2,ymax=100,
			xlabel={$t$}, ylabel={Stages},label style={font=\normalsize},tick label style={font=\normalsize},legend image post style={scale=\legendmarkscale},legend style={nodes={scale=\legendfontscale, transform shape}},grid=none]
			\addplot[color=\localcolorF,solid,line width=\plotlinewidth pt,mark=\localmarkF,mark size=\plotmarksizeu pt,mark repeat=24,mark phase=0] table [x=t,y=m,col sep=comma] 
			{data/robertson/rho_sm_mRKC.csv};\addlegendentry{mRKC, $m$.}
			\addplot[color=\localcolorS,solid,line width=\plotlinewidth pt,mark=\localmarkS,mark size=\plotmarksizeu pt,mark repeat=24,mark phase=8] table [x=t,y=s,col sep=comma] 
			{data/robertson/rho_sm_mRKC.csv};\addlegendentry{mRKC, $s$.}
			\addplot[color=\classicalcolor,line width=\plotlinewidth pt,mark=\classicalmark,mark size=\plotmarksizeu pt,mark repeat=24,mark phase=16] table [x=t,y=s,col sep=comma] 
			{data/robertson/rho_s_RKC.csv};\addlegendentry{RKC, $s$.}
			\end{semilogyaxis}
			\end{tikzpicture}
			\caption{Stages needed by RKC and mRKC vs. time $t$, for a fixed step size $\tau=1$.}
			\label{fig:conv0_stages}
		\end{subfigure}\hfill%
		\begin{subfigure}[t]{\subfigsize\textwidth}
			\centering
			\begin{tikzpicture}[scale=\plotimscale]
			\begin{semilogyaxis}[height=\aspectratio*\plotimsized\textwidth,width=\plotimsized\textwidth,legend columns=1,legend style={draw=\legendboxdraw,fill=\legendboxfill,at={(0,0)},anchor=south west},log basis y={10},legend cell align={left},ymin=100,ymax=10000,
			xlabel={$t$}, ylabel={Spectral radii},label style={font=\normalsize},tick label style={font=\normalsize},legend image post style={scale=\legendmarkscale},legend style={nodes={scale=\legendfontscale, transform shape}},grid=none]
			\addplot[color=\localcolorF,solid,line width=\plotlinewidth pt,mark=\localmarkF,mark size=\plotmarksizeu pt,mark repeat=24,mark phase=0] table [x=t,y=rhoF,col sep=comma] 
			{data/robertson/rho_sm_mRKC.csv};\addlegendentry{$\rhof$}
			\addplot[color=\localcolorS,solid,line width=\plotlinewidth pt,mark=\localmarkS,mark size=\plotmarksizeu pt,mark repeat=24,mark phase=8] table [x=t,y=rhoS,col sep=comma] 
			{data/robertson/rho_sm_mRKC.csv};\addlegendentry{$\rhos$}
			\addplot[color=\classicalcolor,line width=\plotlinewidth pt,mark=\classicalmark,mark size=\plotmarksizeu pt,mark repeat=24,mark phase=16] table [x=t,y=rho,col sep=comma] 
			{data/robertson/rho_s_RKC.csv};\addlegendentry{$\rho$}
			\end{semilogyaxis}
			\end{tikzpicture}
			\caption{Evolution of the spectral radii $\rho$, $\rhof$, $\rhos$ vs. time $t$.}
			\label{fig:conv0_rho}
		\end{subfigure}%
	\caption{Robertson's stiff test problem. Comparison of spectral radii and number of stages taken by the mRKC and the RKC scheme.}
	\label{fig:conv0_rho_stages}
\end{figure}

\subsection{Heat equation in the unit square}\label{sec:exp_conv2}
Next, we verify the space-time convergence properties of the mRKC method. To do so, we consider the heat equation in the unit square $\Omega=[0,1]\times [0,1]$,
\begin{equation}\label{eq:par1}
\begin{aligned}
\dt u-\Delta u &=g \qquad &&\text{in }\Omega\times [0,T],\\
u&=0 &&\text{in } \partial\Omega \times [0,T],\\
u&=0 &&\text{in } \Omega\times\{0\},
\end{aligned}
\end{equation}
where $T=1/2$ and $g$ is chosen such that $u(\bx,t)=\sin(\pi x_1)^2\sin(\pi x_2)^2\sin(\pi t)^2$ is the exact solution. 

Starting from a mesh of $2^j\times 2^j$ simplicial elements with $j=2,\ldots,5$, we locally refine twice all the elements inside the square $\OF=(1/4,3/4)\times (1/4,3/4)$. Each refinement step is performed by splitting all edges of any simplex, i.e. every triangle is split into four self-similar children. 
Let $\mathcal{M}$ be the set of elements in the mesh and $\mathcal{M}_F=\{T\in\mathcal{M}\,:\, \overline{T}\cap \overline{\Omega}_F\neq \emptyset\}$ the set of refined elements or their direct neighbors. Then $h=H/4$ is the diameter of the elements inside of $\OF$, with $H$ the diameter of the elements outside of $\OF$. 

Next, we discretize \eqref{eq:par1} in space with first-order DG-FE \cite{PiE12} on the mesh $\mathcal{M}$. After inverting the block-diagonal mass matrix, the resulting system is 
\begin{equation}
y'=A\, y+G,  \qquad\qquad y(0)=y_0,
\end{equation}
where $A\in\Rb^{N\times N}$ and $G\in C([0,T],\Rb^N)$ corresponds to the spatial discretization of $g(\cdot,t)$. Let $D\in\Rb^{N\times N}$ be a diagonal matrix with $D_{ii}=1$ if the $i$th degree of freedom belongs to an element in $\mathcal{M}_F$ and $D_{ii}=0$ otherwise. We also introduce 
\begin{align}\label{eq:splitpar}
A_F=&DA, & A_S=&(I-D)A &\mbox{and} &&\ff(y)=&A_F\, y, & \fs(t,y)=&A_S\, y+G(t),
\end{align}
with $I$ the identity. It is well-known that the spectral radii $\rhos$ and $\rhof$ of $A_S$ and $A_F$ behave as $\bigo{1/H^2}$ and $\bigo{1/h^2}=\bigo{16/H^2}$, respectively. 

We now consider a sequence of meshes with $j=2,\ldots,5$ and solve \eqref{eq:par1} either with
the mRKC or the RKC scheme using the same step size $\tau=1/2^j$. The parameters $s$ and $m$ for mRKC are chosen according to \eqref{eq:defsmetaweak}. In \cref{fig:conv2}, we display the $H^1(\Omega)$ errors at final time for mRKC and RKC. Both methods yield space-time first-order convergence and result in similar errors. In \cref{fig:stages}, we show the number of stages needed by RKC and mRKC. For both schemes, $s$ increases as the mesh size $H$ decreases, but for mRKC, $s$ is much smaller, since it only depends on the coarse elements, while $m$ remains constant due to the constant ratio between $\rhof$ and $\rhos$.
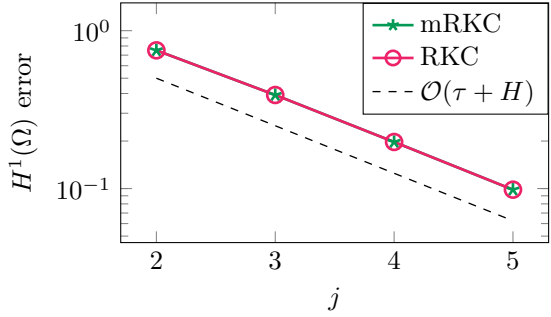
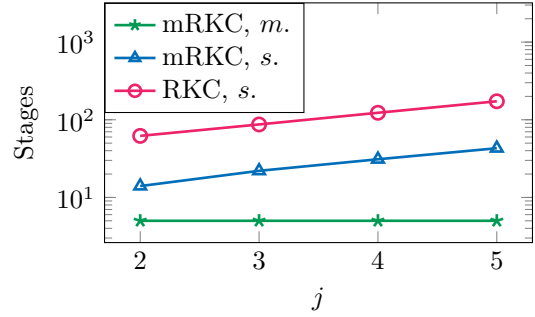
\begin{figure}
		\begin{subfigure}[t]{\subfigsize\textwidth}
			\centering
			\begin{tikzpicture}[scale=\plotimscale]
			\begin{semilogyaxis}[height=\aspectratio*\plotimsized\textwidth,width=\plotimsized\textwidth,legend columns=1,legend style={draw=\legendboxdraw,fill=\legendboxfill,at={(1,1)},anchor=north east},log basis y={10},ymax=1.5,legend cell align={left},
			xlabel={$j$}, ylabel={$H^1(\Omega)$ error},label style={font=\normalsize},tick label style={font=\normalsize},legend image post style={scale=\legendmarkscale},legend style={nodes={scale=\legendfontscale, transform shape}}]
			\addplot[color=\localcolor,solid,line width=\plotlinewidth pt,mark=\localmark,mark size=\plotmarksizeu pt] table [x=k,y=errh1,col sep=comma] 
			{data/heat_eq/sol_mrkc.csv};\addlegendentry{mRKC}
			\addplot[color=\classicalcolor,line width=\plotlinewidth pt,mark=\classicalmark,mark size=\plotmarksized pt] table [x=k,y=errh1,col sep=comma] 
			{data/heat_eq/sol_rkc.csv};\addlegendentry{RKC}
			\addplot[black,dashed,line width=\plotdashedlinewidth pt,domain=2:5] (x,2/2^x);\addlegendentry{$\bigo{\tau+H}$}
			\end{semilogyaxis}
			\end{tikzpicture}
			\caption{Convergence of RKC and mRKC.}
			\label{fig:conv2}
		\end{subfigure}\hfill%
		\begin{subfigure}[t]{\subfigsize\textwidth}
			\centering
			\begin{tikzpicture}[scale=\plotimscale]
			\begin{semilogyaxis}[height=\aspectratio*\plotimsized\textwidth,width=\plotimsized\textwidth,legend columns=1,legend style={draw=\legendboxdraw,fill=\legendboxfill,at={(0,1)},anchor=north west},log basis y={10},ymax=3200,legend cell align={left},
			xlabel={$j$}, ylabel={Stages},label style={font=\normalsize},tick label style={font=\normalsize},legend image post style={scale=\legendmarkscale},legend style={nodes={scale=\legendfontscale, transform shape}},grid=none]
			\addplot[color=\localcolorF,solid,line width=\plotlinewidth pt,mark=\localmarkF,mark size=\plotmarksizeu pt] table [x=k,y=m,col sep=comma] 
			{data/heat_eq/sol_mrkc.csv};\addlegendentry{mRKC, $m$.}
			\addplot[color=\localcolorS,solid,line width=\plotlinewidth pt,mark=\localmarkS,mark size=\plotmarksizeu pt] table [x=k,y=s,col sep=comma] 
			{data/heat_eq/sol_mrkc.csv};\addlegendentry{mRKC, $s$.}
			\addplot[color=\classicalcolor,line width=\plotlinewidth pt,mark=\classicalmark,mark size=\plotmarksizeu pt] table [x=k,y=s,col sep=comma] 
			{data/heat_eq/sol_rkc.csv};\addlegendentry{RKC, $s$.}
			\end{semilogyaxis}
			\end{tikzpicture}
			\caption{Number of stages needed by RKC and mRKC vs. refinement level $j$.}
			\label{fig:stages}
		\end{subfigure}%
	\caption{Heat equation in the unit square. Space-time convergence and number of stages.}
	\label{fig:convstages}
\end{figure}

\subsection{Diffusion across a narrow channel}\label{sec:exp_eff2}
To illustrate the efficiency of the mRKC method in a situation where geometry constraints require local mesh refinement, we consider the heat equation
\begin{equation}\label{eq:par3}
\begin{aligned}
\dt u-\Delta u &=g \qquad &&\text{in }\Omega_\delta\times [0,T],\\
\nabla u\cdot \bm{n}&=0 &&\text{in } \partial\Omega_\delta \times [0,T],\\
u&=0 &&\text{in } \Omega_\delta\times\{0\},
\end{aligned}
\end{equation}
with $T=0.1$ inside $\Omega_\delta$, which consists of two $10\times 5$ rectangles linked by a narrow $\delta\times 0.05$ channel of width $\delta>0$, see \cref{fig:eff2_sol}. The right-hand side $g(\bx,t)=\sin(10\pi t)^2 e^{-5\Vert \bx-\bm{c}\Vert^2}$ corresponds to a smoothed Gaussian point source centered at $\bm{c}$ in the middle of the upper rectangle.
\begin{figure}
		\hfil%
		\begin{subfigure}[t]{\subfigsizemed\textwidth}
			\begin{tikzpicture}[spy using outlines= {circle, magnification=6, connect spies}]
			\coordinate (a) at (-2.19,0);
			\coordinate (b) at (1,0);
			\coordinate (spypoint) at (b);
			\coordinate (magnifyglass) at (2.2,-1.2);
			\node at (a) {\includegraphics[width=0.15\textwidth]{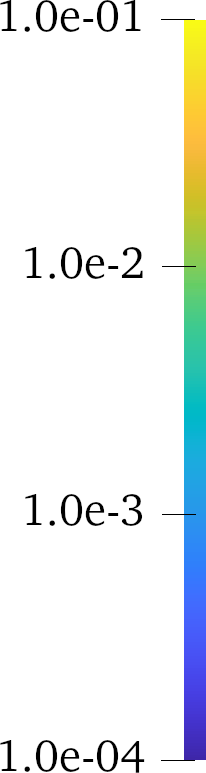}};
			\node at (b) {\includegraphics[width=0.64\textwidth]{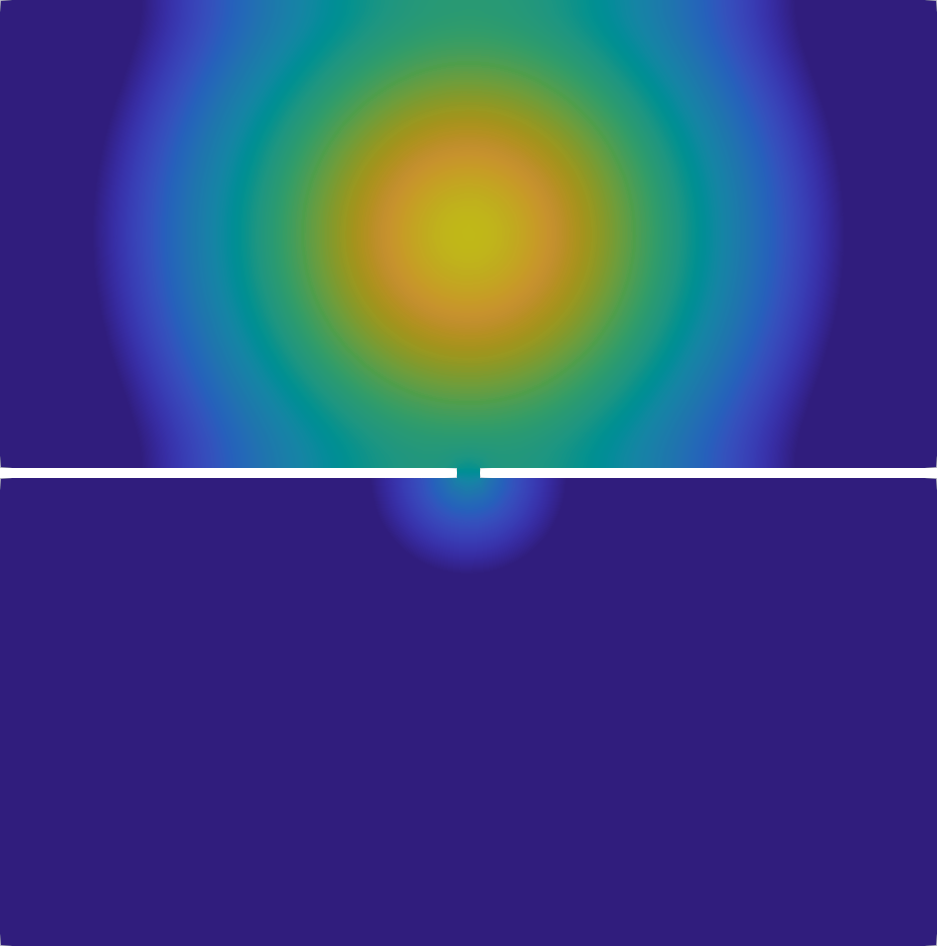}};
			\spy [WildStrawberry, size=2cm,magnification=6] on (spypoint) in node[fill=white] at (magnifyglass);
			\end{tikzpicture}
			\caption{Solution for $\delta=1/2^2$.}
			\label{fig:eff2_sola}
		\end{subfigure}\hfil%
		\begin{subfigure}[t]{\subfigsizemed\textwidth}
			\begin{tikzpicture}[spy using outlines= {circle, magnification=6, connect spies}]
			\coordinate (a) at (-2.19,0);
			\coordinate (b) at (1,0);
			\coordinate (spypoint) at (b);
			\coordinate (magnifyglass) at (2.2,-1.2);
			\node at (a) {\includegraphics[width=0.15\textwidth]{images/narrow_channel/bar.png}};
			\node at (b) {\includegraphics[width=0.64\textwidth]{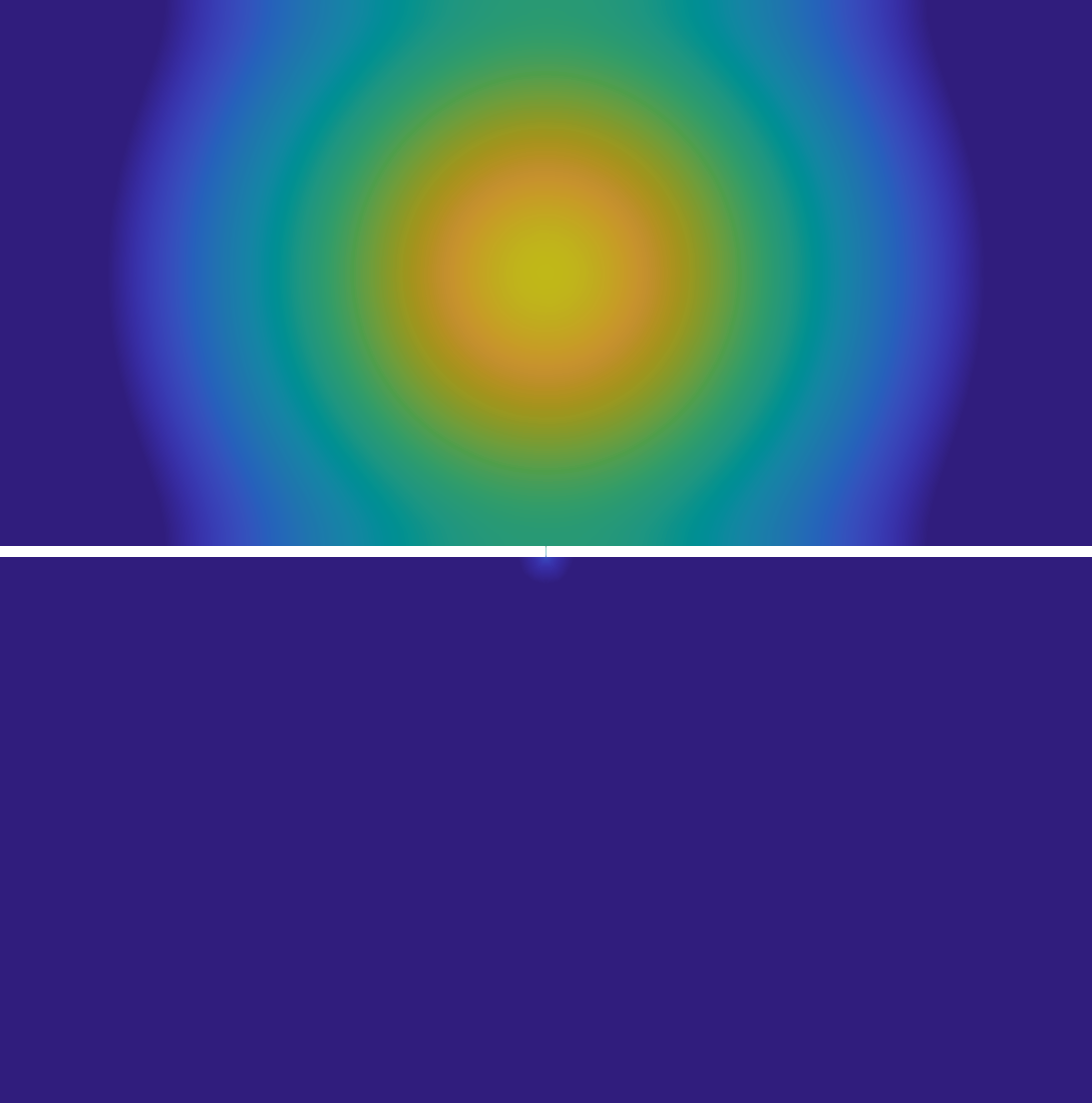}};
			\spy [WildStrawberry, size=2cm,magnification=6] on (spypoint) in node[fill=white] at (magnifyglass);
			\end{tikzpicture}
			\caption{Solution for $\delta=1/2^7$.}
			\label{fig:eff2_solb}
		\end{subfigure}%
	\hfil%
	\caption{Narrow channel. Numerical solutions of \cref{eq:par3} at $t=1$ using mRKC for a channel width $\delta=1/2^2$ or $\delta=1/2^7$.}
	\label{fig:eff2_sol}
\end{figure}

Inside $\Omega_\delta$, we use a Delaunay triangulation with maximal element size $H\approx 0.015$. As $\delta$ approaches zero, the elements inside the channel become increasingly smaller and the system stiffer. For each $\delta>0$, we define a neighborhood $\Omega_{F,\delta}\subset\Omega_\delta$ of the channel and $\mathcal{M}_F$, $A$, $A_F$, $A_S$, $\ff$, $\fs$ as in \cref{sec:exp_conv2}. Here, $\Omega_{F,\delta}$ is chosen such that the spectral radius of $A_S$ is almost independent of $\delta$ and only that of $A_F$ increases with decreasing $\delta$. Hence, $\Omega_{F,\delta}$ contains the channel together with all neighboring elements of mesh size smaller than $H$, see \cref{fig:eff2_grida,fig:eff2_gridb}. 
\begin{figure}
		\hfil%
		\begin{subfigure}[t]{\subfigsizemed\textwidth}
			\centering
			\includegraphics[trim=0cm 0cm 0cm 0cm, clip, width=0.64\textwidth]{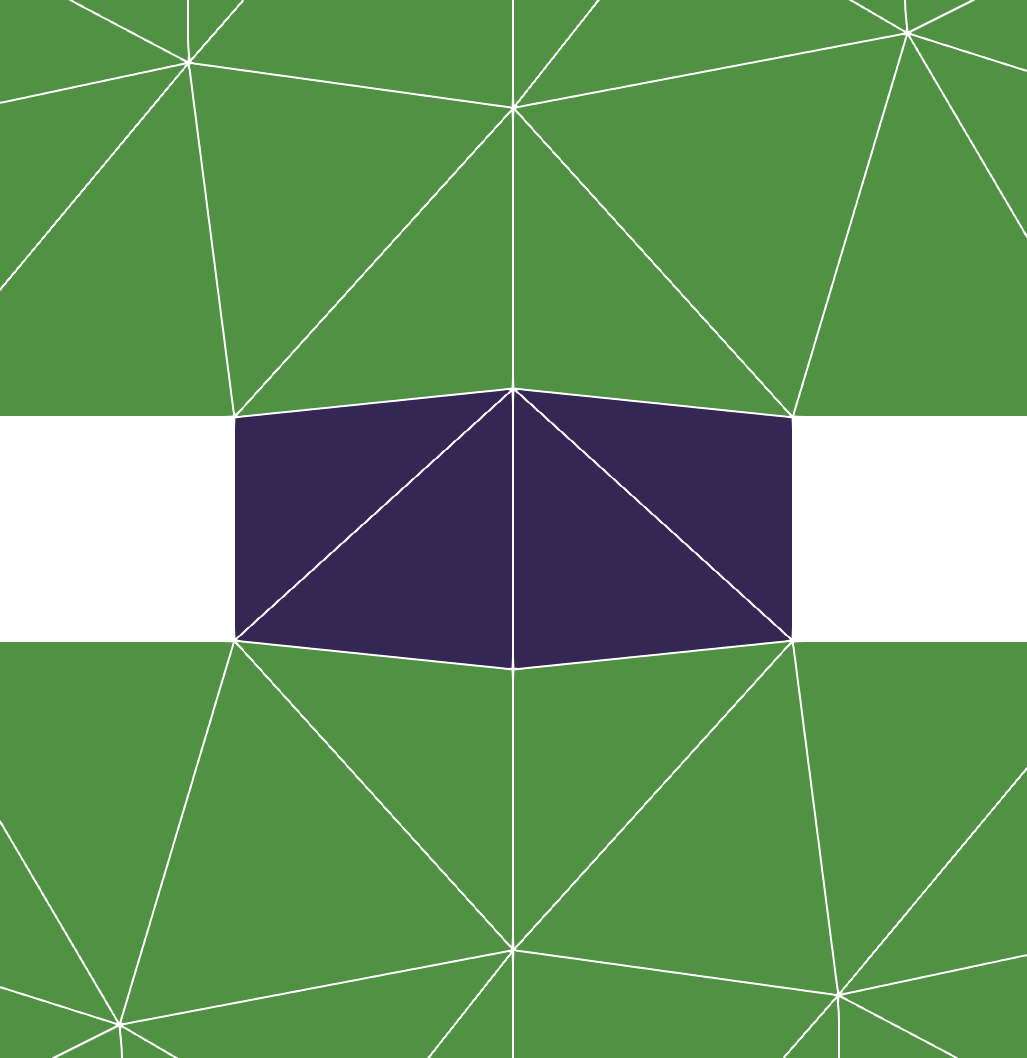}
			\caption{Zoom over the channel with $\delta=1/2^2$.}
			\label{fig:eff2_grida}
		\end{subfigure}\hfil%
		\begin{subfigure}[t]{\subfigsizemed\textwidth}
			\centering
			\includegraphics[trim=0cm 0cm 0cm 0cm, clip, width=0.64\textwidth]{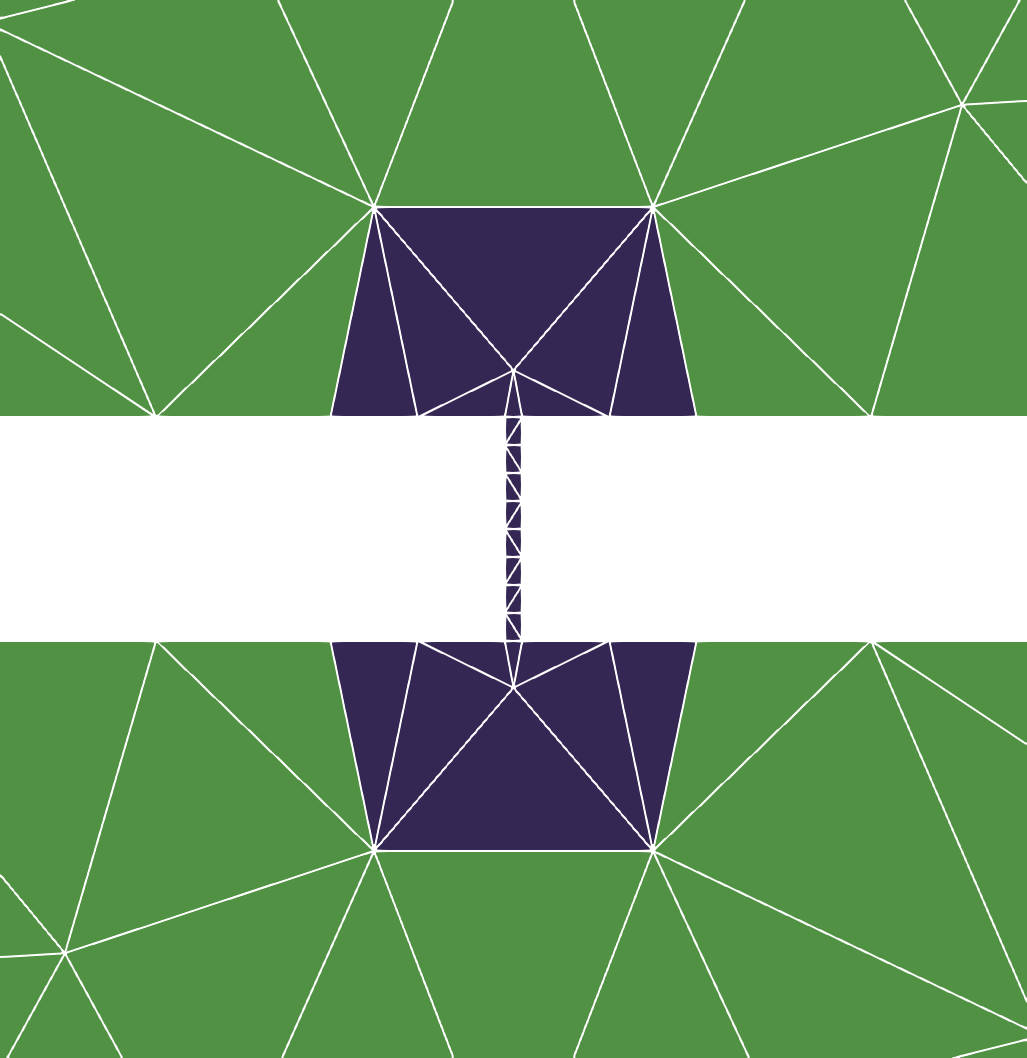}
			\caption{Zoom over the channel with $\delta=1/2^7$.}
			\label{fig:eff2_gridb}
		\end{subfigure}%
	\hfil%
	\caption{Narrow channel. Zoom of the FE mesh for a channel width $\delta=1/2^2$ or $1/2^7$, with the subdomain $\Omega_{F,\delta}$ (in blue).}
	\label{fig:eff2_grid}
\end{figure}

For varying channel width $\delta=1/2^{k}$, $k=0,\ldots,15$, we now solve \eqref{eq:par3} with the RKC and mRKC method using the choice of parameters \eqref{eq:defsmetaweak} with $\tau=0.01$. In \cref{fig:eff2_speed}, the relative speed-up defined as the ratio between the computational times of RKC and mRKC always exceeds one and reaches a value as high as 40. 
Note that the relative error between the two solutions in $L^2(\Omega_\delta)$ or $H^1(\Omega_\delta)$ norm is at most $3\cdot 10^{-4}$, as shown in \cref{fig:eff2_relerr}.

In \cref{fig:eff2_rho}, we display for varying $\delta$ also the spectral radii $\rho,\rhof,\rhos$ of $A,A_F,A_S$, respectively; note that $\rhof$ and $\rho$ essentially coincide. For large $\delta$, we also have $\rhof\approx\rhos$ since the typical element size is sufficiently small to resolve the channel (\cref{fig:eff2_grida}). For $\delta$ small, we observe that $\rho$, $\rhof$ increase as $1/\delta^2$ while $\rhos$ remains almost constant. \Cref{fig:eff2_stages} shows that the number $s$ of stages in the mRKC scheme remains constant, as does $\rhos$ in \cref{fig:eff2_rho}, while $m$ increases (as $\rhof$). 
For large $\delta$, we have $\rhof\approx\rhos$ and thus $m=1$; then, the RKC and mRKC schemes coincide. Indeed, as is shown in \cref{fig:eff2_relerr}, for $m=1$ then the relative error between the RKC and mRKC solutions is of the order of machine precision.

In \cref{fig:eff2_comptime}, we observe that for $\delta$ large the CPU times of the two methods are similar; thus, despite $\rhof\approx\rhos$, there is no loss in efficiency and the speed-up is at least one (\cref{fig:eff2_speed}). For moderate values of $\delta$, the cost of RKC increases proportionally to $1/\delta$, while the cost of mRKC is hardly affected. For even smaller $\delta$, the number of evaluations of $\ff$ increases and so does its cost with respect to $\fs$ (see \cref{fig:eff2_cf}), since the number of elements in $\mathcal{M}_F$ increases (\cref{fig:eff2_grid}). In this regime, evaluation of $\ff$ dominates the computational cost of mRKC, which increases linearly in $1/\delta$, too. Still, the mRKC method remains about forty times faster than the classical RKC method for this particular discretization inside $\Omega_\delta$, see \cref{fig:eff2_speed}. 

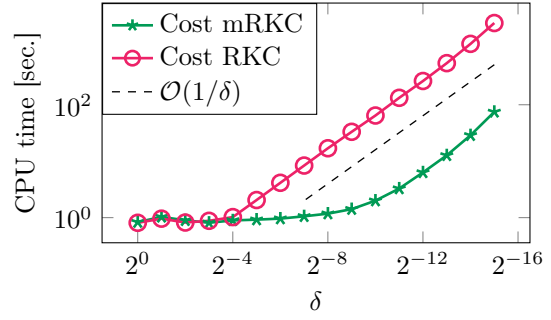
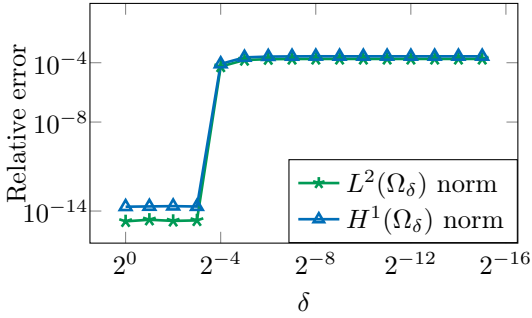
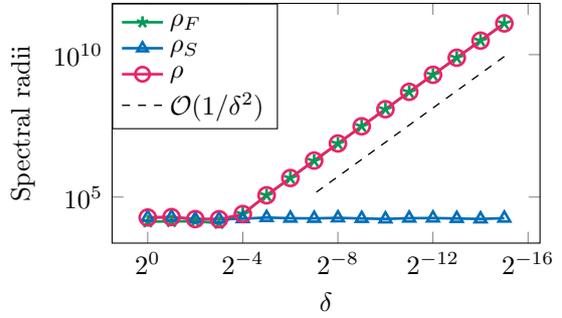
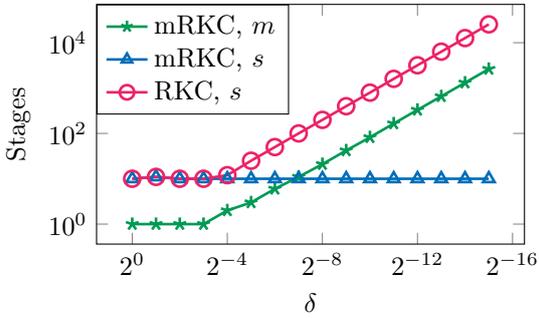
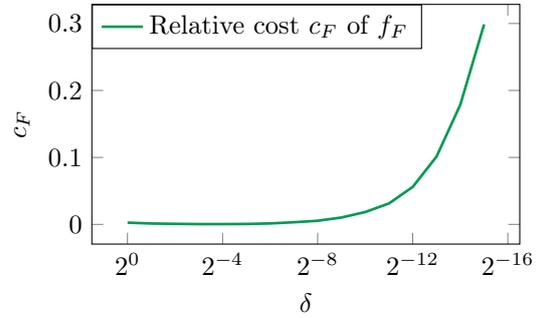
\begin{figure}
		\begin{subfigure}[t]{\subfigsize\textwidth}
			\centering
			\begin{tikzpicture}[scale=\plotimscale]
			\begin{semilogxaxis}[height=\aspectratio*\plotimsized\textwidth,width=\plotimsized\textwidth,legend columns=1,legend cell align={left},legend style={draw=\legendboxdraw,fill=\legendboxfill,at={(0,1)},anchor=north west},log basis x={2},x dir=reverse,ytick={1,10,20,30,40,50},ymax=50,
			xlabel={$\delta$}, ylabel={Speed-up},label style={font=\normalsize},tick label style={font=\normalsize},legend image post style={scale=\legendmarkscale},legend style={nodes={scale=\legendfontscale, transform shape}}]
			\addplot[color=\colorone,solid,line width=\plotlinewidth pt,mark=none,mark size=\plotmarksizeu pt] table [x=delta,y=speedup,col sep=comma] 
			{data/narrow_channel/results.csv};\addlegendentry{Speed-up}
			\addplot[black,dashed,line width=\plotdashedlinewidth pt,domain=3.0517578125e-05:1] (x,1);\addlegendentry{$1$}
			\end{semilogxaxis}
			\end{tikzpicture}
			\caption{Relative speed-up of mRKC over RKC.}
			\label{fig:eff2_speed}
		\end{subfigure}\hfill%
		\begin{subfigure}[t]{\subfigsize\textwidth}
			\centering
			\begin{tikzpicture}[scale=\plotimscale]
			\begin{loglogaxis}[height=\aspectratio*\plotimsized\textwidth,width=\plotimsized\textwidth,legend columns=1,legend cell align={left},legend style={draw=\legendboxdraw,fill=\legendboxfill,at={(0,1)},anchor=north west},log basis x={2},log basis y={10},x dir=reverse,ylabel style={yshift=-2pt},
			xlabel={$\delta$}, ylabel={CPU time [sec.]},label style={font=\normalsize},tick label style={font=\normalsize},legend image post style={scale=\legendmarkscale},legend style={nodes={scale=\legendfontscale, transform shape}}]
			\addplot[color=\localcolor,solid,line width=\plotlinewidth pt,mark=\localmark,mark size=\plotmarksizeu pt] table [x=delta,y=mRKC_cost,col sep=comma] 
			{data/narrow_channel/results.csv};\addlegendentry{Cost mRKC}
			\addplot[color=\classicalcolor,solid,line width=\plotlinewidth pt,mark=\classicalmark,mark size=\plotmarksized pt] table [x=delta,y=RKC_cost,col sep=comma] 
			{data/narrow_channel/results.csv};\addlegendentry{Cost RKC}
			\addplot[black,dashed,line width=\plotdashedlinewidth pt,domain=3.0517578125e-05:0.0078] (x,1/x/64);\addlegendentry{$\bigo{1/\delta}$}
			\end{loglogaxis}
			\end{tikzpicture}
			\caption{Total CPU time w.r.t. $\delta$.}
			\label{fig:eff2_comptime}
		\end{subfigure}\\ \vspace{0.5cm}%
		\begin{subfigure}[t]{\subfigsize\textwidth}
			\centering
			\begin{tikzpicture}[scale=\plotimscale]
			\begin{loglogaxis}[height=\aspectratio*\plotimsized\textwidth,width=\plotimsized\textwidth,legend columns=1,legend cell align={left},legend style={draw=\legendboxdraw,fill=\legendboxfill,at={(1,0)},anchor=south east},log basis x={2},log basis y={10},x dir=reverse, ytick={1e-14,1e-8,1e-4},ymax=1,ylabel style={yshift=-8pt},yticklabel style={xshift=3pt},
			xlabel={$\delta$}, ylabel={Relative error},label style={font=\normalsize},tick label style={font=\normalsize},legend image post style={scale=\legendmarkscale},legend style={nodes={scale=\legendfontscale, transform shape}}]
			\addplot[color=\colorone,solid,line width=\plotlinewidth pt,mark=\markone,mark size=\plotmarksizeu pt] table [x=delta,y=relerrl2,col sep=comma] 
			{data/narrow_channel/results.csv};\addlegendentry{$L^2(\Omega_\delta)$ norm}
			\addplot[color=\colortwo,solid,line width=\plotlinewidth pt,mark=\marktwo,mark size=\plotmarksizeu pt] table [x=delta,y=relerrh1,col sep=comma] 
			{data/narrow_channel/results.csv};\addlegendentry{$H^1(\Omega_\delta)$ norm }
			\end{loglogaxis}
			\end{tikzpicture}
			\caption{RKC and mRKC solutions' relative error $\Vert u^{\mRKCop}-u^{\RKCop}\Vert/\Vert u^{\RKCop}\Vert$.}
			\label{fig:eff2_relerr}
		\end{subfigure}\hfill%
		\begin{subfigure}[t]{\subfigsize\textwidth}
			\centering
			\begin{tikzpicture}[scale=\plotimscale]
			\begin{loglogaxis}[height=\aspectratio*\plotimsized\textwidth,width=\plotimsized\textwidth,legend columns=1,legend style={draw=\legendboxdraw,fill=\legendboxfill,at={(0,1)},anchor=north west},log basis x={2},log basis y={10},x dir=reverse,legend cell align={left},
			xlabel={$\delta$}, ylabel={Spectral radii},label style={font=\normalsize},tick label style={font=\normalsize},legend image post style={scale=\legendmarkscale},legend style={nodes={scale=\legendfontscale, transform shape}}]
			\addplot[color=\localcolorF,solid,line width=\plotlinewidth pt,mark=\localmarkF,mark size=\plotmarksizeu pt] table [x=delta,y=mRKC_rhoF,col sep=comma] 
			{data/narrow_channel/results.csv};\addlegendentry{$\rhof$}
			\addplot[color=\localcolorS,solid,line width=\plotlinewidth pt,mark=\localmarkS,mark size=\plotmarksizeu pt] table [x=delta,y=mRKC_rhoS,col sep=comma] 
			{data/narrow_channel/results.csv};\addlegendentry{$\rhos$}
			\addplot[color=\classicalcolor,solid,line width=\plotlinewidth pt,mark=\classicalmark,mark size=\plotmarksized pt] table [x=delta,y=RKC_rho,col sep=comma] 
			{data/narrow_channel/results.csv};\addlegendentry{$\rho$}
			\addplot[black,dashed,line width=\plotdashedlinewidth pt,domain=3.0517578125e-05:0.0078] (x,8/x/x);\addlegendentry{$\bigo{1/\delta^2}$}
			\end{loglogaxis}
			\end{tikzpicture}
			\caption{Spectral radii w.r.t. $\delta$.}
			\label{fig:eff2_rho}
		\end{subfigure}\\ \vspace{0.5cm}%
		\begin{subfigure}[t]{\subfigsize\textwidth}
			\centering
			\begin{tikzpicture}[scale=\plotimscale]
			\begin{loglogaxis}[height=\aspectratio*\plotimsized\textwidth,width=\plotimsized\textwidth,legend columns=1,legend style={draw=\legendboxdraw,fill=\legendboxfill,at={(0,1)},anchor=north west},log basis x={2},log basis y={10},x dir=reverse,legend cell align={left},legend cell align={left},
			xlabel={$\delta$}, ylabel={Stages},label style={font=\normalsize},tick label style={font=\normalsize},legend image post style={scale=\legendmarkscale},legend style={nodes={scale=\legendfontscale, transform shape}}]
			\addplot[color=\localcolorF,solid,line width=\plotlinewidth pt,mark=\localmarkF,mark size=\plotmarksizeu pt] table [x=delta,y=mRKC_m,col sep=comma] 
			{data/narrow_channel/results.csv};\addlegendentry{mRKC, $m$}
			\addplot[color=\localcolorS,solid,line width=\plotlinewidth pt,mark=\localmarkS,mark size=\plotmarksizeu pt] table [x=delta,y=mRKC_s,col sep=comma] 
			{data/narrow_channel/results.csv};\addlegendentry{mRKC, $s$}
			\addplot[color=\classicalcolor,solid,line width=\plotlinewidth pt,mark=\classicalmark,mark size=\plotmarksized pt] table [x=delta,y=RKC_s,col sep=comma] 
			{data/narrow_channel/results.csv};\addlegendentry{RKC, $s$}
			\end{loglogaxis}
			\end{tikzpicture}
			\caption{Number of stages needed by RKC and mRKC.}
			\label{fig:eff2_stages}
		\end{subfigure} \hfill%
		\begin{subfigure}[t]{\subfigsize\textwidth}
			\centering
			\begin{tikzpicture}[scale=\plotimscale]
			\begin{semilogxaxis}[height=\aspectratio*\plotimsized\textwidth,width=\plotimsized\textwidth,legend columns=1,legend cell align={left},legend style={draw=\legendboxdraw,fill=\legendboxfill,at={(0,1)},anchor=north west},log basis x={2},x dir=reverse,y tick label style={/pgf/number format/.cd,fixed,precision=2,/tikz/.cd},
			xlabel={$\delta$}, ylabel={$\costf$},label style={font=\normalsize},tick label style={font=\normalsize},legend image post style={scale=\legendmarkscale},legend style={nodes={scale=\legendfontscale, transform shape}}]
			\addplot[color=\colorone,solid,line width=\plotlinewidth pt,mark=none,mark size=\plotmarksizeu pt] table [x=delta,y=mRKC_cF,col sep=comma] 
			{data/narrow_channel/results.csv};\addlegendentry{Relative cost $\costf$ of $\ff$}
			\end{semilogxaxis}
			\end{tikzpicture}
			\caption{Relative evaluation cost of $\ff(y)$ w.r.t. $\ff+\fs$ as a function of $\delta$.}
			\label{fig:eff2_cf}
		\end{subfigure}%
	\caption{Narrow channel. Speed-up, error, spectral radii and stages number w.r.t. channel width $\delta$.}
	\label{fig:eff2_results}
\end{figure}
Finally, we compare mRKC against the second-order accurate version of the RKC scheme (RKC2) from \cite{SSV98,VHS90} and the implicit Euler (IE) method. To do so, we consider the two channel widths $\delta=1/2^6,1/2^{12}$ and solve \cref{eq:par3} with the RKC, RKC2, mRKC and IE schemes. For both values of $\delta$ we display in \cref{fig:WvsP} the computational times against the final error, with varying step size $\tau=T/2^j$, $j=0,\ldots,16$ and $T=0.1$. Since the exact solution is unknown, the final error is computed against a reference solution obtained from the second-order RKC2 scheme with step size $\tau=10^{-6}$. For the wider channel with $\delta=1/2^6$, we observe in \cref{fig:WvsPa} that the mRKC scheme is more efficient than RKC2 for most of the step sizes, but becomes less efficient at higher accuracy. The IE and the mRKC method are about equally efficient. In contrast, for the narrow channel with $\delta=1/2^{12}$, that is, in a situation of even more severe stiffness, we observe in \cref{fig:WvsPb} that mRKC is always much faster than RKC2: the speed-up ranges from ten to seventy times faster, depending on the step size imposed by the desired accuracy. In this case of extreme stiffness, the IE method always remains slightly faster than mRKC; clearly, the IE method is also particularly efficient here thanks to the symmetry and linearity of the discrete Laplacian.
\begin{figure}
		\begin{subfigure}[t]{\subfigsize\textwidth}
			\centering
			\begin{tikzpicture}[scale=\plotimscale]
				\begin{loglogaxis}[height=\aspectratio*\plotimsized\textwidth,width=\plotimsized\textwidth,legend columns=1,legend cell align={left},legend style={draw=\legendboxdraw,fill=\legendboxfill,at={(1,0)},anchor=south east},log basis x={10},log basis y={10},x dir=reverse, 
					ylabel style={yshift=0pt},yticklabel style={xshift=2pt},
					xlabel={Relative error in $L^2(\Omega_\delta)$ norm}, ylabel={CPU time [sec.]},label style={font=\normalsize},tick label style={font=\normalsize},legend image post style={scale=\legendmarkscale},legend style={nodes={scale=\legendfontscale, transform shape}}]
					\addplot[color=\classicalcolor,solid,line width=\plotlinewidth pt,mark=\classicalmark,mark size=\plotmarksized pt] table [x=relerrl2,y=rkc_step_inc_sub,col sep=comma] 
					{data/narrow_channel/RKC1_k_7.csv};\addlegendentry{RKC}
					\addplot[color=\colortwo,solid,line width=\plotlinewidth pt,mark=\marktwo,mark size=\plotmarksizeu pt] table [x=relerrl2,y=rkc_step_inc_sub,col sep=comma] 
					{data/narrow_channel/RKC2_k_7.csv};\addlegendentry{RKC2}
					\addplot[color=\localcolor,solid,line width=\plotlinewidth pt,mark=\localmark,mark size=\plotmarksizeu pt] table [x=relerrl2,y=rkc_step_inc_sub,col sep=comma] 
					{data/narrow_channel/mRKC_k_7.csv};\addlegendentry{mRKC}
					\addplot[color=\colorfour,solid,line width=\plotlinewidth pt,mark=\markfour,mark size=\plotmarksizeu pt] table [x=relerrl2,y=linsystemtot,col sep=comma] 
					{data/narrow_channel/IE_k_7.csv};\addlegendentry{IE}
				\end{loglogaxis}
			\end{tikzpicture}
			\caption{$\delta=1/2^6$}
			\label{fig:WvsPa}
		\end{subfigure}\hfill%
		\begin{subfigure}[t]{\subfigsize\textwidth}
			\centering
			\begin{tikzpicture}[scale=\plotimscale]
				\begin{loglogaxis}[height=\aspectratio*\plotimsized\textwidth,width=\plotimsized\textwidth,legend columns=1,legend cell align={left},legend style={draw=\legendboxdraw,fill=\legendboxfill,at={(1,0)},anchor=south east},log basis x={10},log basis y={10},x dir=reverse, 
					ylabel style={yshift=0pt},yticklabel style={xshift=2pt},
					xlabel={Relative error in $L^2(\Omega_\delta)$ norm}, ylabel={CPU time [sec.]},label style={font=\normalsize},tick label style={font=\normalsize},legend image post style={scale=\legendmarkscale},legend style={nodes={scale=\legendfontscale, transform shape}}]
					\addplot[color=\classicalcolor,solid,line width=\plotlinewidth pt,mark=\classicalmark,mark size=\plotmarksized pt] table [x=relerrl2,y=rkc_step_inc_sub,col sep=comma] 
					{data/narrow_channel/RKC1_k_13.csv};\addlegendentry{RKC}
					\addplot[color=\colortwo,solid,line width=\plotlinewidth pt,mark=\marktwo,mark size=\plotmarksizeu pt] table [x=relerrl2,y=rkc_step_inc_sub,col sep=comma] 
					{data/narrow_channel/RKC2_k_13.csv};\addlegendentry{RKC2}
					\addplot[color=\localcolor,solid,line width=\plotlinewidth pt,mark=\localmark,mark size=\plotmarksizeu pt] table [x=relerrl2,y=rkc_step_inc_sub,col sep=comma] 
					{data/narrow_channel/mRKC_k_13.csv};\addlegendentry{mRKC}
					\addplot[color=\colorfour,solid,line width=\plotlinewidth pt,mark=\markfour,mark size=\plotmarksizeu pt] table [x=relerrl2,y=linsystemtot,col sep=comma] 
					{data/narrow_channel/IE_k_13.csv};\addlegendentry{IE}
				\end{loglogaxis}
			\end{tikzpicture}
			\caption{$\delta=1/2^{12}$}
			\label{fig:WvsPb}
		\end{subfigure}%
	\caption{Narrow channel. Work vs. accuracy diagram of the RKC, RKC2, mRKC and implicit Euler (IE) scheme for two channel widths $\delta$. Symbols represent step sizes $\tau=T/2^j$, for $j=0,\ldots,16$.}
	\label{fig:WvsP}
\end{figure}
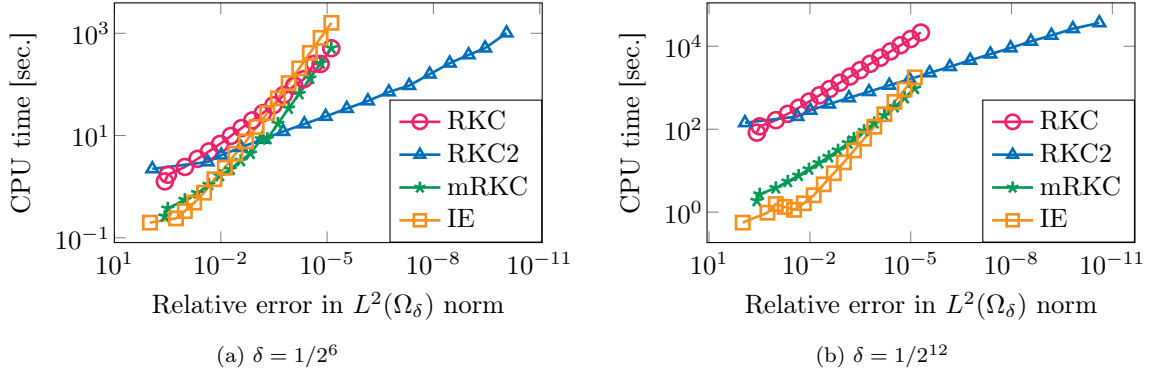

\subsection{Integro-differential equation}\label{sec:intdiff}
To compare the mRKC and implicit Euler scheme on a nonsymmetric and truly nonlinear problem, we now consider the one-dimensional integro-differential problem from \cite{VaV92}, also considered in \cite{AbV13,Zbi11}:
\begin{equation}\label{eq:intdiff}
\begin{aligned}
	\dt u(t,x)  &=\Delta u(t,x)-\sigma  \int_0^1 \frac{u(t,s)^4}{(1+|x-s|)^2}\dif s \qquad && (t,x)\in [0,1]\times [0,1],\\
	u(0,x)&= \cos(x\pi/2)^2 && x\in [0,1],\\
	u(t,0) &= 1-\sqrt{t}/2 && t\in[0,1],\\
	\partial_x u(t,1) &= 0 && t\in[0,1],
\end{aligned}
\end{equation}
with $\sigma=0.01$. Problem \cref{eq:intdiff} models an idealized temperature profile of air near the ground. We discretize \cref{eq:intdiff} in space on a uniform grid of $N$ cells using central finite differences  for the Laplacian and the composite trapezoidal rule for the integral term. For the mRKC scheme, we assign the Laplacian to $\ff$ and the integral term to $\fs$. For the implicit Euler method, the Jacobian (evaluated analytically) and its LU decomposition are computed only once per time step following \cite[IV.8]{HaW02}.

For $N=100, 3200$ (varying problem size and degree of stiffness), we apply the RKC, RKC2, mRKC and IE schemes for $\tau=1/2^{j}$ with $j=2,\ldots,12$. For each run, we monitor the computational time and the $L^2([0,1])$ error against a reference solution: the resulting efficiency graphs are shown in \cref{fig:eff_intdiff}.  For $N=100$, IE method is always faster than RKC, which remains the most expensive method. For large $\tau$, IE is comparable to mRKC, but as $\tau$ decreases, the mRKC scheme becomes significantly more efficient. The second-order scheme RKC2 becomes faster than mRKC only at high accuracy. For the stiffer case with $N=3200$, the mRKC scheme is the fastest method, as its efficiency is only marginally affected by the increased stiffness, while the cost of the direct solver in the Newton iteration clearly starts to dominate the overall cost of IE. For those parameter settings, the RKC and RKC2 methods were overly expensive and could not be run to completion.
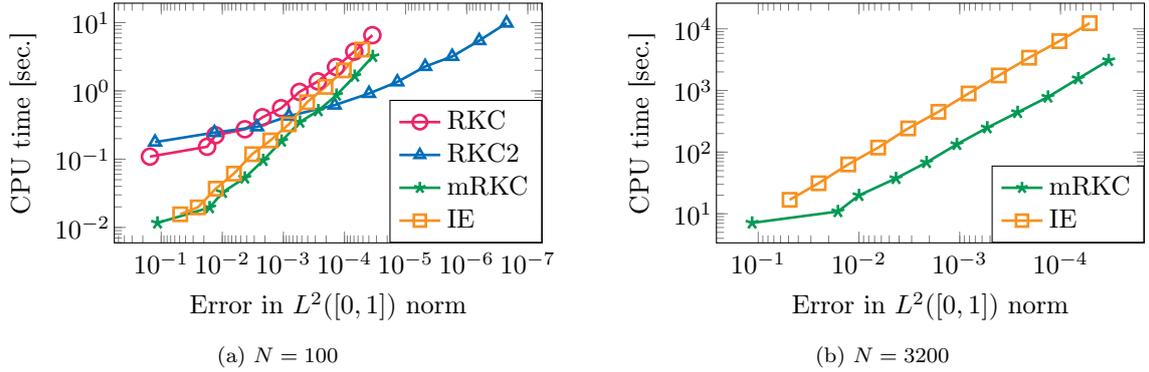
\begin{figure}
	\begin{subfigure}[t]{\subfigsize\textwidth}
		\centering
		\begin{tikzpicture}[scale=\plotimscale]
			\begin{loglogaxis}[height=\aspectratio*\plotimsized\textwidth,width=\plotimsized\textwidth,legend columns=1,legend cell align={left},legend style={draw=\legendboxdraw,fill=\legendboxfill,at={(1,0)},anchor=south east},log basis x={10},log basis y={10},x dir=reverse, 
				ylabel style={yshift=0pt},yticklabel style={xshift=2pt},
				xlabel={Error in $L^2([0,1])$ norm}, ylabel={CPU time [sec.]},label style={font=\normalsize},tick label style={font=\normalsize},legend image post style={scale=\legendmarkscale},legend style={nodes={scale=\legendfontscale, transform shape}}]
				\addplot[color=\classicalcolor,solid,line width=\plotlinewidth pt,mark=\classicalmark,mark size=\plotmarksized pt] table [x=err,y=cpu,col sep=comma] 
				{data/integrodifferential/RKC1_N_100.csv};\addlegendentry{RKC}
				\addplot[color=\colortwo,solid,line width=\plotlinewidth pt,mark=\marktwo,mark size=\plotmarksizeu pt] table [x=err,y=cpu,col sep=comma] 
				{data/integrodifferential/RKC2_N_100.csv};\addlegendentry{RKC2}
				\addplot[color=\localcolor,solid,line width=\plotlinewidth pt,mark=\localmark,mark size=\plotmarksizeu pt] table [x=err,y=cpu,col sep=comma] 
				{data/integrodifferential/mRKC_PAR_N_100.csv};\addlegendentry{mRKC}
				\addplot[color=\colorfour,solid,line width=\plotlinewidth pt,mark=\markfour,mark size=\plotmarksizeu pt] table [x=err,y=cpu,col sep=comma] 
				{data/integrodifferential/IE_LU_N_100.csv};\addlegendentry{IE}
			\end{loglogaxis}
		\end{tikzpicture}
		\caption{$N=100$}
		\label{fig:eff_intdiffa}
	\end{subfigure}\hfill%
	\begin{subfigure}[t]{\subfigsize\textwidth}
		\centering
		\begin{tikzpicture}[scale=\plotimscale]
			\begin{loglogaxis}[height=\aspectratio*\plotimsized\textwidth,width=\plotimsized\textwidth,legend columns=1,legend cell align={left},legend style={draw=\legendboxdraw,fill=\legendboxfill,at={(1,0)},anchor=south east},log basis x={10},log basis y={10},x dir=reverse, 
				ylabel style={yshift=0pt},yticklabel style={xshift=2pt},
				xlabel={Error in $L^2([0,1])$ norm}, ylabel={CPU time [sec.]},label style={font=\normalsize},tick label style={font=\normalsize},legend image post style={scale=\legendmarkscale},legend style={nodes={scale=\legendfontscale, transform shape}}]
				\addplot[color=\localcolor,solid,line width=\plotlinewidth pt,mark=\localmark,mark size=\plotmarksizeu pt] table [x=err,y=cpu,col sep=comma] {data/integrodifferential/mRKC_PAR_N_3200.csv};\addlegendentry{mRKC}
				\addplot[color=\colorfour,solid,line width=\plotlinewidth pt,mark=\markfour,mark size=\plotmarksizeu pt] table [x=err,y=cpu,col sep=comma] {data/integrodifferential/IE_LU_N_3200.csv};\addlegendentry{IE}
			\end{loglogaxis}
		\end{tikzpicture}
		\caption{$N=3200$}
		\label{fig:eff_intdiffb}
	\end{subfigure}%
	\caption{Integro-differential problem. Work vs. accuracy diagram of the RKC, RKC2, mRKC and implicit Euler (IE) scheme for two mesh sizes $1/N$. Symbols represent step sizes $\tau=1/2^j$, for $j=2,\ldots,12$.}
	\label{fig:eff_intdiff}
\end{figure}

\subsection{Reaction-convection-diffusion problem}\label{sec:exp_stab}
In \cref{sec:stabanalysis} we proved that the stability conditions of the mRKC method are the same for the $2\times 2$ model problem \eqref{eq:twodim} and for the scalar multirate test equation \eqref{eq:mtesteq}. The splitting of the discrete Laplace operator in \eqref{eq:splitpar} in fact is similar to that in \eqref{eq:tauAfAc1} for the $2\times 2$ model problem. Thus, one could expect that the stability conditions \eqref{eq:defsmeta} are also necessary for more general parabolic problems. However, spatial discretizations of parabolic problems are much more complex than \eqref{eq:twodim}. Here we shall demonstrate via numerical experiment that the weaker stability conditions \eqref{eq:defsmetaweak} in fact are also necessary and sufficient for general parabolic reaction-convection-diffusion problems, such as
\begin{equation}\label{eq:parmod}
\begin{aligned}
\dt u-\nabla\cdot(K\nabla u)+\bm{\beta}\cdot\nabla u +\mu u &=g \qquad &&\text{in }\Omega\times [0,T],\\
u&=0 &&\text{in } \partial\Omega \times [0,T],\\
u&=u_0 &&\text{in } \Omega\times\{0\}.
\end{aligned}
\end{equation}
These experiments also illustrate that the mRKC method indeed requires no scale separation.

We now consider three distinct parameter regimes. First, we let $\Omega=[0,2]\times [0,1]$, $K=I_{2\times 2}$, $\bm{\beta}=\bm{0}$ and $\mu=0$. Inside $\Omega$, we build a $16\times 8$ uniform mesh and refine twice the elements inside of $\OF=(1,2)\times (0,1)$ (see \cref{fig:stab_test_dom}). Again, we use DG-FE 
for the spatial discretization, which yields the two matrices $A_F$ and $A_S$, as described in \cref{sec:exp_conv2}. Next, we set $\tau=1$, $s,m,\eta$ as in \eqref{eq:defsmeta} and $A_\eta$ as in \eqref{eq:defAeta}. One step of the mRKC scheme is given by $y_1=\Pe_s(\tau A_\eta)y_0$. We recall that a necessary condition for stability of the scheme (at least for linear problems) is $\tau \rho_\eta\leq \beta s^2$, where $\rhoe$ is the spectral radius of $A_\eta$.

Let $\overline\beta$ be as in \eqref{eq:defsmetaweak}, $\overline\eta\in [0,\eta]$, $\overline m$ such that $\overline\eta\rhof\leq\overline\beta\overline m^2$,
\begin{equation}
\overline A_\eta=\Ps_{\overline m}(\overline \eta A_F)A
\end{equation}
and $\overline \rho_\eta$ be the spectral radius of $\overline A_\eta$. We wish to study for which $\overline\eta$ it holds $\tau\overline{\rho}_\eta\leq\beta s^2$. In \cref{fig:rho0}, we display $\tau \overline\rho_\eta$ for $\overline\eta\in (0,\eta)$ with respect to $w(\overline{\eta})=-\overline\eta\beta s^2/\tau$: for $|w(\overline \eta)|\geq 2$, it holds $\tau\overline\rho_\eta\leq\beta s^2$ and thus the scheme is stable. Observe that $|w(\overline \eta)|\geq 2$ is equivalent to $\overline \eta \geq 2\tau/(\beta s^2)$, as in \eqref{eq:defsmetaweak}.
\begin{figure}
		\begin{subfigure}[t]{\subfigsize\textwidth}
			\centering
			\begin{tikzpicture}
			\node at (0,0) {\includegraphics[width=0.96\textwidth]{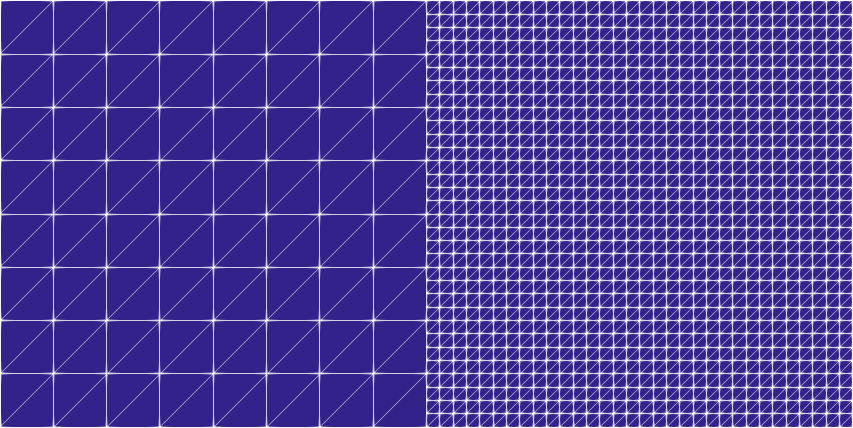}};
			\draw[white,fill=white,opacity=0.8] (1.76,0.) circle (0.4cm);
			\draw[white,fill=white,opacity=0.8] (-1.76,0.) circle (0.4cm);
			\node at (1.79,0.) {{\Large $\Omega_F$}};
			\node at (-1.73,0.) {{\Large $\Omega_S$}};
			\draw[white,fill=white,opacity=0.0] (0,-2.4) circle (0.4cm);
			\end{tikzpicture}
			\caption{Domains $\Omega_S$ and $\Omega_F$}
			\label{fig:stab_test_dom}
		\end{subfigure}\hfill%
		\begin{subfigure}[t]{\subfigsize\textwidth}
			\centering
			\begin{tikzpicture}[scale=\plotimscale]
			\begin{semilogyaxis}[height=\aspectratio*\plotimsized\textwidth,width=\plotimsized\textwidth,legend columns=1,legend style={draw=\legendboxdraw,fill=\legendboxfill,at={(0,1)},anchor=north west},log basis y={2},scaled x ticks=false,
			xtick={-6,-4,-2,0}, xticklabels={$w(\eta)$, $-4$, $-2$, $0$}, ymax=4850,
			xlabel={$w(\overline\eta)$},label style={font=\normalsize},tick label style={font=\normalsize},legend image post style={scale=\legendmarkscale},legend style={nodes={scale=\legendfontscale, transform shape}}]
			\addplot[color=\colorone,solid,line width=\plotlinewidth pt,mark=none,mark size=\plotmarksizeu pt] table [x=weta,y=rho,col sep=comma] 
			{data/num_stab/rho_0.csv};\addlegendentry{$\tau \overline \rho_\eta$}
			\addplot[color=black,dashed,line width=\plotdashedlinewidth pt,mark=none,mark size=\plotmarksizeu pt] table [x=weta,y=bssq, col sep=comma] 
			{data/num_stab/rho_0.csv};\addlegendentry{$\beta s^2$}
			\end{semilogyaxis}
			\end{tikzpicture}
			\caption{$\tau\overline\rho_\eta$ vs. $w(\overline\eta)$, first problem.}
			\label{fig:rho0}
		\end{subfigure}%
	\caption{Reaction-convection-diffusion problem. Mesh and spectral radius $\tau \overline{\rho}_\eta$ of the first problem setting.}
	\label{fig:stab_test}
\end{figure}
Since the smallest (in magnitude) nonzero eigenvalues of the discrete Laplacians, $A_F$ and $A_S$,
do not depend on the mesh size, but only depends on the size of the domain, 
they essentially coincide; hence, this problem exhibits no scale separation assumption. Nevertheless
the mRKC scheme remains stable, as expected from theory.

Finally, we consider two additional cases that further corroborate the previous findings. First, we set $\Omega=[0,1]\times [0,1]$, $K=I_{2\times 2}$, $\bm{\beta}=(1,1)^\top$ and $\mu=1$.  In  $\Omega$,
we build a $8\times 8$ uniform mesh and refine three times the elements inside the small inner square $\OF=(1/4,3/4)\times (1/4,3/4)$. In \cref{fig:rho1}, we show again $\tau \overline\rho_\eta$ for $\overline\eta\in (0,\eta)$ with respect to $w(\overline\eta)$: for $|w(\overline \eta)|\geq 2$, $\tau\overline\rho_\eta\leq\beta s^2$ holds. Next, we use a uniform $32\times 32$ mesh in $\Omega=[0,1]\times [0,1]$ which is refined twice in the lower left corner $\OF=(0,1/32)\times (0,1/32)$. We also set $\bm{\beta}=0$, $\mu=0$, $K(\bx)=1$ for $x_1\geq x_2$ and $K(\bx)=0.1$ elsewhere. The results, shown in \cref{fig:rho2}, again confirm the stability of the mRKC with parameters chosen according to \eqref{eq:defsmetaweak}.
\begin{figure}
		\begin{subfigure}[t]{\subfigsize\textwidth}
			\centering
			\begin{tikzpicture}[scale=\plotimscale]
			\begin{semilogyaxis}[height=\aspectratio*\plotimsized\textwidth,width=\plotimsized\textwidth,legend columns=1,legend style={draw=\legendboxdraw,fill=\legendboxfill,at={(0,1)},anchor=north west},log basis y={2},ymax=32768,scaled x ticks=false,
			xtick={-6,-4,-2,0}, xticklabels={$w(\eta)$, $-4$, $-2$, $0$}, 
			xlabel={$w(\overline\eta)$},label style={font=\normalsize},tick label style={font=\normalsize},legend image post style={scale=\legendmarkscale},legend style={nodes={scale=\legendfontscale, transform shape}}]
			\addplot[color=\colorone,solid,line width=\plotlinewidth pt,mark=none,mark size=\plotmarksizeu pt] table [x=weta,y=rho,col sep=comma] 
			{data/num_stab/rho_1.csv};\addlegendentry{$\tau \overline \rho_\eta$}
			\addplot[color=black,dashed,line width=\plotdashedlinewidth pt,mark=none,mark size=\plotmarksizeu pt] table [x=weta,y=bssq, col sep=comma] 
			{data/num_stab/rho_1.csv};\addlegendentry{$\beta s^2$}
			\end{semilogyaxis}
			\end{tikzpicture}
			\caption{$\tau\overline\rho_\eta$ vs. $w(\overline\eta)$, second problem.}
			\label{fig:rho1}
		\end{subfigure}\hfill%
		\begin{subfigure}[t]{\subfigsize\textwidth}
			\centering
			\begin{tikzpicture}[scale=\plotimscale]
			\begin{semilogyaxis}[height=\aspectratio*\plotimsized\textwidth,width=\plotimsized\textwidth,legend columns=1,legend style={draw=\legendboxdraw,fill=\legendboxfill,at={(0,1)},anchor=north west},log basis y={2},ymax=524288,scaled x ticks=false,
			xtick={-6,-4,-2,0}, xticklabels={$w(\eta)$, $-4$, $-2$, $0$}, 
			xlabel={$w(\overline\eta)$},label style={font=\normalsize},tick label style={font=\normalsize},legend image post style={scale=\legendmarkscale},legend style={nodes={scale=\legendfontscale, transform shape}}]
			\addplot[color=\colorone,solid,line width=\plotlinewidth pt,mark=none,mark size=\plotmarksizeu pt] table [x=weta,y=rho,col sep=comma] 
			{data/num_stab/rho_2.csv};\addlegendentry{$\tau \overline \rho_\eta$}
			\addplot[color=black,dashed,line width=\plotdashedlinewidth pt,mark=none,mark size=\plotmarksizeu pt] table [x=weta,y=bssq, col sep=comma] 
			{data/num_stab/rho_2.csv};\addlegendentry{$\beta s^2$}
			\end{semilogyaxis}
			\end{tikzpicture}
			\caption{$\tau\overline\rho_\eta$ vs. $w(\overline\eta)$, third problem.}
			\label{fig:rho2}
		\end{subfigure}%
	\caption{Stability experiment. Illustration of $\tau\overline\rho_\eta$ versus $w(\overline\eta)$, second and third problem setting.}
	\label{fig:num_stab}
\end{figure}
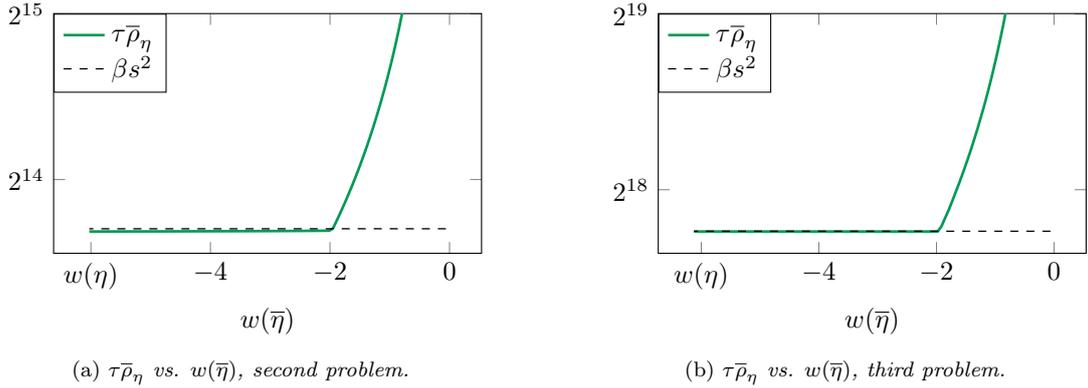

\section{Conclusion}
Starting from the stiff differential equation $y'=\fs(y) + \ff(y)$, where $\ff$ represents a few severely stiff, but cheap, ``fast'' components, we have proposed a modified equation \cref{eq:odemod} whose stiffness no longer
depends on $\ff$.  It involves an averaged force $\fe$ of $f = \fs+\ff$, evaluated by solving the stiff, but cheap, auxiliary problem \cref{eq:defu} over short time and thus forms the basis of the following multirate strategy:
Solve \cref{eq:odemod} with an explicit numerical method whose stability conditions are determined by the mildly stiff, but expensive, ``slow'' components $\fs$,
while solving \cref{eq:defu} with a separate explicit method whose stability conditions are determined by $\ff$, whenever an evaluation of $\fe$ is needed.
In Theorems \ref{thm:fecontr} and \ref{thm:erryeta}, we have proved that the modified equation \eqref{eq:odemod} approximates the original problem to first-order accuracy while preserving its contractivity properties. The stability analysis of the multirate test equation \cref{eq:modtesteq} underpins the reduced stiffness of the modified equation, which no longer depends on the fastest components $\ff$ for $\eta$ sufficiently large -- see \Cref{thm:stifflinscamod}.

By discretizing \eqref{eq:odemod} with an $s$-stage (explicit) Runge-Kutta-Chebyshev (RKC) method while 
evaluating $\fe$ with one step of a separate $m$-stage RKC method, we have devised a new
multirate RKC method. The resulting mRKC method, given by \eqref{eq:defsmeta}--\eqref{eq:defbue},
is fully explicit, stable, and first-order accurate, as proved in \cref{thm:stab_mrkc,thm:firstorder}, without
the need for interpolation or extrapolation of missing stage values. Thanks to the reduced stiffness in  \eqref{eq:odemod}, the number of expensive $\fs$ evaluations is greatly reduced and independent of
the severe stiffness induced by just a few degrees of freedom in $\ff$, without any assumption about scale separation.

For semi-discrete parabolic problems, where $\fs$ and $\ff$ correspond to discretized diffusion operators
in the coarse and locally refined regions of the mesh, respectively,
the mRKC method permits to overcome the crippling effect on explicit time integrators
due to a few tiny elements or grid cells. 
In particular, for diffusion dominated problems in complex geometry, the mRKC method is up to forty times faster than a standard first-order RKC method; it is also up to seventy times cheaper than a second-order RKC2 \cite{SSV98,VHS90} method for moderately high error tolerances.
Thus, the  mRKC method recovers 
the well-known efficiency of RKC methods for large-scale, possibly nonlinear, parabolic problems
without sacrificing explicitness, even in the presence of local mesh refinement. When compared to the implicit Euler method, the mRKC scheme's performance depends on the degree of stiffness, problem size and nonlinearity. In all our numerical experiments, mRKC performed similarly, or even better, than IE, without the need for solving any linear systems. Moreover, our numerical experiments suggest that with increasing problem size, the efficiency of IE rapidly decreases, whereas mRKC remains only marginally affected.

The multirate strategy introduced here also paves the way for higher order extensions and for developing explicit stabilized multirate methods for stiff stochastic differential equations \cite{AbR22b}.

\section*{Acknowledgments}
This research is partially supported by the Swiss National Science Foundation, grant no. 20020\_172710. The second author thanks the EPFL for the opportunity to perform this research there during his sabbatical leave. 

\appendix
\section{Proofs of lemmas} \label{app:proofs}

In this section we prove \cref{lemma:disc_stab,lemma:ddRincr}, needed in the proof of \cref{thm:stab_mrkc}.

\begin{proof}[Proof of \cref{lemma:disc_stab}.]
	For the only if part we follow the lines of the proof of \cref{lemma:cont_stab} and find that $\Ps_m'(0)|w|\geq 1$ is a necessary condition. The identity $\Ps_m'(0)=P_m''(0)/2$ follows from the definition of $\Ps_m(z)$ in \cref{eq:PhimTfrac}.
	
	Now, let us suppose $2/P_m''(0)\leq |w|$ and show $\beta_\varepsilon(z)=\Ps_m(z)(z+w)\geq w$, where $\varepsilon$ is the damping. For $z=0$ it is clear, independently of $\varepsilon$. We will show $\beta_0(z)>w$ for all $z<0$, since $\beta_\varepsilon(z)$ depends continuously on $\varepsilon$ there exists $\varepsilon_m>0$ such that $\beta_\varepsilon(z)\geq w$ for all $\varepsilon\leq \varepsilon_m$. We have
	\begin{align}
	\beta_0'(z)&=\frac{P_m'(z)}{z}(z+w)-\frac{P_m(z)-1}{z^2}w,\\
	\beta_0''(z)&=\frac{P_m''(z)}{z}(z+w)-2\frac{P_m'(z)}{z^2}w+2\frac{P_m(z)-1}{z^3}w
	\end{align}
	and since $\beta_0(z)$ is a polynomial of degree $m$ then $\beta_0'(z)$ has at most $m-1$ zeros. We are going to locate the zeros $z_{m-1}<\dots<z_3<z_2$ of $\beta_0'(z)$. Then we will use the fact that $\beta_0'(z)$ has at most one zero on the right of $z_2$. In order to help the understanding of the proof we plot $\beta_0'(z)$ in \cref{fig:db} for two values of $m$.
	\begin{figure}
			\begin{subfigure}[t]{\subfigsize\textwidth}
				\centering
				\begin{tikzpicture}[scale=\plotimscale]
				\begin{axis}[height=\aspectratio*\plotimsized\textwidth,width=\plotimsized\textwidth,ymin=-1.1,ymax=1.1,legend columns=1,legend style={draw=\legendboxdraw,fill=\legendboxfill,at={(0,1)},anchor=north west},
				xlabel={$z$}, ylabel={},label style={font=\normalsize},tick label style={font=\normalsize},legend image post style={scale=\legendmarkscale},legend style={nodes={scale=\legendfontscale, transform shape}},grid=major]
				\addplot[color=\colorone,line width=\plotlinewidth pt,mark=none] table [x=z,y=db,col sep=comma] 
				{data/text/db_m_8.csv};\addlegendentry{$\beta_0'(z)$ for $m=8$.}
				\addplot[color=\colorthree,line width=\plotlinewidth pt,only marks,mark=\markthree,mark size=\plotmarksizeu pt] table [x=z,y=dbz,col sep=comma] 
				{data/text/db_m_8_zeros.csv};
				\draw (-12,0.15) node [fill=white,circle,inner sep=0pt,minimum size=1pt]{$z_2$};
				\draw (-57,0.15) node [fill=white,circle,inner sep=0pt,minimum size=1pt]{$z_4$};
				\draw (-102.25,0.15) node [fill=white,circle,inner sep=0pt,minimum size=1pt]{$z_6$};
				\end{axis}
				\end{tikzpicture}
			\end{subfigure}\hfill%
			\begin{subfigure}[t]{\subfigsize\textwidth}
				\centering
				\begin{tikzpicture}[scale=\plotimscale]
				\begin{axis}[height=\aspectratio*\plotimsized\textwidth,width=\plotimsized\textwidth,ymin=-1.1,ymax=1.1,legend columns=1,legend style={draw=\legendboxdraw,fill=\legendboxfill,at={(1,1)},anchor=north east},
				xlabel={$z$}, ylabel={},label style={font=\normalsize},tick label style={font=\normalsize},legend image post style={scale=\legendmarkscale},legend style={nodes={scale=\legendfontscale, transform shape}},grid=major]
				\addplot[color=\colorone,line width=\plotlinewidth pt,mark=none] table [x=z,y=db,col sep=comma] 
				{data/text/db_m_9.csv};\addlegendentry{$\beta_0'(z)$ for $m=9$.}
				\addplot[color=\colorthree,line width=\plotlinewidth pt,only marks,mark=\markthree,mark size=\plotmarksizeu pt] table [x=z,y=dbz,col sep=comma] 
				{data/text/db_m_9_zeros.csv};
				\draw (-12,0.15) node [fill=white,circle,inner sep=0pt,minimum size=1pt]{$z_2$};
				\draw (-60,0.15) node [fill=white,circle,inner sep=0pt,minimum size=1pt]{$z_4$};
				\draw (-114.5,0.15) node [fill=white,circle,inner sep=0pt,minimum size=1pt]{$z_6$};
				\draw (-150,0.15) node [fill=white,circle,inner sep=0pt,minimum size=1pt]{$z_8$};
				\end{axis}
				\end{tikzpicture}
			\end{subfigure}%
		\caption{Plot of $\beta_0'(z)$ for $m=8,9$ and $|w|=2/P_m''(0)$.}
		\label{fig:db}
	\end{figure}
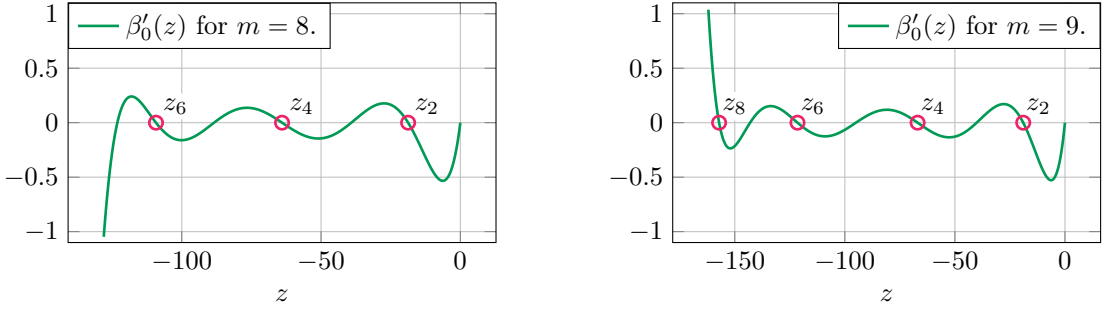
	
	Since $P_m(z)=T_m(1+z/m^2)$ and $T_m(\cos(\theta))=\cos(m\theta)$, choosing $z_{2k}$ such that
	\begin{align}
	1+\frac{z_{2k}}{m^2}&= \cos(\theta_{2k}) & \mbox{with} && \theta_{2k}&=\frac{2k\pi}{m}
	\end{align}
	it yields
	\begin{equation}
	\beta_0'(z_{2k})=0  \quad\mbox{for}\quad  2k=2,4,\ldots,2\lfloor\frac{m-1}{2}\rfloor.
	\end{equation}
	Since $z_{2k}$ is a local maximum of $P_m(z)$ then $P_m''(z_{2k})<0$ and $\beta_0''(z_{2k})<0$. In a neighborhood of $z_{2k}$ we have
	\begin{equation}
	\beta_0'(z)=\beta_0''(z_{2k})(z-z_{2k})+\bigo{(z-z_{2k})^2},
	\end{equation}
	hence for $\delta>0$ small and $2k=2,4,\ldots,2(\lfloor\frac{m-1}{2}\rfloor-1)$ we have $\beta_0'(z_{2k+2}+\delta)<0$ and $\beta_0'(z_{2k}-\delta)>0$, implying that there exists $z_{2k+1}\in [z_{2k+2},z_{2k}]$ such that $\beta_0'(z_{2k+1})=0$. If $m$ is odd then $2\lfloor\frac{m-1}{2}\rfloor=m-1$ and we located the zeros $z_j$ for $j=2,3,\ldots,m-1$. If $m$ is even then $2\lfloor\frac{m-1}{2}\rfloor=m-2$, but $P_m(-2m^2)=1$ and $P_m'(-2m^2)=-1$ and hence $\beta_0'(-2m^2)<0$. Thus, since $\beta_0'(z_{m-2}-\delta)>0$ there exists $z_{m-1}\in ]-2m^2,z_{m-2}[$ such that $\beta_0'(z_{m-1})=0$. Finally, we located $z_j$ for $j=2,3,\ldots,m-1$ for $m$ even and odd. We will show $\beta_0(z)>w$ for $z\in [z_2,0[$ and then for $z\in[-2m^2,z_2]$.
	
	Let $z\in [z_2,0[$, if $z=z_1$ then $\beta_0'(z)=0$ and else $\beta_0'(z)<0$. Indeed, for $z$ close to zero we have
	\begin{equation}
	\beta_0'(z)=1+\frac{1}{2}P_m''(0)w+ (P_m''(0)+\frac{1}{3}P_m'''(0)w)z+\bigo{z^2}. 
	\end{equation}
	If $2/P_m''(0)<|w|$ then $1+\frac{1}{2}P_m''(0)w<0$ and $\beta_0'(z)<0$ in the neighborhood of zero. If $2/P_m''(0)=|w|$ then 
	\begin{equation}
	\beta_0'(z)=\left(P_m''(0)-\frac{2}{3}\frac{P_m'''(0)}{P_m''(0)}\right)z+\bigo{z^2} = \frac{m^2+1}{5m^2}z+\bigo{z^2},
	\end{equation}
	and $\beta_0'(z)<0$ in the neighborhood of zero as well. If there exists $\bar z\in ]z_2,0[$ such that $\beta_0'(\bar z)>0$ we can take $\delta>0$ small enough to have $\bar z\in ]z_2+\delta,-\delta[$ and $\beta_0'(z_2+\delta)<0$ and $\beta_0'(-\delta)<0$. Hence, $\beta_0'$ would change sign twice in the interval $]z_2+\delta,-\delta[$, which is impossible since $\beta_0'$ has at most one zero on the right of $z_2$. Hence, $\beta_0'(z)<0$ for all $z\in [z_2,0[$ except at most one point, since $\beta_0(0)=w$ it follows $\beta_0(z)>w$ for all $z\in [z_2,0[$. 
	
	We consider now $z\leq z_2$. Using $1-\cos(\theta)=2\sin(\theta/2)^2$ it holds $z_2=-2m^2\sin(\pi/m)$ and 
	\begin{align}
	\beta_0(z)&=\Ps_m(z)(z+w)=\frac{P_m(z)-1}{z}(z+w)\geq -2\frac{z+w}{z}\geq -2-2\frac{w}{z}\geq w P_m''(0)-2\frac{w}{z_2}\\
	&= \left(\frac{m^2-1}{3 m^2}+\frac{1}{m^2\sin(\pi/m)^2}\right)w>  \left(\frac{1}{3}+\frac{1}{m^2\sin(\pi/m)^2}\right)w.
	\end{align}
	Thus, if $m^2\sin(\pi/m)^2\geq 3/2$ then $\beta_0(z)>w$. For $m=2$ it is clearly true. We let $g(x)=x^2\sin(\pi/x)^2$ and show that $g(x)$ is strictly increasing in $x\in [2,\infty[$, which implies $m^2\sin(\pi/m)^2\geq 3/2$ for all $m\geq 2$. We have
	\begin{equation}
	g'(x) = 2\sin(\pi/x)(x\sin(\pi/x)-\pi\cos(\pi/x))\geq 0
	\end{equation}
	if and only if $x\sin(\pi/x)-\pi\cos(\pi/x)\geq 0$, which is equivalent to $\tan(\pi/x)\geq \pi/x$. The latter holds true since $\tan(\theta)\geq \theta$ for $\theta\in [0,\pi/2]$. 
\end{proof}

The next lemma has been used in the proof of \cref{thm:stab_mrkc}.
\begin{lemma}\label{lemma:ddRincr}
$\Pi_m''(0)=T_m(\vz)T_m''(\vz)/T_m'(\vz)^2$ is increasing for $\vz\geq 1$.
\end{lemma}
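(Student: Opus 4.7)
The plan is to reformulate $\Pi_m''(0)$ as (one minus) a derivative and reduce monotonicity of $\Pi_m''(0)$ to the concavity of a much simpler rational function of $v_0$, which I can then establish via partial fractions and Cauchy--Schwarz.

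First, I would set $\phi(v_0) := T_m(v_0)/T_m'(v_0)$, which is well-defined and smooth on $(1,\infty)$ since $T_m'(v_0)>0$ there. Differentiating gives
\begin{equation}
\phi'(v_0) \;=\; 1 - \frac{T_m(v_0)\, T_m''(v_0)}{T_m'(v_0)^2} \;=\; 1-\Pi_m''(0).
\end{equation}
Hence the claim ``$\Pi_m''(0)$ non-decreasing in $v_0$'' is equivalent to ``$\phi$ is concave on $[1,\infty)$,'' i.e. $\phi''(v_0)\le 0$ for $v_0>1$ (the boundary value at $v_0=1$ following by continuity).

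Next, I would exploit the partial fraction expansion
\begin{equation}
s(v_0) \;:=\; \frac{T_m'(v_0)}{T_m(v_0)} \;=\; \sum_{k=1}^m \frac{1}{v_0-x_k},
\end{equation}
where $x_1,\dots,x_m\in(-1,1)$ are the roots of $T_m$. Because $v_0>1>x_k$, every summand is positive, and term-by-term differentiation yields
\begin{equation}
s(v_0)>0,\qquad s'(v_0)=-\sum_{k=1}^m\frac{1}{(v_0-x_k)^2}<0,\qquad s''(v_0)=2\sum_{k=1}^m\frac{1}{(v_0-x_k)^3}>0.
\end{equation}
Since $\phi=1/s$, a direct computation gives $\phi''=\bigl(2(s')^2-s\,s''\bigr)/s^3$, so concavity of $\phi$ reduces to the pointwise inequality $2(s'(v_0))^2\le s(v_0)\,s''(v_0)$.

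Finally, this is exactly where Cauchy--Schwarz enters: applying it to $a_k=(v_0-x_k)^{-1/2}$ and $b_k=(v_0-x_k)^{-3/2}$ gives
\begin{equation}
\bigl(s'(v_0)\bigr)^2 \;=\; \Bigl(\sum_k \tfrac{1}{(v_0-x_k)^2}\Bigr)^{\!2} \;\le\; \Bigl(\sum_k \tfrac{1}{v_0-x_k}\Bigr)\Bigl(\sum_k \tfrac{1}{(v_0-x_k)^3}\Bigr) \;=\; s(v_0)\,\tfrac{s''(v_0)}{2},
\end{equation}
i.e. $2(s')^2\le s\,s''$, which completes the proof. The main conceptual obstacle is spotting the reformulation $\Pi_m''(0)=1-\phi'(v_0)$; once that is in hand, everything else is a short positivity argument combined with a one-line Cauchy--Schwarz, and no delicate case analysis on $m$ or on the sign of individual Chebyshev-derivative terms is needed.
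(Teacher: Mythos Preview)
Your proof is correct and substantially cleaner than the paper's. The key identity $\Pi_m''(0)=1-\phi'(v_0)$ with $\phi=T_m/T_m'$ reduces the question to concavity of $\phi$, and then the logarithmic-derivative expansion $s=T_m'/T_m=\sum_k(v_0-x_k)^{-1}$ together with Cauchy--Schwarz gives $2(s')^2\le s\,s''$, hence $\phi''\le 0$. All steps check out; note also that Cauchy--Schwarz is strict for $m\ge 2$ since the roots $x_k$ are distinct, so you actually get strict monotonicity in that case.

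The paper takes a completely different route: it argues by contradiction, comparing the two stability polynomials $\Pi_m(z,v_0)$ and $\Pi_m(z,v_0+\delta)$ for small $\delta>0$, and counts how many times they must cross on the interval containing the extrema of $\Pi_m(\cdot,v_0)$. If $\Pi_m''(0,\cdot)$ decreased, an extra crossing near $z=0$ would force the two degree-$m$ polynomials to agree at too many points (counting the double contact at $z=0$ from the first-order conditions), yielding a contradiction. This is a zero-counting/equioscillation argument tied to the Chebyshev structure of the stability polynomial, and it requires a separate technical step to ensure $\delta$ is small enough that the relevant extrema stay inside the comparison interval. Your argument is more elementary, purely analytic, and entirely avoids this case analysis; the paper's argument, by contrast, stays closer to the stability-polynomial picture that drives the rest of the article.
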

\begin{proof}
	Let $\Pi_m(z,\vz)\coloneqq T_m(\vz)^{-1}T_m(\vz+\vu z)$ be the stability polynomial, with the dependency on $\vz$ made explicit (recall that $\vu$ depends on $\vz$, too). We will show that for every $\vz\geq 1$ there exists $\delta_0=\delta_0(\vz)>0$ such that
	\begin{equation}\label{eq:increasing}
		\Pi_m''(0,\vz)\leq \Pi_m''(0,\vz+\delta)\qquad \forall\,0\leq\delta\leq\delta_0,
	\end{equation}
where the derivative is with respect to the first variable. Indeed, if $\Pi_m''(0,\vz)$ was not increasing then \cref{eq:increasing} would be violated for some $\vz$.

Recall that $\Pi_m(z,\vz)$ satisfies $\Pi_m(0,\vz)=\Pi_m'(0,\vz)=1$ and $\Pi_m(z_i,\vz)=(-1)^iT_m(\vz)^{-1}$ for $i=1,\ldots,m-1$ and some $z_{m-1}<\cdots<z_1<0$, due to the oscillatory behavior of the Chebyshev polynomial $T_m(\vz+\vu z)$.  

Let $\Pi_m(z,\overline\vz)$ be another stability polynomial with higher damping $\overline{\vz}=\vz+\delta$, where $\delta>0$ (\cref{eq:increasing} is obviously satisfied for $\delta=0$). We denote by $\overline z_i$ the extrema of $\Pi_m(z,\overline{\vz})$.
Since $\Pi_m(z,\overline\vz)$ has higher damping than $\Pi_m(z,\vz)$, it has a shorter stability domain. However, by taking $\delta$ small enough we can ensure that $|\Pi_m(z,\overline\vz)|$ is bounded for every $z>z_{m-1}$, with $z_{m-1}$ the last extremum of $\Pi_m(z,\vz)$. Therefore, for the time being, we may assume that there exists $\delta_0(\vz)>0$ such that for $\delta\leq\delta_0(\vz)$ it holds $|\Pi_m(z,\overline\vz)|\leq T_m(\overline\vz)^{-1}$ for all $z\in (z_{m-1},z_1)$.

In every interval $(z_{i+1},z_i)$, for $i=1,\ldots,m-2$, $\Pi_m(z,\vz)$ crosses $\Pi_m(z,\overline\vz)$ at least once. Indeed, since $T_m(\vz)$ is strictly increasing for $\vz\geq 1$, $|\Pi_m(z,\overline\vz)|\leq T_m(\overline\vz)^{-1}<T_m(\vz)^{-1}$ and $\Pi_m(z,\vz)$ takes all values in $(-T_m(\vz)^{-1},T_m(\vz)^{-1})$.

Now we proceed by contradiction and assume that $\Pi_m''(0,\overline\vz)<\Pi_m''(0,\vz)$. Then, for all $z\in[-\epsilon,0)$, for some small $\epsilon>0$,
\begin{equation}\label{eq:ineqP}
	\Pi_m(z,\overline\vz) = 1+z+\frac{1}{2}\Pi_m''(0,\overline\vz)z^2+\bigo{z^3}<1+z+\frac{1}{2}\Pi_m''(0,\vz)z^2+\bigo{z^3}=\Pi_m(z,\vz).
\end{equation}
Since $\Pi_m(z_1,\vz)=-T_m(\vz)^{-1}<-T_m(\overline\vz)^{-1}\leq\Pi_m(z_1,\overline\vz)$, \cref{eq:ineqP} implies that $\Pi_m(z,\vz)$ crosses $\Pi_m(z,\overline\vz)$ in the interval $(z_1,-\epsilon)$. Hence, $\Pi_m(z,\vz)$ and $\Pi_m(z,\overline\vz)$ intersect $m-1$ times in $(z_{m-1},-\epsilon)$. From the first-order conditions we also have $\Pi_m(0,\vz)=\Pi_m(0,\overline\vz)$, $\Pi_m'(0,\vz)=\Pi_m'(0,\overline\vz)$, and hence $\Pi_m(z,\vz)=\Pi_m(z,\overline\vz)$ for all $z$, which leads to the contradiction.

It remains to prove the existence of $\delta_0(\vz)>0$ such that, for $\delta\leq \delta_0(\vz)$, $|\Pi_m(z,\overline\vz)|\leq T_m(\overline\vz)^{-1}$ for all $z\in (z_{m-1},z_1)$. For $z\in(-(1+\overline\vz)/\overline\vu,\overline z_1)$ it holds $-1\leq\overline\vz+\overline\vu z<1$ and thus $|\Pi_m(z,\overline\vz)|\leq T_m(\overline\vz)^{-1}$. Indeed, $\Pi_m(z,\overline\vz)$ reaches its first minimum at $\overline z_1$ and then oscillates until the end of the stability domain. We need to prove $ (z_{m-1},z_1)\subset (-(1+\overline\vz)/\overline\vu,\overline z_1)$. The upper bound $z_1<\overline z_1$ is implied by the higher damping of $\Pi_m(z,\overline\vz)$, which ``compresses'' the stability polynomial towards the origin. For the lower bound, we use the identity $T_m(\cos(\theta))=\cos(m\theta)$, which implies $\vz+\vu z_{m-1}=\cos((m-1)\pi/m)$. Thus, $-(1+\overline\vz)/\overline\vu\leq z_{m-1}$ if 
\begin{equation}\label{eq:conddeltaz}
	(\overline\vz+1)\vu \geq (\vz-\cos((m-1)\pi/m))\overline\vu.
\end{equation}
If $\delta=0$, hence $\vz=\overline\vz$ and $\vu=\overline\vu$, relation \cref{eq:conddeltaz} holds with strict inequality. Therefore, by continuity, there exists $\delta(\vz)>0$ such that \cref{eq:conddeltaz} holds for all $0\leq \delta\leq\delta_0(\vz)$.

\end{proof}


\begin{thebibliography}{10}
\expandafter\ifx\csname url\endcsname\relax
  \def\url#1{\texttt{#1}}\fi
\expandafter\ifx\csname doi\endcsname\relax
  \def\doi#1{\burlalt{doi:#1}{http://dx.doi.org/#1}}\fi
\expandafter\ifx\csname urlprefix\endcsname\relax\def\urlprefix{URL }\fi
\expandafter\ifx\csname href\endcsname\relax
  \def\href#1#2{#2}\fi
\expandafter\ifx\csname burlalt\endcsname\relax
  \def\burlalt#1#2{\href{#2}{#1}}\fi

\bibitem{Abd02}
A.~Abdulle.
\newblock {Fourth order Chebyshev methods with recurrence relation}.
\newblock {\em SIAM J. Sci. Comput.}, 23(6):2041--2054, 2002.

\bibitem{AbM01}
A.~Abdulle and A.~A. Medovikov.
\newblock {Second order Chebyshev methods based on orthogonal polynomials}.
\newblock {\em Numer. Math.}, 18:1--18, 2001.

\bibitem{AbP12}
A.~Abdulle and G.~A. Pavliotis.
\newblock {Numerical methods for stochastic partial differential equations with
  multiple scales}.
\newblock {\em J. Comput. Phys.}, 231(6):2482--2497, 2012.

\bibitem{AbR22b}
A.~Abdulle and G.~{Rosilho de Souza}.
\newblock {Explicit stabilized multirate method for stiff stochastic
  differential equations}.
\newblock {\em (in Press) SIAM J. Sci. Comput.}, \burlalt{arXiv:2010.15193
  [math.NA]}{http://arxiv.org/abs/2010.15193}.

\bibitem{AbR20a}
A.~Abdulle and G.~{Rosilho de Souza}.
\newblock {Instabilities and order reduction phenomenon of an interpolation
  based multirate Runge--Kutta--Chebyshev method}.
\newblock {\em Tech. Report, EPFL}, 2020, \burlalt{arXiv:2003.03154
  [math.NA]}{http://arxiv.org/abs/2003.03154}.

\bibitem{AbV13}
A.~Abdulle and G.~Vilmart.
\newblock {PIROCK: A swiss-knife partitioned implicit-explicit orthogonal
  Runge-Kutta Chebyshev integrator for stiff diffusion-advection-reaction
  problems with or without noise}.
\newblock {\em J. Comput. Phys.}, 242:869--888, 2013.

\bibitem{And79}
J.~F. Andrus.
\newblock {Numerical solution of systems of ordinary differential equations
  into subsytems}.
\newblock {\em SIAM J. Numer. Anal.}, 16(4):605--611, 1979.

\bibitem{BeO84}
M.~J. Berger and J.~Oliger.
\newblock {Adaptive mesh refinement for hyperbolic partial differential
  equations}.
\newblock {\em J. Comput. Phys.}, 53(3):484--512, 1984.

\bibitem{DQD91}
C.~N. Dawson, D.~Qiang, and T.~F. Dupont.
\newblock {A finite difference domain decomposition algorithm for numerical
  solution of the heat equation}.
\newblock {\em Math. Comput.}, 57(195):63--71, 1991.

\bibitem{PiE12}
D.~A. {Di Pietro} and A.~Ern.
\newblock {\em {Mathematical aspects of discontinuous Galerkin methods}},
  volume~69 of {\em Math{\'{e}}matiques et Applications}.
\newblock Springer, Berlin and Heidelberg, 2012.

\bibitem{DDD13}
T.~Dumont, M.~Duarte, S.~Descombes, M.~A. Dronne, M.~Massot, and V.~Louvet.
\newblock {Simulation of human ischemic stroke in realistic 3D geometry}.
\newblock {\em Commun. Nonlinear Sci. Numer. Simul.}, 18(6):1539--1557, 2013.

\bibitem{E03}
W.~E.
\newblock {Analysis of the heterogeneous multiscale method for ordinary
  differential equations}.
\newblock {\em Commun. Math. Sci.}, 1(3):423--436, 2003.

\bibitem{EnT05}
B.~Engquist and Y.~Tsai.
\newblock {Heterogeneous multiscale methods for stiff ordinary differential
  equations}.
\newblock {\em Math. Comput.}, 74(252):1707--1743, 2005.

\bibitem{EnL97}
C.~Engstler and C.~Lubich.
\newblock {Multirate extrapolation methods for differential equations with
  different time scales}.
\newblock {\em Computing}, 58(2):173--185, 1997.

\bibitem{EHL75}
W.~H. Enright, T.~E. Hull, and B.~Lindberg.
\newblock {Comparing numerical methods for stiff systems of O.D.E:s}.
\newblock {\em BIT Numer. Math.}, 15(1):10--48, 1975.

\bibitem{ELV94}
R.~E. Ewing, R.~D. Lazarov, and A.~Vassilev.
\newblock {Finite difference scheme for parabolic problems on composite grids
  with refinement in time and space}.
\newblock {\em SIAM J. Numer. Anal.}, 31(6):1605--1622, 1994.

\bibitem{ELV90}
R.~E. Ewing, R.~D. Lazarov, and P.~S. Vassilevski.
\newblock {Finite difference schemes on grids with local refinement in time and
  space for parabolic problems I. Derivation, stability, and error analysis}.
\newblock {\em Computing}, 45(3):193--215, 1990.

\bibitem{GaH13}
M.~J. Gander and L.~Halpern.
\newblock {Techniques for locally adaptive time stepping developed over the
  last two decades}.
\newblock {\em Lect. Notes Comput. Sci. Eng.}, 91(1):377--385, 2013.

\bibitem{GIK03}
C.~W. Gear, G.~Ioannis, and G.~Kevrekidis.
\newblock {Projective methods for stiff differential equations: problems with
  gaps in their eigenvalue spectrum}.
\newblock {\em SIAM J. Sci. Comput.}, 24(4):1091--1106, 2003.

\bibitem{GeW84}
C.~W. Gear and D.~R. Wells.
\newblock {Multirate linear multistep methods}.
\newblock {\em BIT Numer. Math.}, 24(4):484--502, 1984.

\bibitem{GMM15}
M.~J. Grote, M.~Mehlin, and T.~Mitkova.
\newblock {Runge--Kutta-based explicit local time-stepping methods for wave
  propagation}.
\newblock {\em SIAM J. Sci. Comput.}, 37(2):A747--A775, 2015.

\bibitem{GuB10}
G.~Guennebaud and B.~Jacob.
\newblock {Eigen v3}, 2010.
\newblock \urlprefix\url{http://eigen.tuxfamily.org/}.

\bibitem{GuL60}
A.~Guillou and B.~Lago.
\newblock {Domaine de stabilit{\'{e}} associ{\'{e}} aux formules
  d'int{\'{e}}gration num{\'{e}}rique d'{\'{e}}quations diff{\'{e}}rentielles,
  {\`{a}} pas s{\'{e}}par{\'{e}}s et {\`{a}} pas li{\'{e}}s. Recherche de
  formules {\`{a}} grand rayon de stabilit{\'{e}}.}
\newblock In {\em 1er Congr. Ass. Fran. Calc. AFCAL}, pages 43--56, Grenoble,
  1960.

\bibitem{GKR01}
M.~G{\"{u}}nther, A.~Kv{\ae}rn{\o}, and P.~Rentrop.
\newblock {Multirate partitioned Runge--Kutta methods}.
\newblock {\em BIT Numer. Math.}, 41(3):504--514, 2001.

\bibitem{GuR93}
M.~G{\"{u}}nther and P.~Rentrop.
\newblock {Multirate ROW methods and latency of electric circuits}.
\newblock {\em Appl. Numer. Math.}, 13(1-3):83--102, 1993.

\bibitem{GuS16}
M.~G{\"{u}}nther and A.~Sandu.
\newblock {Multirate generalized additive Runge Kutta methods}.
\newblock {\em Numer. Math.}, 133(3):497--524, 2016.

\bibitem{HLW06}
E.~Hairer, C.~Lubich, and G.~Wanner.
\newblock {\em {Geometric numerical integration}}, volume~31 of {\em Springer
  Series in Computational Mathematics}.
\newblock Springer, Heidelberg, 2006.

\bibitem{HNW08}
E.~Hairer, S.~P. N{\"{o}}rsett, and G.~Wanner.
\newblock {\em {Solving ordinary differential equations I}}, volume~8 of {\em
  Springer Series in Computational Mathematics}.
\newblock Springer-Verlag, Berlin, 2008.

\bibitem{HaW02}
E.~Hairer and G.~Wanner.
\newblock {\em {Solving ordinary differential equations II}}, volume~14 of {\em
  Springer Series in Computational Mathematics}.
\newblock Springer-Verlag, Berlin, 2002.

\bibitem{HoL99}
M.~Hochbruck and C.~Lubich.
\newblock {A Gautschi-type method for oscillatory second-order differential
  equations}.
\newblock {\em Numer. Math.}, 83:403--426, 1999.

\bibitem{HoO10}
M.~Hochbruck and A.~Ostermann.
\newblock {Exponential integrators}.
\newblock {\em Acta Numer.}, 19:209--286, 2010.

\bibitem{Hof76}
E.~Hofer.
\newblock {A partially implicit method for large stiff systems of ODEs with
  only few equations introducing small time-constants}.
\newblock {\em SIAM J. Numer. Anal.}, 13(5):645--663, 1976.

\bibitem{KPS06}
B.~S. Kirk, J.~W. Peterson, R.~H. Stogner, and G.~F. Carey.
\newblock {libMesh : a C++ library for parallel adaptive mesh
  refinement/coarsening simulations}.
\newblock {\em Eng. Comput.}, 22(3-4):237--254, 2006.

\bibitem{KnW98}
O.~Knoth and R.~Wolke.
\newblock {Implicit-explicit Runge-Kutta methods for computing atmospheric
  reactive flows}.
\newblock {\em Appl. Numer. Math.}, 28(2-4):327--341, 1998.

\bibitem{Kva99}
A.~Kv{\ae}rn{\o}.
\newblock {Stability of multirate Runge--Kutta schemes}.
\newblock In {\em Proc. 10th Coll. Differ. Equations}, volume~1A, pages
  97--105, 1999.

\bibitem{Leb94}
V.~I. Lebedev.
\newblock {How to solve stiff systems of differential equations by explicit
  methods}.
\newblock In {\em Numer. methods Appl.}, pages 45--80. CRC, Boca Raton, FL,
  1994.

\bibitem{LeM94}
V.~I. Lebedev and A.~A. Medovikov.
\newblock {Explicit methods of second order for the solution of stiff systems
  of ODEs}.
\newblock {\em Russ. Acad. Sci.}, 1994.

\bibitem{Lin72}
B.~Lindberg.
\newblock {IMPEX: a program package for solution of systems of stiff
  differential equations}.
\newblock Technical report, Dept. of Information Processing, Royal Inst. of
  Tech., Stockholm, 1972.

\bibitem{Med98}
A.~A. Medovikov.
\newblock {High order explicit methods for parabolic equations}.
\newblock {\em BIT Numer. Math.}, 38(2):372--390, 1998.

\bibitem{MAM06}
R.~Minero, M.~J.~H. Anthonissen, and R.~M.~M. Mattheij.
\newblock {A local defect correction technique for time-dependent problems}.
\newblock {\em Numer. Methods Partial Differ. Equ.}, 22(1):128--144, 2006.

\bibitem{Mir17}
T.~Mirzakhanian.
\newblock {\em {Multi-rate Runge--Kutta--Chebyshev time stepping for parabolic
  equations on adaptively refined meshes}}.
\newblock Master thesis, Boise State University, 2017.
\newblock \doi{10.18122/B2V715}.

\bibitem{Ric60}
J.~R. Rice.
\newblock {Split Runge--Kutta method for simultaneous equations}.
\newblock {\em J. Res. Natl. Bur. Stand. Sect. B, Math. Math. Phys.},
  64B(3):151--170, 1960.

\bibitem{RLS21}
S.~Roberts, J.~Loffeld, A.~Sarshar, C.~S. Woodward, and A.~Sandu.
\newblock {Implicit multirate GARK methods}.
\newblock {\em J. Sci. Comput.}, 87(4), 2021.
\newblock \doi{10.1007/s10915-020-01400-z}.

\bibitem{RSS20}
S.~Roberts, A.~Sarshar, and A.~Sandu.
\newblock {Coupled Multirate Infinitesimal GARK Schemes for Stiff Systems with
  Multiple Scales}.
\newblock {\em SIAM J. Sci. Comput.}, 42(3):A1609--A1638, 2020.

\bibitem{Ros20}
G.~{Rosilho De Souza}.
\newblock {\em {Numerical methods for deterministic and stochastic differential
  equations with multiple scales and high contrasts}}.
\newblock PhD thesis, EPFL, Lausanne, 2020.
\newblock \doi{10.5075/epfl-thesis-7445}.

\bibitem{San19}
A.~Sandu.
\newblock {A class of multirate infinitesimal GARK methods}.
\newblock {\em SIAM J. Numer. Anal.}, 57(5):2300--2327, 2019.

\bibitem{SaG15}
A.~Sandu and M.~G{\"{u}}nther.
\newblock {A generalized-structure approach to additive Runge-Kutta methods}.
\newblock {\em SIAM J. Numer. Anal.}, 53(1):17--42, 2015.

\bibitem{SRS19}
A.~Sarshar, S.~Roberts, and A.~Sandu.
\newblock {Design of high-order decoupled multirate GARK schemes}.
\newblock {\em SIAM J. Sci. Comput.}, 41(2):A816--A847, 2019.

\bibitem{SHV07}
V.~Savcenco, W.~Hundsdorfer, and J.~Verwer.
\newblock {A multirate time stepping strategy for stiff ordinary differential
  equations}.
\newblock {\em BIT Numer. Math.}, 47(1):137--155, 2007.

\bibitem{SaM10}
V.~Savcenco and R.~M.~M. Mattheij.
\newblock {Multirate numerical integration for stiff ODEs}.
\newblock In {\em Prog. Ind. Math. ECMI 2008}, volume~15, pages 327--332.
  Springer, Heidelberg, 2010.

\bibitem{SeR19}
J.~M. Sexton and D.~R. Reynolds.
\newblock {Relaxed Multirate Infinitesimal Step Methods for Initial-Value
  Problems}.
\newblock {\em Preprint}, 2019, \burlalt{arXiv:1808.03718
  [math.NA]}{http://arxiv.org/abs/1808.03718}.

\bibitem{ShV00}
G.~I. Shishkin and P.~N. Vabishchevich.
\newblock {Interpolation finite difference schemes on grids locally refined in
  time}.
\newblock {\em Comput. Methods Appl. Mech. Eng.}, 190(8-10):889--901, 2000.

\bibitem{SkA89}
S.~Skelboe and P.~U. Andersen.
\newblock {Stability properties of backward Euler multirate formulas}.
\newblock {\em SIAM J. Sci. Stat. Comput.}, 10(5):1000--1009, 1989.

\bibitem{SSV98}
B.~P. Sommeijer, L.~Shampine, and J.~G. Verwer.
\newblock {RKC: An explicit solver for parabolic PDEs}.
\newblock {\em J. Comput. Appl. Math.}, 88(2):315--326, 1998.

\bibitem{TrV91}
R.~Trompert and J.~Verwer.
\newblock {A static-regridding method for two-dimensional parabolic partial
  differential equations}.
\newblock {\em Appl. Numer. Math.}, 8(1):65--90, 1991.

\bibitem{TrV93a}
R.~Trompert and J.~Verwer.
\newblock {Analysis of local uniform grid refinement}.
\newblock {\em Appl. Numer. Math.}, 13(1-3):251--270, 1993.

\bibitem{TrV93b}
R.~Trompert and J.~Verwer.
\newblock {Analysis of the implicit Euler local uniform grid refinement
  method}.
\newblock {\em SIAM J. Sci. Comput.}, 14(2):259--278, 1993.

\bibitem{TrV93c}
R.~Trompert and J.~Verwer.
\newblock {Runge--Kutta methods and local uniform grid refinement}.
\newblock {\em Math. Comput.}, 60(202):591--616, 1993.

\bibitem{HoS80}
P.~J. {Van der Houwen} and B.~P. Sommeijer.
\newblock {On the internal stability of explicit, $m$-stage Runge--Kutta
  methods for large $m$-values}.
\newblock {\em Z. Angew. Math. Mech.}, 60(10):479--485, 1980.

\bibitem{Van03}
E.~Vanden-Eijnden.
\newblock {Numerical techniques for multi-scale dynamical systems with
  stochastic effects}.
\newblock {\em Commun. Math. Sci.}, 1(2):385--391, 2003.

\bibitem{VaV92}
A.~S. {Vasudeva Murthy} and J.~G. Verwer.
\newblock {Solving parabolic integro-differential equations by an explicit
  integration method}.
\newblock {\em J. Comput. Appl. Math.}, 39(1):121--132, 1992.

\bibitem{Ver80}
J.~G. Verwer.
\newblock {An implementation of a class of stabilized explicit methods for the
  time integration of parabolic equations}.
\newblock {\em ACM Trans. Math. Softw.}, 6(2):188--205, 1980.

\bibitem{Ver96}
J.~G. Verwer.
\newblock {Explicit Runge--Kutta methods for parabolic partial differential
  equations}.
\newblock {\em Appl. Numer. Math.}, 22(1-3):359--379, 1996.

\bibitem{VHS90}
J.~G. Verwer, W.~Hundsdorfer, and B.~P. Sommeijer.
\newblock {Convergence properties of the Runge--Kutta--Chebyshev method}.
\newblock {\em Numer. Math.}, 57(1):157--178, 1990.

\bibitem{WKG09}
J.~Wensch, O.~Knoth, and A.~Galant.
\newblock {Multirate infinitesimal step methods for atmospheric flow
  simulation}.
\newblock {\em BIT Numer. Math.}, 49(2):449--473, 2009.

\bibitem{Zbi11}
C.~J. Zbinden.
\newblock {Partitioned Runge-Kutta-Chebyshev methods for
  diffusion-advection-reaction problems}.
\newblock {\em SIAM J. Sci. Comput.}, 33(4):1707--1725, 2011.

\end{thebibliography}

\end{document}